\newcommand{\ncL}{\tilde{\cL}}
\newcommand{\nscL}{\tilde{\scL}}
\newcommand{\cleq}{\preceq}
\newcommand{\dtpi}{\wp}
\author{Lea Olja\v{c}a\thanks{School of Mathematical, Physical, and Computational Sciences, Univ.\ of Reading, United Kingdom}
  \and Tobias Kuna\footnotemark[1]
\and Jochen Br\"{o}cker\footnotemark[1]}
\title{Exponential stability and asymptotic properties of the optimal filter for signals with deterministic hyperbolic dynamics}
\begin{document}
\newcounter{mycounter}
\maketitle
\begin{abstract}
  The problem of stability of the optimal filter is revisited. The optimal filter (or filtering process) is the conditional probability of the current state of some stochastic process (the signal process), given both present and past values of another process (the observation process). Typically the filtering process satisfies a dynamical equation, and the stability of this dynamics is investigated. In contrast to previous work, signal processes given by the iterations of a deterministic mapping $f$ are considered, with only the initial condition being random. While the stability of the filter may emerge from strong randomness of the signal processes, different and more dynamical effects of the signal process will be exploited in the present work. More specifically, we consider uniformly hyperbolic $f$ with strong instabilities providing the necessary mixing. Exponential convergence of the filter is established, provided the filtering process is initialised with densities exhibiting a certain level of smoothness. Furthermore, $f$ may also have stable directions along which the filtering process will eventually not have a density, a major new technical difficulty. Further results demonstrate that the filtering process is asymptotically concentrated on the attractor and furthermore will have densities with respect to the invariant (SRB)~measure along unstable manifolds of $f$.

\end{abstract}
\tableofcontents
\section{Introduction}
The problem of {\em optimal filtering} consists in estimating the current state $X_n$ of a stochastic process $\{X_n, n \in \N_0\}$, the {\em signal process}, with some state space $E$.
The current state of the signal process is not directly accessible though.
Rather, we rely on an
{\em observation process} $\{Y_n, n \in \N\}$, usually with state space $\R^d$  for some $d$.
Moreover, we aim to estimate $X_n$ in a causal manner, that is, based on the observations $\{Y_n, n \in \N\}$ up to and including time $n$, only.
The object of study in filtering is therefore (a regular version of) the conditional probability
\beqn{equ:0.10}
\pi_n := \P(X_n \in . | Y_1, \ldots, Y_n)
\qquad \text{for all $n \in \N$;}
\eeq
typically, $\{\pi_n, n \in \N\}$ is referred to as the {\em filtering process}.
For a meaningful analysis of the filtering process, more specific assumptions need to be made regarding the signal and observation processes.
We will work in a setup known as \emph{Hidden Markov Models}~(HMM)~\cite{Capp2005};
The signal process $\{X_n, n \in \N_0\}$ is a homogeneous Markov chain on a polish state space $E$, while the observation process $\{Y_n, n \in \N\}$ is {\em conditionally independent} given the signal process.
This means that
\beqn{equ:0.20}
\P(Y_1, \ldots, Y_n | X_1, \ldots, X_n)
= \prod_{k = 1}^n P(Y_k| X_k)
\qquad \text{for all $n \in \N$,}
\eeq
(with slight abuse of notation; more precise definitions in Sec.~\ref{sec:nonlin_filter}).
Finally, we will impose a nondegeneracy assumption, namely that the conditional law $\P(Y_k|X_k)$ is independent of $k$ and given by a density (or likelihood) with respect to a given measure on $\R^d$.
In the context of HMM's, the filtering process satisfies the iterative relation
\begin{equation}\label{eq:filter}
\pi_n = \ncL_{Y_{n}} \pi_{n-1},
\qquad \text{for all $n \in \N$,}
\qquad
\pi_0: = \P(X_0 \in .),
\end{equation}
where for each $y \in \R^d$ the operator $\ncL_{y}$ is nonlinear and acts on the space of probability measures.

Two serious difficulties arise with deploying filters in practice.
Firstly, the initial condition $\pi_0$ (or the prior in a Bayesian interpretation) is required to initialise the filter; however we are unlikely to know the correct initial distribution accurately or at all.
Secondly, it is essentially impossible to calculate the filtering process explicitly in practice.
Approximation algorithms for the optimal filter are therefore important and subject to vigorous research.
(It is worth stressing however that the optimal filter can be computed in two special yet important situations.
For linear systems with Gaussian perturbations, the optimal filter is given by the celebrated Kalman filter, see for instance~\cite{Jaz1970,Anderson79}.
For signal processes with finite state space the filtering processes can be calculated explicitly, too.)
Both problems (unknown initial conditions as well as the necessity of approximations) relate to fundamental questions regarding the {\em stability} of the nonlinear filter.
Broadly speaking, the filtering process is said to be stable if
\beq{equ:0.30}
\limsup_{n \to \infty}
D(\ncL^n \rho_1, \ncL^n \rho_2) = 0,
\eeq
where $\ncL^n = \ncL_{Y_n} \circ \ldots \circ \ncL_{Y_1}$.
Further $D$ is a suitable metric on probability distributions, and Equation~\eqref{equ:0.30} holds for all $\rho_1, \rho_2$ from a suitable (and hopefully large) class of probability distributions over $E$.
The convergence may hold for instance almost surely or in expectation. 
It is evident that modes of filter stability are relevant in their own right, as they imply asymptotic insensitivity from potential errors in the choice of the initial distribution (provided that distribution is in a suitable class).
It might not be immediately evident though that filter stability is also key in attempts to approximate the filtering process.
It has been shown that stability with a summable decay rate (i.e.\ the convergence rate in Eq.~\ref{equ:0.30} is summable) is essential to proving a uniform in time convergence of the asymptotic approximation error for certain classes of approximation algorithms, most notably variations of the particle filter, see e.g.~\cite{DelMoral98,DelMoral2001,LeGland2004,Crisan2008,crisan2018stable}.
That is, at fixed computational cost, stable filters (with summable rate) can be approximated numerically, with errors that are bounded uniformly in time.

The earliest stability results relate to the Kalman Filter, where stability holds under the assumptions of observability and nondegeneracy of the noise \cite{Jaz1970,Anderson79}.
In~\cite{Ocone1996}, an early work on the stability of filtering outside the linear context, the authors were able to show that the filtering process, under certain assumptions, is $L^p$ stable with exponential rate.
This result was extended in~\cite{Atar97} to almost sure exponential stability in the total variation norm.
The ergodicity assumptions on the signal process were relaxed further in~\cite{LeGland2004}.
A seminal work by Kunita~\cite{KUNITA1971} attempted to identify general conditions for filter stability without rates.
Unfortunately a gap in the main proof was identified in~\cite{Bax04} which Van~Handel~\cite{Handel2009} was able to close under an additional nondegeneracy assumption on the observation (as mentioned above).
Still, the proof in~\cite{Handel2009} does not provide a rate of convergence (even if the signal process approaches the invariant distribution with a given rate of convergence).
In~\cite{tong2014}, similar results are shown but with a different methodology which is more amenable to infinite dimensional systems.
The stochastic 2D-Navier-Stokes equations are studied as an example; still the methodology does not provide convergence rates. 

Most of the work thus far has centered on signal processes with strong mixing properties due to stochasticity, which is a key element ensuring filter stability under these approaches.
Stability results for linear but nonrandom systems have appeared in the context of data assimilation~\cite{Bocquet2017}.
Nonlinear dynamical systems (including continuous time) are considered in~\cite{reddy_stability_filter_2019}.
Rather than exploiting dynamical mechanisms for filter stability, that work relies on a very strong
observability assumption (the observation process is a function of the signal process corrupted with noise, where the function has to be Lipschitz with Lipschitz inverse).
No rate of convergence is provided.
In~\cite{Brock2017} exponential stability of the filtering process is demonstrated for signals produced by random expanding maps, provided that the initial condition of the filter is sufficiently smooth.
The results rely on the dynamical properties of expanding maps, rather than on the stochasticity and in fact include the case of deterministic expanding maps.
A key mechanism is that expanding dynamics improve the smoothness of densities and may thus, in a certain sense, act similar to stochasticity. 
In the present work, we expand this analysis to signals arising from uniformly hyperbolic dynamical systems which, in contrast to expanding dynamics, may also have contracting directions.
Having to deal with these contracting directions, which will typically decrease the smoothness of densities, is not required for strongly stochastic systems but poses a major challenge in our analysis.
Our main assumptions, to be made precise later, are
\begin{enumerate}
\item The signal process satisfies $X_n = f(X_{n-1})$ for all $n \in \N$, where $f$ is a uniformly hyperbolic $C^2 $-diffeomorphism of a compact, connected Riemannian manifold $M$.
  Further, $\P(X_0 \in .) = \mu_0$, where $\mu_0$ is the unique SRB~measure of $f$.
\item The likelihood is a nonnegative $\log$-Lipschitz function on $\mathbb{R}^d$ with a tempered Lipschitz coefficient.
\end{enumerate}
Under these assumptions, we will argue that $\{(X_n, Y_n)\}$ is stationary and ergodic and can furthermore be extended to negative times, too.
More generally, we may assume that there exists an ergodic automorphism $ T : \Omega \to \Omega$,  preserving the probability $\P$ so that $ Y_n ( \omega ) = Y_0 ( T^{n} \omega )$ and similarly for $\{X_n\}$.
Our main result, Theorem~\ref{thm:three}, says that there exists a regular probability kernel $\mu:\Omega \times \cB(M) \to [0, 1]$ on $M$ such that almost surely
\begin{enumerate}
\item $\{\mu_{T^n \omega}, n \in \N\}$ solves Equation~\eqref{eq:filter} (albeit with random initial condition $\mu_{\omega}$).
\item Given any density $\phi$ such that $\log\phi$ is H\"{o}lder continuous (with sufficiently large exponent), we have that
  \[
  \limsup_{n \to \infty} \Big|
  \int \psi \ncL_{\omega}^n \phi \idd m - \int \psi \idd \mu_{T^n\omega}
  \Big| \to 0,
  \]
  and
  \[
  \limsup_{n \to \infty} \Big|
  \int \psi \ncL_{T^{-n}\omega}^n \phi \idd m - \int \psi \idd \mu_{\omega}
  \Big| \to 0,
  \]
  for all continuous $\psi$ (and a representation of the operator $\ncL$ that acts on densities).
  Furthermore, the rate of convergence is exponential if $\psi$ is H\"{o}lder continuous with sufficiently large exponent.
\end{enumerate}
In addition, an interpretation of $\mu$ is given as, roughly speaking, the SRB~measure of $f$ but conditional on the observations, with support contained in the support of the SRB~measure.
Finally, $\mu_{\omega}$ is shown to be absolutely continuous with respect to the SRB~measure along the unstable manifold in a suitable sense.
Our proofs rely on the fact that the filtering operator is related to the transfer operator of the dynamics $f$ which has been studied extensively~\cite{Baladi2000,Viana1997,Liverani1995}.
The approach used in the latter two works to obtain invariant (SRB)~measures will be used here, modulo a number of significant modifications.
As was already mentioned, hyperbolic dynamics also feature contracting directions which increase oscillations and eventually may render densities singular in those directions.
The key idea is to average densities locally along stable directions against suitable test functions and characterise densities through such local integrals rather than pointwise.
Another key aspect of the methodology is to consider convex cones of densities equipped with the Hilbert projective metric.
As a consequence of the projectivity, we can ignore a normalisation that appears in the filter operator $\ncL_y$ (due to the Bayes formula) and which renders this operator nonlinear.
This is an extremely convenient feature, and it is worth stressing that the Hilbert projective metric has already been used in the study of filter stability for instance in~\cite{Atar97, LeGland2004}, albeit only on the cone of nonnegative Borel measures.
In contrast to previous works using the Hilbert metric on cones though, due to the dependence on the (random) observation, the operator $\ncL_{\omega}$ will not be a contraction under the Hilbert metric on a single cone.
Rather, as in~\cite{Brock2017}, we need to construct a random cone $C_{\omega}$ which is invariant under $\ncL_{\omega}$ in the sense that $\ncL_{\omega}C_{\omega} \subset C_{T\omega}$, and so that the Hilbert projective metric is contracted.
The regular probability kernel $\mu$ referred to in our main result will then emerge as a kind of random (or pullback) fixed point of $\ncL_{\omega}$.
In Section~\ref{sec:nonlin_filter} we give precise definitions of the filtering operator and provide an expression in terms of the likelihood and the transfer operator of $f$.
Section~\ref{sec:assum} provides the main assumptions and statements of our main results.
Section~\ref{cone_contraction} discusses uniformly hyperbolic dynamics and discusses some key properties that will be needed in our proofs.
Furthermore, important results from the theory of cones and Hilbert projective metrics will be presented.
We will then construct a sequence of random cones which are invariant under the filtering and on which the operator is a strict contraction.
The proof of Theorem~\ref{thm:three} occupies Section~\ref{sec:Stability}, while Section~\ref{sec:abs_ctn} contains the proof of Theorem~\ref{thm:abs_ctn} regarding the absolute continuity of $\mu$ with respect to the SRB-measure in the unstable direction.
%% %
%%%%%%%%%%%%%%%%%%%%%%%%%%%%%%%%%%
% 
\section{Nonlinear filtering}\label{sec:nonlin_filter}
Let $(\Omega, \cF, \P)$ be a probability space.
The {\em signal process} is a homogeneous Markov process $\{X_n \colon n \in \N_0\}$ on a polish space $M$ endowed with the Borel sigma-algebra $\cB_M$. 
By $K$ we will denote the transition kernel of $\{X_n\}$ (i.e.\ $K$ is regular and $K(x, B) = \P(X_1 \in B | X_0 = x)$ a.s.); further, $\pi_0$ denotes the distribution of $X_0$.
Throughout the paper, we will use the abbreviations $K\varphi(x) = \int_E \varphi(z) K(x, \dd z)$ and $K\mu(B) = \int_E K(x, B) \idd \mu(x)$.
The {\em observation process} $\{Y_n; n \in \N\}$ is a process on $\R^d$ and is typically dependent on the signal process $\{X\}$; this dependence will be specified later.
By $ \cB_d $, we denote the Borel algebra of $ \R^{d} $.
For some subset $I \subset \N$ we will write $X_I$ for the set $\{ X_k, k \in I\}$ of random variables.
Likewise, we will write $Y_I$ for $\{ Y_k, k \in I\}$.
Later we will be able to redefine $\{X_k\}$ and $\{Y_k\}$ for $k \in \Z$, in which case we may have $I \subset \Z$.
In any event we assume that the sigma-algebras generated by $X_I$ or $Y_I$ to be trivial if $I$ is empty.
If $I = \{m, \ldots, n\}$, we will also use the shorthand $m{:}n$.

\begin{definition}
\label{def:filtering_process}
The {\em filtering process} $\{\pi_n; n \in \N_0 \}$ is a sequence of regular probability kernels on $M$ so that for $\P$--a.a.~$\omega$,
\beq{equ:filtering-process}
\pi_n(B) = \P \left(X_{n} \in B  | Y_{1{:}n}\right) \qquad \forall B \in \cB_M. 
\eeq
Note that $ \pi_{0} $ is the distribution of $ X_{0} $ in agreement with our previous definition of $\pi_0$.
\end{definition}
The problem of calculating $\pi_n$ is called {\em nonlinear filtering}.
Provided that further assumptions apply (to be specified later), the filtering process can be calculated in an iterative fashion.
Regarding the dependence between the signal and observation processes, we make the assumption of a homogeneous memoryless channel throughout the paper.
This means that given $A_k \in \mathcal{A}$ for $k = 1,\ldots, n$, we have 
\beq{equ:memoryless_channel}
\P \left(Y_{1} \in A_1, \ldots, Y_{n} \in A_{n} | X_{1{:}n} \right)
= \prod_{k = 1}^n \P (Y_{k} \in A_k | X_k). 
\eeq
Since $\R^d$ is separable, there exist regular probability kernels
\[
\Gamma_{n} \colon \mathcal{A} \times E \to [0,1]
\] 
so that for any $A \in \mathcal{A}$ we have $\P (Y_{n} \in A | X_n) = \Gamma_n(A, X_n)$~a.s. 
We further assume that $\Gamma_{n}$ does not depend on $n$.

We note that the initial distribution $\pi_0$ together with the Markov kernels $K$ and $\Gamma$ specify a unique model for the signal and observation process $\{(X_k, Y_k); k \in \N\}$ which satisfies the Memoryless Channel Assumption.
More specifically, using the measure extension theorem, it is easy to see that provided $\pi_0, K$, and~$\Gamma$ are given, there exists a unique distribution of the joint signal-observation process $\{(X_k, Y_k); k \in \N\}$ so that the Memoryless Channel Assumption holds.
We will therefore frame all subsequent conditions in terms of $\pi_0, K$, and~$\Gamma$,
and take $\P$ to be the resulting distribution of the signal-observation process, with $(\Omega, \cF)$ an appropriate coordinate space.
Regarding the kernel $\Gamma$, we further assume that $\Gamma(\cdot, x) $ is absolutely continuous with respect to some $\sigma$-finite Borel measure $ \lambda $ on $\R^d$ for all $x$.
Define the {\em likelihood function} 
\beq{eq:likelihood}
g(y, x) := \frac{\dd \Gamma(\cdot, x)}{\dd \lambda} (y).
\eeq
Again due to the separability of $\R^d$, we can assume that $g$ is measurable on $(\R^d \times M, \cB_d \otimes \cB_M)$ (see~\cite{Brock2017}, item~(3) of Lemma~A.1.).
Further, due to Tonelli's theorem we also have that for any probability measure $\nu$ on $(M, \cB_M)$, 
\[
\int_{\R^d \times M} g \: \dd (\lambda \otimes \nu)
 = \int_{M} \int_{\R^d } g(y, x) \dd \lambda(y) \dd \nu(x)
 = 1,
\] 
hence $g$ is integrable with respect to $\lambda \otimes \nu$ and $g(y, \cdot)$ is integrable with respect to $\nu$ except for $y$ in some $\lambda$--null~set.
\begin{proposition}\label{prop:random_filtering}
Under the memoryless channel assumption, the filtering process satisfies the following recursive relation:
\beq{eq:prediction}
\pi_n(\psi) = \frac{
\int_M \psi(x) g(Y_n, x) \pi_{n-1}^+(\dd x)}{
\int_M g(Y_n, x) \pi_{n-1}^+(\dd x)}
\eeq
where
\beq{eq:update}
\pi_{n-1}^+(\psi) = K\pi_{n-1}(\psi),
\eeq
for all $\psi$ measurable and bounded.
\end{proposition}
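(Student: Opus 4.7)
The plan is to follow the classical two-step derivation of the filter recursion in hidden Markov models: a \emph{prediction} step identifying $\pi_{n-1}^+ = K\pi_{n-1}$ with the conditional law of $X_n$ given $Y_{1{:}n-1}$, followed by a Bayesian \emph{update} step that incorporates the new observation $Y_n$.

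For the prediction step, the aim is to show $\P(X_n \in B \mid Y_{1{:}n-1}) = K\pi_{n-1}(B)$ almost surely, for all $B \in \cB_M$. The underlying property is that, under the Markov assumption on $\{X_n\}$ together with the memoryless channel assumption~\eqref{equ:memoryless_channel}, $X_n$ is conditionally independent of $Y_{1{:}n-1}$ given $X_{n-1}$. To verify this, I would fix bounded measurable $\phi$ and $h$ and compute $\E[\phi(X_n)\, h(Y_{1{:}n-1})]$ by conditioning on $X_{0{:}n}$: by~\eqref{equ:memoryless_channel}, the kernel $\P(Y_{1{:}n-1} \in \cdot \mid X_{1{:}n})$ depends only on $X_{1{:}n-1}$, so the inner expectation factors. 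Iterating the tower property, applying the Markov property, and invoking the defining identity of $\pi_{n-1}$ as a regular version of $\P(X_{n-1} \in \cdot \mid Y_{1{:}n-1})$, one arrives at the prediction formula.

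For the update step, the same factorisation afforded by~\eqref{equ:memoryless_channel} shows that, conditional on $(X_n, Y_{1{:}n-1})$, the observation $Y_n$ has density $g(\cdot, X_n)$ with respect to $\lambda$. Combined with the prediction step, the abstract conditional Bayes formula yields
\[
\E[\psi(X_n) \mid Y_{1{:}n}] = \frac{\int_M \psi(x)\, g(Y_n, x)\, \pi_{n-1}^+(\dd x)}{\int_M g(Y_n, x)\, \pi_{n-1}^+(\dd x)}
\]
almost surely. Concretely, denoting the right-hand side by $F(Y_{1{:}n})$, I would verify the defining identity of conditional expectation, $\E[F(Y_{1{:}n})\, H(Y_{1{:}n})] = \E[\psi(X_n)\, H(Y_{1{:}n})]$ for every bounded measurable $H : (\R^d)^n \to \R$, by disintegrating both sides along $X_{1{:}n}$ and substituting the product density supplied by~\eqref{equ:memoryless_channel}.

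The main obstacles are not algebraic but concern measurability and null sets. One has to verify that the right-hand side defines a regular probability kernel in the sense of Definition~\ref{def:filtering_process}, with an exceptional null set that can be chosen independently of $\psi$; this is handled by restricting to a countable determining class of test functions and then extending by monotone convergence. Equally, one needs the denominator $\int_M g(Y_n, x)\, \pi_{n-1}^+(\dd x)$ to be strictly positive $\P$-a.s.: this expression is precisely the conditional density of $Y_n$ given $Y_{1{:}n-1}$ evaluated at $Y_n$, which is positive almost surely by the integrability of $g(y,\cdot)$ off a $\lambda$-null set noted just below~\eqref{eq:likelihood}, together with the fact that $Y_n$ is almost surely supported on the set where its own conditional density is positive.
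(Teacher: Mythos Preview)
The paper does not provide its own proof of this proposition; it simply refers the reader to~\cite{Chigansky05}. Your sketch follows exactly the standard prediction--update derivation that appears in that reference and in any textbook treatment of hidden Markov models, and the reasoning is correct, including your attention to the measurability and null-set issues.
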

For a proof, see e.g. \cite{Chigansky05}.
In view of these relations, we define for each $y \in \R^d$ the unnormalised and normalised filter operators $\cL_y$ and $\ncL_y$ acting on Borel probability measures on $(M, \cB_M)$ as
\beq{eq:filterop_measure}
\cL_{y} \mu(\psi) = \int_M \psi(x) g(y,x) K\mu(\dd x)
\eeq
and
\beq{eq:filterop_measure_norm}
\ncL_{y} \mu(\psi) = \frac{\cL_y \mu (\psi)}{\cL_y \mu (1)}
\eeq
respectively.
The conclusion of Proposition~\ref{prop:random_filtering} can now be written as $\pi_{n} = \ncL_{Y_{n}} \pi_{n-1}$.
We note that while $\cL_y$ is a linear operator, $\ncL_y$ is nonlinear due to the normalisation.
We now let $M$ be a compact, connected Riemannian manifold with the Riemannian volume $m$ and $f:M \to M$ a diffeomorphism onto $f(M)$.
The main object of study of this paper will be the filtering process for ``deterministic'' signal processes, in the sense that
\beqn{eq:det_process}
X_{n+1} = f(X_n),
\qquad n \in \N.
\eeq
Clearly, the signal process remains random since the initial condition $X_0$ will still be random.
We aim to describe the filtering process for such signal processes.
The {\em transfer operator} $\mathscr{P}:L^1(m) \to L^1(m)$ of $f$ assigns to each $\phi \in L^1(m)$ the density $\scP\phi$ with respect to $m$ of the push-forward of $\phi \dd m$ under $f$,
that is, $ \scP\phi \in L^1(m) $ is the unique element up to sets of $m$-measure zero, such that for all test functions $\psi \in L^{\infty}(m)$ we have
\beq{eq:transfer_op}
\int_M \psi \cdot \scP\phi \idd m = \int_M \psi \circ f \cdot \phi \idd m. 
\eeq
Given Equation~\eqref{eq:transfer_op} and our assumptions on $f$, the transformation formula (or change of variables for smooth Riemannian manifolds) implies the following representation of the transfer operator
\beq{eq:transfer_op_diff}
\scP\phi(y) = \left\{
\begin{array}{ll}
\phi \circ f^{-1}(y) \big/ 
|(\det \DD f) \circ f^{-1}(y)|
& \text{if $y \in f(Q)$} \\
0 & \text{otherwise}
\end{array} \right.
\eeq
where $\det(\cdot)$ is the matrix determinant and $\DD f$ is the Jacobian matrix of $f$. 
Using the transfer operator, we get the following version of Proposition~\ref{prop:random_filtering} for the filtering process represented in terms of densities:
\begin{proposition}
Suppose that for some $n$, the filtering process $\pi_n$ has a density $p_n(x)$ w.r.t. to the Riemannian volume $m$. Then also $\pi_{n+1}$ has a density $p_{n+1}(x)$ given by
\begin{equation}
p_{n+1}(x) = \frac{g(Y_n, x)\scP p_n(x)}{\int_M g(Y_n, x)\scP p_n(x) \idd m(x)}
\end{equation}  
where $\scP$ is the transfer operator.
\end{proposition}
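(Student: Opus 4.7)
The plan is to combine the deterministic recursion for $K$ with the general filtering recursion of Proposition~\ref{prop:random_filtering} and the defining property of the transfer operator $\scP$.

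First, I would rewrite the predicted measure $\pi_n^+ = K\pi_n$ explicitly for the deterministic signal dynamics. Because $X_{n+1} = f(X_n)$, the Markov transition kernel is $K(x, B) = \mathbf{1}_B(f(x))$, hence $K\pi_n$ is simply the push-forward $f_\ast \pi_n$. Assuming $\pi_n$ has density $p_n$ with respect to $m$, I would then verify that $K\pi_n$ has density $\scP p_n$: for every bounded measurable test function $\psi$,
\[
\int_M \psi \, \dd(K\pi_n) = \int_M (\psi \circ f)\, p_n \, \dd m = \int_M \psi \cdot \scP p_n \, \dd m,
\]
where the last equality is exactly the defining relation~\eqref{eq:transfer_op} of $\scP$. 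This identifies $\scP p_n$ as the $m$-density of $K\pi_n$.

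Next, I would substitute this into the recursive formula~\eqref{eq:prediction} of Proposition~\ref{prop:random_filtering}. Writing $\pi_{n-1}^+$ in that formula in terms of its density $\scP p_n$, the numerator and denominator of~\eqref{eq:prediction} both become ordinary $m$-integrals, yielding, for every bounded measurable $\psi$,
\[
\pi_{n+1}(\psi) = \frac{\int_M \psi(x)\, g(Y_{n+1}, x)\, \scP p_n(x)\, \dd m(x)}{\int_M g(Y_{n+1}, x)\, \scP p_n(x)\, \dd m(x)}.
\]
Reading off the Radon--Nikodym derivative then gives the stated formula for $p_{n+1}$, up to the issue of well-definedness of the denominator.

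The one point requiring care is that the denominator $c_n := \int_M g(Y_{n+1}, x)\, \scP p_n(x)\, \dd m$ be almost surely strictly positive, for otherwise the normalisation is meaningless. Here I would use the observation already made just before Proposition~\ref{prop:random_filtering}: since $g$ is a likelihood with $\int g(y, x)\, \dd \lambda(y) = 1$ for every $x$, Tonelli gives that for any probability density $q$ on $M$ the set $\{y : \int g(y, x) q(x)\, \dd m(x) = 0\}$ is $\lambda$-null, and combining this with the conditional law of $Y_{n+1}$ given $X_{1:n+1}$ (which is absolutely continuous w.r.t.\ $\lambda$) shows $c_n > 0$ almost surely. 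This is really the only subtle step; everything else is bookkeeping from the definitions of $K$, $\scP$, and~\eqref{eq:prediction}.
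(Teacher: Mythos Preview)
Your proposal is correct and follows exactly the route the paper indicates: the paper's own proof is a single sentence stating that the result ``follows directly from Proposition~\ref{prop:random_filtering} and the definition of the transfer operator,'' and you have simply spelled out those two ingredients. The extra care you take about positivity of the normalising constant goes beyond what the paper records, and the minor index slip (writing $\pi_{n-1}^+$ where you mean $\pi_n^+$, and $Y_{n+1}$ where the paper's statement has $Y_n$) reflects an inconsistency already present in the paper rather than an error in your argument.
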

\begin{proof}
This follows directly from the Proposition~\ref{prop:random_filtering}  and definition of the transfer operator.
\end{proof}
Analogous to the filter operators $\cL$ and $\ncL$, we define new filtering operators that act on any density $p \in L^1(m)$ by
\beq{eq:filterop}
\scL_{y}p(x) = g(y,x)\scP p(x)
\eeq
and
\beq{eq:filterop_norm}
\nscL_{y}p(x) = \frac{\scL_{y}p(x)}{%
\|\scL_{y}p\|},
\eeq
where the norm is taken in $L_1(m)$.
Again, while $\scL_{y}$ is linear, $\nscL_y$ is a nonlinear operator.
So far, the distribution $\pi_0$ of $X_0$ could be any Borel probability measure.
It is easy to see that if the distribution $\pi_0$ of $X_0$ is invariant and ergodic with respect to $K$, the entire signal process is ergodic and, by standard arguments, is indeed defined also for negative times.
The observations can be likewise extended to negative times, and a minor modification of the proof of Lemma~2.5 in~\cite{Brock2017} will show that the joint signal--observation process $\{(X_k, Y_k); k \in \Z\}$ is a stationary and ergodic process.
%
%%%%%%%%%%%%%%%%%%%%%%%%%%%%%%%%%%
%
\section{Assumptions and statement of main result}
\label{sec:assum}
We are now ready to state the assumptions and the main result.
\begin{assumption}\label{ass:Lipschitz}
\begin{enumerate}
\item The mapping $f:M \to M$ is a $C^2$-Diffeomorphism onto $f(M)$ with an open set $Q \subset M$ such that $f(\bar{Q}) \subset Q$.
\item The maximal invariant set $\Lambda := \cap_{n \geq 1} f^n(Q)$ is uniformly hyperbolic for $f$ and furthermore transitive, that is, $\Lambda$ contains a dense orbit.
\item The likelihood function $g$ defined by Equation~\eqref{eq:likelihood} is non-negative and almost surely log-Lipschitz, that is, there exists a positive random variable $G$, almost surely finite, such that,
  \begin{equation}\label{eq:holder_g}
\frac{g(Y_1(\omega), x_1)}{g(Y_1(\omega), x_2)} \leq e^{G(\omega)d(x_1,x_2)},
\end{equation}
  for all $x_1,x_2 \in Q$.
\item\label{ass:Lipschitz_tempered} $G$ is a \emph{tempered} random variable with respect to the automorphism $T:\Omega \to \Omega$ (introduced below), that is $\limsup_{n \to \pm \infty} \log_+ G(T^n \omega) = 0$.
\end{enumerate}
\end{assumption}
The automorphism $T:\Omega \to \Omega$ referred to in Assumption~\ref{ass:Lipschitz}, item~\ref{ass:Lipschitz_tempered} arises as follows.
Under Assumption~\ref{ass:Lipschitz}, items~(1,2) $f$ admits a unique SRB~measure $\mu_0$ which in particular is invariant and ergodic.
(We stress however that the SRB~measure $\mu_0$ does not have a density with respect to the Riemannian volume $m$.)
By taking $(\Omega, \cF)$ to be an appropriate coordinate space and $\P$ as the probability defined through the kernels $\Gamma, K$, the Memoryless Channel Assumption, and by taking $\mu_0$ as the distribution of $X_0$, we may therefore assume (as per the discussion at the end of the previous section) that the joint signal--observation process $\{(X_k, Y_k), k \in \Z\}$ is a bilateral stationary and ergodic Markov process, that is, with time in $\Z$.
Further, there exists an ergodic automorphism $T:\Omega \to \Omega$ of $(\Omega, \cF, \P)$ such that $Y_k = Y_0 \circ T^k$ and $X_k = X_0 \circ T^k$ for all $k \in \Z$.
To formulate our theorems concisely, we will write (with a slight shift in notation)
\beq{equ:3.10}
\begin{split}
  \cL_{\omega} & := \cL_{Y_0(\omega)}
  \qquad \text{and} \\
\cL^n_{\omega} & := \cL_{T^{n-1} \omega} \circ \ldots \circ \cL_{\omega}
= \cL_{Y_{n-1}(\omega)} \circ \ldots \circ \cL_{Y_0(\omega)},
\end{split}
  \eeq
  with a similar convention for $\scL_{\omega}$.
We can now state the main results of this paper.
\begin{theorem}\label{thm:three}
  There exists a set $\Omega_1 \subset \Omega$ of full measure and a regular probability kernel $\mu: \Omega_1 \times \cB_{M} \to [0, 1]$ such that
  \begin{enumerate}
    \item\label{itm:asymptotic_filter} for any fixed $A \in \cB_{M}$, $\mu(\, \cdot \, , A)$ is a version of $\P(X_0 \in A | Y_{-\infty:0})$, the optimal filter starting from the infinite past;
      \item\label{itm:covariant} $\mu$ is \emph{covariant} under the filtering operator \eqref{equ:3.10}, that is, for all $\omega \in \Omega_1$ and continuous $\psi: Q \to \mathbb{R}$, it holds that 
        \begin{align}\label{eq:covariance}
\ncL_{\omega}\mu_{\omega}(\psi)= \mu_{T\omega}({\psi});
\end{align}
        \item\label{itm:stability} there exists a constant $\tilde{\beta} > 0$ such that for all strictly positive functions $\phi: Q \to \mathbb{R}_{>0}$ s.t. $\log \phi$ is $\nu$-H\"{o}lder continuous, for all $\omega \in \Omega_1$ and for all $\hat{\mu}$-H\"{o}lder continuous $\psi: Q \to \mathbb{R}$, it holds that 
          \begin{equation}\label{eq:exp_conv}
\lim_{n \to \infty} n^{-1}\log\Big|\int \psi\nscL_{\omega}^n\phi \idd m - \int \psi \idd \mu_{T^n \omega}\Big| \leq -\tilde{\beta};
          \end{equation}
        and 
\begin{equation}\label{eq:exp_conv_2}
\lim_{n \to \infty} n^{-1}\log\Big|\int \psi\nscL_{T^{-n}\omega}^n\phi \idd m - \int \psi \idd \mu_{ \omega}\Big| \leq -\tilde{\beta},
\end{equation}
where $\nu, \hat{\mu}$ are H\"{o}lder exponents given in Lemma~\ref{lemma:condition_C}.
  \end{enumerate}
\end{theorem}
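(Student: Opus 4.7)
The proof plan is to realise $\mu_\omega$ as a pullback limit of the iterated filter on a random invariant cone and then translate Hilbert-metric contraction into the three stated conclusions. The work splits into four steps.

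First, I invoke from Section~\ref{cone_contraction} the family of closed convex random cones $\{C_\omega\}$ of densities (characterised via H\"older-type estimates on local integrals against test functions along stable manifolds), together with the two crucial properties $\scL_\omega C_\omega \subset C_{T\omega}$ and finite projective diameter $\Delta(\omega)$ of the image inside $C_{T\omega}$. By Birkhoff's theorem for cones, $\scL_\omega$ then contracts the Hilbert projective metric $d_\omega$ by a factor $\theta(\omega) = \tanh(\Delta(\omega)/4) < 1$. Temperedness of $G$ from Assumption~\ref{ass:Lipschitz}(\ref{ass:Lipschitz_tempered}) propagates to $\Delta$ and $\theta$ and, combined with the Birkhoff--Kingman ergodic theorem, yields a set $\Omega_1$ of full measure on which $n^{-1}\sum_{k<n}\log\theta(T^k\omega) \to -\tilde\beta < 0$.

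Second, I construct $\mu_\omega$ by pullback. Fix any reference density $\phi^\ast$ with $\log\phi^\ast$ $\nu$-H\"older so that $\phi^\ast \in C_{T^{-n}\omega}$ eventually for a.e.\ $\omega$, and set $\mu^{(n)}_\omega := \nscL^n_{T^{-n}\omega}\phi^\ast$. For $m<n$ a telescoping estimate bounds $d_\omega(\mu^{(m)}_\omega, \mu^{(n)}_\omega)$ by $\Delta(T^{-m}\omega)\prod_{k=1}^{m-1}\theta(T^{-k}\omega)$, which vanishes by the first step. Standard inequalities translate this into Cauchyness of the pairings $\mu^{(n)}_\omega(\psi)$ for every continuous $\psi$; the Riesz representation theorem then produces a regular probability kernel $\mu : \Omega_1 \times \cB_M \to [0,1]$, independent of $\phi^\ast$. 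Item~(\ref{itm:covariant}) follows by passing $n \to \infty$ in $\nscL_\omega\mu^{(n)}_\omega = \mu^{(n+1)}_{T\omega}$, using continuity of $\nscL_\omega$ against continuous test functions (a consequence of the log-Lipschitz bound on $g$).

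Third, item~(\ref{itm:stability}) is obtained by replacing $\phi^\ast$ with an admissible $\phi$: the contraction gives
\[
d_{T^n\omega}\bigl(\nscL^n_\omega\phi,\ \nscL^n_\omega\phi^\ast\bigr) \le \Big(\prod_{k=0}^{n-1}\theta(T^k\omega)\Big)\, d_\omega(\phi,\phi^\ast),
\]
and Birkhoff applied to $\log\theta$ produces the exponential rate $-\tilde\beta$ in both~\eqref{eq:exp_conv} and~\eqref{eq:exp_conv_2}; the H\"older hypothesis on $\psi$ is used only to convert $d_{T^n\omega}$-closeness into closeness of the dual pairings $\int \psi \,\dd(\cdot)$. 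For item~(\ref{itm:asymptotic_filter}), I combine stability with a reverse-martingale argument: $\xi_n(\omega, A) := \P(X_0 \in A \mid Y_{-n:0})$ is a reverse martingale converging a.s.\ to $\P(X_0 \in A \mid Y_{-\infty:0})$, while by construction $\xi_n = \nscL^n_{T^{-n}\omega}\mu_0$; after one initial filter step the resulting measure is (or is approximable by) an element of the cone, so stability forces $\xi_n \to \mu_\omega$, and uniqueness of the limit identifies the two.

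The main obstacle is the loss of smoothness along stable directions: $\mu_\omega$ fails to be absolutely continuous with respect to $m$, so the cone cannot be defined pointwise and must be expressed through integration against test functions along stable leaves. This is also what makes the identification in item~(\ref{itm:asymptotic_filter}) delicate, since $\mu_0$ itself has no density; the fix is to exploit the smoothing of $\scP$ along unstable directions and the SRB structure to approximate $\nscL_{T^{-n}\omega}\mu_0$ by genuine cone elements, bringing the stability bound of step~three to bear and closing the identification.
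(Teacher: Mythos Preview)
Your proposal is correct and follows essentially the same approach as the paper: random invariant cones with finite projective diameter, pullback Cauchy sequence yielding $\mu_\omega$ via Riesz, covariance by passing the limit through $\ncL_\omega$, stability from the Birkhoff ergodic theorem applied to $\log$ of the contraction factor, and item~(\ref{itm:asymptotic_filter}) via martingale convergence combined with approximating $\mu_0$ through the SRB construction $\scP^k\cf$. The only cosmetic differences are that the paper takes $\phi^\ast=\cf$ (which lies in every $C_\omega$ outright, avoiding the ``eventually in the cone'' step), uses the contraction factor $1-e^{-\bar D}$ rather than $\tanh(\Delta/4)$, and makes the SRB approximation fully explicit via the invariance $\scP C_\omega\subset C_\omega$ (Corollary~\ref{lemma:inv_P}), which is exactly the ``fix'' you sketch in your final paragraph.
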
 
Stability results similar to Equations~(\ref{eq:exp_conv},\ref{eq:exp_conv_2}) hold even when $\psi$ is merely continuous, albeit not with an explicit rate of convergence:
\begin{corollary}\label{cor:conv}
  For all $\omega \in \Omega_1$, for $\mu$ and $\phi$ as in Theorem~\ref{thm:three} and
  for all continuous $\psi: Q \to \mathbb{R}$ it holds that
      \begin{equation}\label{eq:conv}
\lim_{n \to \infty}(\int \psi\nscL_{\omega}^n\phi \idd m - \int \psi \idd \mu_{T^n \omega}) \to 0;
      \end{equation}
      and
\begin{equation}\label{eq:conv_2}
\lim_{n \to \infty}(\int \psi\nscL_{T^{-n}\omega}^n\phi \idd m - \int \psi \idd \mu_{\omega}) \to 0.
\end{equation}
\end{corollary}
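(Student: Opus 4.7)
The plan is a standard density argument: since the continuous functions on the compact set $\bar Q$ can be uniformly approximated by $\hat\mu$-H\"older continuous functions, and since $\nscL_\omega^n\phi \, dm$ and $\mu_{T^n\omega}$ are both probability measures (hence give uniformly bounded functionals of $\psi$), the H\"older rate from Theorem~\ref{thm:three} transfers to qualitative convergence for continuous $\psi$.

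More concretely, fix $\omega \in \Omega_1$, $\phi$ as in the statement, and a continuous $\psi:\bar Q\to\mathbb{R}$. Given $\epsilon>0$, I would produce a $\hat\mu$-H\"older continuous function $\psi_\epsilon$ with $\|\psi-\psi_\epsilon\|_\infty<\epsilon$; such an approximation exists on the compact metric space $\bar Q$ because the $\hat\mu$-H\"older functions form a unital subalgebra of $C(\bar Q)$ separating points and thus are dense by the Stone--Weierstrass theorem (alternatively, one can construct $\psi_\epsilon$ explicitly by mollification in local charts, which yields Lipschitz and a fortiori $\hat\mu$-H\"older approximants). Then decompose
\[
\Bigl|\int\psi\,\nscL_\omega^n\phi\,dm-\int\psi\,d\mu_{T^n\omega}\Bigr|
\leq \Bigl|\int\psi_\epsilon\,\nscL_\omega^n\phi\,dm-\int\psi_\epsilon\,d\mu_{T^n\omega}\Bigr|+2\|\psi-\psi_\epsilon\|_\infty,
\]
using that both $\nscL_\omega^n\phi\,dm$ and $\mu_{T^n\omega}$ are probability measures. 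By Theorem~\ref{thm:three}~\eqref{eq:exp_conv} applied to $\psi_\epsilon$, the first term tends to $0$, hence the $\limsup$ of the left-hand side is at most $2\epsilon$. Letting $\epsilon\downarrow 0$ gives \eqref{eq:conv}. The pullback statement \eqref{eq:conv_2} is identical, invoking \eqref{eq:exp_conv_2} in place of \eqref{eq:exp_conv}.

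There is no serious obstacle here beyond confirming that the H\"older-density argument applies to the specific exponent $\hat\mu$ fixed in Lemma~\ref{lemma:condition_C}; this is immediate because any positive H\"older exponent suffices for Stone--Weierstrass density on a compact metric space. The one slightly delicate point to record carefully is that the exceptional null-measure set on which the estimate may fail is the same $\Omega\setminus\Omega_1$ as in Theorem~\ref{thm:three}, independent of $\psi$, because the approximation $\psi_\epsilon$ is chosen after $\omega$ has been fixed.
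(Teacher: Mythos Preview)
Your proposal is correct and follows essentially the same approach as the paper: approximate a general continuous $\psi$ uniformly by a $\hat\mu$-H\"older function via Stone--Weierstrass, split the difference using that both measures are probabilities, and invoke Theorem~\ref{thm:three}\,(\ref{itm:stability}) for the H\"older approximant. The paper's proof is virtually identical, down to the $2\epsilon$ bound and the remark that part~\eqref{eq:conv_2} follows by the same argument from~\eqref{eq:exp_conv_2}.
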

A second corollary states that the support of the filtering measure $\mu_{\omega}$ is contained in the support of the SRB measure $\mu_0$ almost surely.
\begin{corollary}\label{thm:support}
  \begin{enumerate}
    \item \label{itm:3.30}
$ \mathbb{E}(\mu_{\omega}(A)) = \mu_0(A)$ for all $A \in \cB_{M}$.
    \item \label{itm:3.40}
      Let $S$ be the support of the SRB measure $\mu_0$. Then \[\text{supp}(\mu_{\omega}) \subseteq S \subset \Lambda,\] for all $\omega$ in a set of measure 1.
  \end{enumerate}
  \end{corollary}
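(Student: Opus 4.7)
The plan is to derive both items as near-immediate consequences of the characterisation of $\mu_{\omega}$ as the optimal filter initialised at the infinite past provided by Theorem~\ref{thm:three}, Item~\ref{itm:asymptotic_filter}, combined with the fact (set up just after Assumption~\ref{ass:Lipschitz}) that $X_0$ is distributed according to the SRB~measure $\mu_0$.

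For Item~\ref{itm:3.30}, the tower property applied to the characterisation $\mu(\,\cdot\,,A)=\P(X_0\in A\,|\,Y_{-\infty:0})$ gives
\[
\mathbb{E}(\mu_{\omega}(A))
= \mathbb{E}\bigl(\P(X_0 \in A \,|\, Y_{-\infty:0})\bigr)
= \P(X_0 \in A) = \mu_0(A),
\]
which is precisely the claim.

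For Item~\ref{itm:3.40}, the crucial observation is that $S:=\text{supp}(\mu_0)$ is closed in $M$, so $S^c$ is open and in particular Borel. Applying Item~\ref{itm:3.30} with $A = S^c$ yields $\mathbb{E}(\mu_{\omega}(S^c)) = \mu_0(S^c) = 0$, and since $\mu_{\omega}(S^c) \geq 0$ this forces $\mu_{\omega}(S^c) = 0$ on a single $\P$-full measure set $\Omega_2 \subset \Omega$. Because $S$ is closed, the definition of the support of a regular Borel measure on $M$ then yields $\text{supp}(\mu_{\omega}) \subseteq S$ for every $\omega \in \Omega_2$. The remaining inclusion $S \subset \Lambda$ is a classical property of SRB~measures on uniformly hyperbolic attractors: invariance of $\mu_0$ under $f$ together with the fact that $\mu_0$ is supported in $\bar{Q}$ forces $\text{supp}(\mu_0) \subseteq f^n(\bar{Q}) \subset f^n(Q)$ for every $n \geq 1$, and hence $\text{supp}(\mu_0) \subseteq \bigcap_{n \geq 1} f^n(Q) = \Lambda$.

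There is no serious obstacle here. The only point needing any care is the assertion that a \emph{single} full measure set of $\omega$ serves Item~\ref{itm:3.40} uniformly in the Borel set under consideration; this is automatic in the argument above because the exceptional null event is the single event $\{\mu_{\omega}(S^c) > 0\}$ rather than a family indexed by $A$. Intersecting $\Omega_2$ with the full measure set $\Omega_1$ from Theorem~\ref{thm:three} yields a single full measure set on which both conclusions hold simultaneously.
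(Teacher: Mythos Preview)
Your proof is correct and follows essentially the same approach as the paper: both items are deduced from Theorem~\ref{thm:three}, Item~\ref{itm:asymptotic_filter} by taking expectations (tower property) for Item~\ref{itm:3.30}, and then applying Item~\ref{itm:3.30} with $A=S^c$ together with nonnegativity for Item~\ref{itm:3.40}. Your version is slightly more detailed---you spell out the tower property, justify $S\subset\Lambda$ via invariance, and address the single full-measure set---but the argument is the same.
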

Our second theorem shows that the asymptotic filtering process $\mu$ is absolutely continuous with respect to the SRB~measure $\mu_0$ if we average over the {\em local stable leaves} of $f$.
Pending a more precise definition and discussion in Section~\ref{cone_contraction}, let $\cB_s$ be the sigma~algebra generated by the family of local stable leaves on $Q$.
Then we have
\begin{theorem}\label{thm:abs_ctn}
There is an almost surely finite random variable $C$ such that for every nonnegative $\psi \in L^1(\cB_s)$ we have 
\begin{enumerate}  
\item\label{itm:3.10}
\begin{equation}
\frac{1}{C(\omega)} \int_Q \psi \idd m \leq \int_Q \psi \idd \mu_{\omega} \leq C(\omega) \int_Q \psi \idd m,
\end{equation}
for almost all $\omega \in \Omega$;
\item\label{itm:3.20}
  \begin{equation}
\frac{1}{C(\omega)} \int_Q \psi \idd \mu_{0}  \leq \int_Q \psi \idd \mu_{\omega} \leq C(\omega) \int_Q \psi \idd \mu_{0},
\end{equation}
for almost all $\omega \in \Omega$.
\end{enumerate}
\end{theorem}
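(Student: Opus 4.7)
The plan is to exploit the pullback representation of $\mu_\omega$ from Theorem~\ref{thm:three}. Fixing any strictly positive initial density $\phi$ whose logarithm is H\"older continuous with sufficiently large exponent (for instance $\phi \equiv 1/m(Q)$ on $Q$), equation~\eqref{eq:exp_conv_2} gives $\int \psi \cdot \nscL^n_{T^{-n}\omega}\phi \idd m \to \int \psi \idd \mu_\omega$ for every H\"older continuous $\psi$. The strategy is to upgrade this convergence to nonnegative $\psi \in L^1(\cB_s)$ while inheriting uniform two-sided bounds from the cone invariance.

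For the first step I would check that, for $n$ sufficiently large (with an absorption time depending on $\omega$), the density $p_n := \nscL^n_{T^{-n}\omega}\phi$ lies in the invariant random cone $C_\omega$ constructed in Section~\ref{cone_contraction}. This is exactly the point of the cone construction: sufficiently regular densities are absorbed into the cone after a controlled number of iterations, and membership is preserved by $\nscL_\omega$. Using the defining property of the cone, whose elements are precisely characterised by uniformly bounded local averages along stable leaves (cf.~Lemma~\ref{lemma:condition_C}), I would extract the uniform-in-$n$ bound
\[
\frac{1}{C(\omega)} \int_Q \psi \idd m \;\leq\; \int_Q \psi \cdot p_n \idd m \;\leq\; C(\omega) \int_Q \psi \idd m
\]
for every nonnegative $\psi \in L^1(\cB_s)$, by a monotone-class extension from the local stable-leaf test functions featuring in the cone condition to the full class of nonnegative $\cB_s$-measurable functions. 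The random constant $C(\omega)$ is governed by the aperture of $C_\omega$ and is almost surely finite by the temperedness of $G$ (Assumption~\ref{ass:Lipschitz}, item~\ref{ass:Lipschitz_tempered}). To deduce item~\eqref{itm:3.10} I would pass to the limit $n\to\infty$: approximating $\psi \in L^1(\cB_s)$ by H\"older continuous functions, the uniform bound controls the approximation error, and~\eqref{eq:exp_conv_2} then yields $\int \psi \idd \mu_\omega$ as the limit, which therefore inherits the inequality.

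For item~\eqref{itm:3.20} I would reduce to item~\eqref{itm:3.10} via a deterministic equivalence of the restricted measures $m|_{\cB_s}$ and $\mu_0|_{\cB_s}$. This is a classical consequence of uniform hyperbolicity: every point of $Q$ is joined by a local stable holonomy to a point of $\Lambda$, so $\cB_s$-measurable functions on $Q$ effectively factor through the attractor; the absolute continuity of the stable holonomy together with the bounded densities of the SRB conditionals along unstable manifolds then gives $\int \psi \idd m \asymp \int \psi \idd \mu_0$ with a deterministic constant for all nonnegative $\psi \in L^1(\cB_s)$. Composing this with item~\eqref{itm:3.10} yields item~\eqref{itm:3.20}. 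The main obstacle I anticipate is carrying out the monotone-class extension quantitatively: the cone condition directly controls only localised stable-leaf averages, so the extension to arbitrary $\cB_s$-integrals must keep track of the random constants in such a way that $C(\omega) < \infty$ almost surely, which is where the temperedness of $G$ (and hence of the cone aperture along the orbit of $\omega$) is decisive.
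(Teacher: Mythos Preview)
Your overall architecture is right and matches the paper: one proves the two-sided inequality first for the approximating densities $p_n:=\nscL^n_{T^{-n}\omega}\phi$ using their cone membership, then passes to the limit; item~\eqref{itm:3.20} is obtained exactly as you say, by composing item~\eqref{itm:3.10} with the deterministic equivalence $m|_{\cB_s}\asymp\mu_0|_{\cB_s}$ (this is precisely Lemma~\ref{lemma:abs_cts_SRB}, quoted from~\cite{Viana1997}). One simplification: if you take $\phi\equiv\cf$, then by Lemma~\ref{lemma:zeta} the densities $\bar\zeta_n(\omega)=\nscL^n_{T^{-n}\omega}\cf$ lie in $\cC_\omega$ for \emph{every} $n$, so no absorption time is needed.

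The genuine gap is in the step where you extract the inequality from cone membership. Your description of the cone as ``characterised by uniformly bounded local averages along stable leaves'' is not accurate: the condition $\scC$ in Definition~\ref{def:cone} controls only the \emph{ratio} $\int_\gamma\phi\rho\big/\int_\delta\phi\,\pi^*\rho$ between \emph{nearby} leaves, not a global bound on $\int_\gamma\phi\rho$ itself. A monotone-class extension from the cone test functions $\rho\in\cD(a,\mu,\gamma)$ to $L^1(\cB_s)$ does not make sense as stated, because the $\rho$'s are densities on single leaves, not $\cB_s$-measurable functions on $Q$. The mechanism the paper actually uses is different and quite concrete: one invokes the Rohlin disintegration of $m$ along the local stable foliation (Section~\ref{sec:stable_leaves}), whose conditional densities $H_\gamma$ are $(a_0,\nu_0)$-log-H\"older and hence belong to $\cD(\hat a_\omega,\hat\mu,\gamma)$. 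The cone condition on $\bar\zeta_n$ (together with Lemma~\ref{lemma:conditionB}) then bounds $\int_\gamma\bar\zeta_n H_\gamma\big/\int_\delta\bar\zeta_n H_\delta$ by $K(\omega)=\exp(D_0(\hat a_\omega)+c_\omega)$; combined with the normalisation $\int\bar\zeta_n\,\dd m=1$ this yields $\int_\gamma\bar\zeta_n H_\gamma\le K(\omega)$ uniformly in $\gamma$. Finally, the key point about $\psi\in L^1(\cB_s)$ is that $\psi$ is \emph{constant on each leaf}, so $\int_Q\psi\,\bar\zeta_n\,\dd m=\int\psi(\gamma)\big(\int_\gamma\bar\zeta_n H_\gamma\big)\,\dd\tilde m(\gamma)\le K(\omega)\int\psi\,\dd m$ directly, with no monotone-class argument required.
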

\begin{remark}
We may assume that $\Omega_1 $ is invariant, and hence we can take $\Omega_1 $ to be the new $\Omega$ going forward, so that the statements of Theorem~\ref{thm:three} and its corollaries hold for all $\omega$, as opposed to almost surely. To see this, consider the exceptional set $N =  \Omega \backslash \Omega_1$ and the set $\tilde{N}$ formed of a union of all iterates, backward and forward under $T$, that is \[\tilde{N}=\cup_{k \in \mathbb{Z}}T^k(N).\] Since $T$ is $\P$-invariant and we are taking a countable union, it follows that $\tilde{N}$ has measure zero. Hence the set $\Omega \backslash \tilde{N}$ has full measure and is invariant under $T$ and we can take $\Omega_1 = \Omega \backslash \tilde{N}.$
\end{remark}
%
%%%%%%%%%%%%%%%%%%%%%%%%%%%%%%%%%%%%%%%%%%
% 
\section{Hyperbolic dynamics and cones of densities}
\label{cone_contraction}
In this section we will show that the filtering operators of Equation~\eqref{eq:filterop_norm} discussed in Section~\ref{sec:nonlin_filter} and under hyperbolic dynamics $f$, leave a certain family of random cones of bounded densities invariant (Proposition~\ref{thm:one}) and furthermore, that the diameter of each image cone is finite (Proposition~\ref{thm:two}). Then, by Proposition~\ref{prop:finite_diam}, we can deduce that the random sequence of filtering operators are strictly contracting under the Hilbert projective metric. 

Since we will be working in the projective space of cones of functions (see Section~\ref{sec:cones}), we can, for the moment, ignore the normalization, and work instead with the linear operator $\scL_{\omega}$, which simplifies the presentation. 

We modify the construction and proofs in \cite{Viana1997}, which deal with the transfer operator $\scP$ in order to be applicable to our problems.
We introduce a random family of cones and track how the parameters of the cone change in time (Lemmas (\ref{lemma:test_cone}-\ref{lemma:condition_C})).

In Propositions~\ref{thm:one} and~\ref{thm:two}, we proceed to construct a random sequence of cones depending on $\omega$ which are invariant and contracting under the sequence of random operators $\{\scL_{T^k\omega}\}_k\geq 0$.
\subsection{Hyperbolic sets and attractors}
\label{sec:hyper}
We will follow the notation of \cite{Viana1997}, Chapter~4. 
To set the stage, we define a uniformly hyperbolic set for a diffeomorphism $f$ on a compact manifold $M$.
\begin{definition}
  Let $f:M \to M$ be a $C^1$ diffeomorphism and $\Lambda$ be a compact subset of $M$ such that $f(\Lambda) = \Lambda$. We say that $\Lambda$ is uniformly hyperbolic for $f$ if there exists a continuous splitting of the tangent bundle $T_{\Lambda}M = E^s \oplus E^u $ such that the splitting is invariant under derivative $\DD f$. %
  Furthermore $E^u$ is expanding while $E^s$ is contracting under $\DD f$. That is, for every $x \in \Lambda,$
  \begin{enumerate}
  \item  \[\DD f^{-1}(x)\cdot E^u_x = E^u_{f^{-1}(x)} \] and \[\DD f(x)\cdot E^s_x = E^s_{f(x)}, \]
  \item there exist constants $C>0$ and $0<\lambda < 1$ such that \[\|\DD f^{-n}|E^u_x\| \leq C\lambda^n\] and \[\|\DD f^{n}|E^s_x\| \leq C\lambda^n,\] for every $n \geq 1$.
    \end{enumerate}
\end{definition}
We define the \textit{stable manifold} $W^s(x)$ of a point $x \in M$ as the set of points whose forward orbit approaches that of $x$ asymptotically, that is,
\[W^s(x) = \{y \in M; \lim_{n \to \infty}d(f^n(x), f^n(y)) = 0\}.\]
For any $\epsilon> 0$, we also define the \textit{local stable manifold} of $x \in M$ as
\[W^s_{\epsilon}(x) = \{y \in M; \lim_{n \to \infty}d(f^n(x), f^n(y)) = 0 \ \text{and} \ d(f^n(x), f^n(y)) \leq \epsilon, \ \forall n \geq 0\}.\]
The below Stable Manifold Theorem is stated as in \cite{Viana2004}, the proof of which can be found in \cite{Shub1987}, Theorem 6.2 (see also\cite{Katok1995}, Theorem 6.4.9).
\begin{proposition}[Stable Manifold Theorem]\label{thm:Stable_Manifold}
Let $\Lambda$ be a hyperbolic set for a $C^r$ diffeomorphism $f: M \to M$. Provided $\epsilon>0$ is small enough, every local stable manifold $W^s_{\epsilon}(x), \, x \in \Lambda$, is a $C^r$ embedded disk in $M$ with \[T_xW^s_{\epsilon}(x) = E^s_x\] and 
\[f(W^s_{\epsilon}(x)) \subset W^s_{\epsilon}(f(x)).\]
Moreover, there are $C>0$ and $0<\lambda <1$ such that
\[d(f^n(x), f^n(y)) \leq C\lambda^nd(x,y),\]
whenever $y \in W^s_{\epsilon}(x).$ In addition, $W^s_{\epsilon}(x)$ varies continuously with $x$: given any $p \in \Lambda$, there exists a neighbourhood $V_p$ of $p$ inside $\Lambda$ and a continuous map
\[\Phi_p: V_p \to Emb^r(W^s_{\epsilon}(p), M),\] such that $\Phi_p(p)$ is the inclusion of $W^s_{\epsilon}(p)$ in $M$ and every $W^s_{\epsilon}(x)$, $ x \in V_p$ is the image of $W^s_{\epsilon}(p)$ under $\Phi_p(x)$.
\end{proposition}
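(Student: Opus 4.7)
The strategy I would pursue is the classical Hadamard--Perron construction of the stable manifold as the fixed point of a graph transform along the orbit of $x$. For each $x \in \Lambda$, I would first pass to local coordinates using the exponential map $\exp_x : T_x M \to M$, thereby replacing $f$ along the forward orbit $\{f^n(x)\}_{n \geq 0}$ with a sequence of maps $\tilde{f}_n : U_n \subset T_{f^n(x)}M \to T_{f^{n+1}(x)}M$ that fix the origin and whose linearisation at $0$ is $\DD f(f^n(x))$, respecting the splitting $T_{f^n(x)}M = E^s_{f^n(x)} \oplus E^u_{f^n(x)}$ with $\|\DD f|E^s\| \leq C\lambda$ and $\|\DD f^{-1}|E^u\| \leq C\lambda$. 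The target local stable manifold $W^s_\epsilon(x)$ should then be represented in the chart at $x$ as the graph of a $C^r$ map $\sigma_x : E^s_x(\epsilon) \to E^u_x$ with $\sigma_x(0) = 0$ and $\mathrm{D}\sigma_x(0) = 0$.

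Next I would build $\sigma_x$ by a contraction argument. Let $\mathcal{X}$ be the space of sequences $(\sigma_n)_{n \geq 0}$, where $\sigma_n : E^s_{f^n(x)}(\epsilon) \to E^u_{f^n(x)}$ fixes the origin and has Lipschitz constant at most some small $\alpha$, equipped with the supremum norm. Define the graph transform $\Gamma$ sending $(\sigma_n)_n$ to $(\sigma'_n)_n$, where the graph of $\sigma'_n$ is characterised as the set of $v$ in a neighborhood of $0 \in T_{f^n(x)}M$ with $\tilde{f}_n(v) \in \mathrm{graph}(\sigma_{n+1})$. For $\epsilon$ and $\alpha$ sufficiently small, the contraction on $E^s$ and expansion on $E^u$, combined with the $C^1$ closeness of $\tilde{f}_n$ to its linear part near $0$, imply that each $\sigma'_n$ is well-defined and Lipschitz with constant $\alpha$, and that $\Gamma$ is a contraction on $\mathcal{X}$ with rate comparable to $\lambda$. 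The unique fixed point gives $(\sigma_n)_{n \geq 0}$, and I would set $W^s_\epsilon(x) := \exp_x(\mathrm{graph}(\sigma_x))$. The inclusion $f(W^s_\epsilon(x)) \subset W^s_\epsilon(f(x))$ is then immediate from the construction, and the exponential contraction $d(f^n(x), f^n(y)) \leq C\lambda^n d(x,y)$ for $y \in W^s_\epsilon(x)$ follows because $\tilde{f}_n$-images of graph points contract at rate $\lambda$ along $E^s$ while staying on the graph.

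Finally, for $C^r$ smoothness and continuous (indeed $C^r$) dependence on $x$, I would apply the Hirsch--Pugh section/fibre contraction theorem: the graph transform lifts to a fibre contraction on the bundle of $r$-jets over the already-established $C^0$ fixed point, so the fixed point is automatically $C^r$. The tangency $T_xW^s_\epsilon(x) = E^s_x$ then drops out from $\mathrm{D}\sigma_x(0) = 0$. Continuous dependence of $(\sigma_n)_n$ on $x \in V_p$, hence the continuous map $\Phi_p : V_p \to \mathrm{Emb}^r(W^s_\epsilon(p), M)$, follows from the uniform contraction principle with $x$ as a parameter. The main obstacle is precisely this upgrade from Lipschitz/$C^0$ control to $C^r$: the naive graph transform only gives $C^0$ regularity of the fixed section directly, and one must introduce the derivatives as extra coordinates on a jet bundle and verify that the induced higher-order graph transforms remain fibrewise contractions over an already-Lipschitz base---this is where the technical heart of the proof lies.
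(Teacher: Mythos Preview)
Your outline is essentially the standard Hadamard--Perron graph transform proof, and as a sketch it is correct; the steps you identify (local charts along the orbit, contraction on a space of Lipschitz graph sequences, fibre contraction for the $C^r$ upgrade, uniform contraction with parameter for continuous dependence) are exactly the right ones, and your identification of the $C^r$ regularity step as the technical crux is accurate.

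However, the paper does not prove this proposition at all. It is quoted as a classical result, with the proof referred to Shub, \emph{Global Stability of Dynamical Systems}, Theorem~6.2, and Katok--Hasselblatt, Theorem~6.4.9. So there is nothing to compare your argument against: the paper's ``proof'' is a citation. Your sketch is in fact the approach taken in those references (Shub uses the graph transform and fibre contraction; Katok--Hasselblatt give both the Hadamard graph transform and the Perron--Irwin method), so you are reproducing the literature rather than diverging from it. For the purposes of this paper, a one-line citation would suffice.
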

The Unstable manifold theorem is the same but applied to the unstable and local unstable manifolds.
\subsection{Local stable leaves and foliations}\label{sec:stable_leaves}
By a $C^r$ foliation $\mathcal{F}$ on a set $\Lambda \in M$, we mean a family of $C^r$ pairwise disjoint immersed submanifolds with constant dimension, called the leaves of $\mathcal{F}$, such that every leaf intersects $\Lambda$ and every point in $\Lambda$ is contained in a leaf.  It can be deduced from Proposition~\ref{thm:Stable_Manifold}, that the global stable and unstable manifolds form continuous (in the sense given in the Proposition~\ref{thm:Stable_Manifold}) $C^r$ foliations, which we denote by $\mathscr{F}^s$ and $\mathscr{F}^u$ respectively. 

 Let $\mathcal{F}^s_{\text{loc}}$ be the family of local stable manifolds of points in $\Lambda$ (as in Proposition~\ref{thm:Stable_Manifold}) which we will refer to as \emph{local stable leaves}, and $\mathcal{F}^s_{\text{loc}}$ will be know as a \emph{local stable foliation}. We note that this is not a foliation in the sense defined above because the leaves may not be pairwise disjoint. However, the leaves of $\mathcal{F}^s_{\text{loc}}$ are embedded disks that vary continuously as per Proposition~\ref{thm:Stable_Manifold}. We denote by $\mathcal{F}^s_{\text{loc}}(x)$ the leaf through $x$.

By Proposition~\ref{thm:Stable_Manifold}, the leaves are as smooth as the dynamics. However this smoothness concerns only the direction of the leaf and tells us nothing about the transverse smoothness of the foliation. Projections along the leaves of the foliations from one transverse section to another {\em (Poincar\'{e} maps)} may fail to be differentiable regardless how smooth the diffeomorphism is. However, some transverse regularity does exists, namely if the diffeomorphism is $C^2$, then Poincar\'{e} maps are H\"{o}lder continuous.
Moreover $\mathcal{F}^s_{\text{loc}}$ is {\em absolutely continuous}.
This means the following.
Suppose first that $\mathcal{F}^s_{\text{loc}}$ forms a (measurable) partition of $Q$.
We can disintegrate (using a result due to Rohlin, see \cite{Viana1997}, Appendix A) the Riemannian measure $m$ with respect to $\mathcal{F}^s_{\text{loc}}$. That is, for $\gamma \in \mathcal{F}^s_{\text{loc}} $ there exists conditional probabilities $\rho_{\gamma}$ supported on $\gamma$ such that
\[\int_Q \psi \idd m=  \int \Big( \int (\psi|_\gamma) \, dp_{\gamma} \Big)d\tilde{m}, \] for every integrable function $\psi$, where $\tilde{m}$ is the quotient measure on the space of leaves, defined by $\tilde{m}(A) = m(\cup\{\gamma; \gamma \in A\})$.
Absolute continuity of the local stable foliation now means that there exists a positive function 
$H: Q \to \mathbb{R}$ such that $\log H$ is $(a_0, \nu_0)$-H\"{o}lder continuous for some constants $a_0 > 0$ and $0 < \nu_0 \leq 1$ and we may take
\begin{equation}\label{eq:H_gamma}
\dd p_{\gamma} = (H|_\gamma) \idd m_{\gamma}:= H_{\gamma},
\end{equation}
where $m_{\gamma}$ denotes the smooth measure induced on $\gamma$ by the Riemannian metric.

The result holds in the more general uniformly hyperbolic case, where  $\mathcal{F}^s_{\text{loc}}$ doesn't necessarily form a partition of the manifold. In this case, we would need to first cover the (compact) manifold with a finite number of sufficiently small open sets and construct a measurable partition on each open set. We can then disintegrate on each set using Rohlin once more. The existence of a measurable partition follows from the continuity in $x$ of the local stable manifolds $W_{\epsilon}(x)$ from Proposition~\ref{thm:Stable_Manifold}. To see this, suppose we cover the manifold with open balls $U = B_{\epsilon}(x)$ for sufficiently small $\epsilon$ and we let $\Sigma$ be transverse to $W_{\epsilon}^s(x)$ at $x$. Then the family $\{\Phi_x(x')W_{\epsilon}^s (x) \cap U ; x' \in \Sigma\}$ forms a measurable partition of $U$, where $\Phi_x$ is the continuous map from Proposition~\ref{thm:Stable_Manifold}.
The existence of the map $\Phi_x$ is a key property that in particular gives us the following useful construction. Given any two nearby stable leaves $\gamma$ and $\delta$, there is a $C^2$ diffeomorphism  
\begin{equation}\label{eq:pi}
\pi:=\pi(\delta, \gamma): \delta \to \gamma,
\end{equation}
$C^2$ close to the inclusion map of $\delta$ in $M$. In the case of the Solenoid we can take $\pi$ to be the projection along the leaves of the horizontal foliation $\{S^1 \times \{x\}; z \in B^2\}.$ In \cite{Viana1997}, the following statement about $\pi$ are shown:
\begin{lemma}\label{lem:4.10}
  Let $\gamma, \delta$ be two nearby stable leaves, and let $\gamma_i$ and $\delta_i$ with $i = 1,..,n$ be the preimages of $\gamma$ and $\delta$ respectively, numbered so that $\gamma_i$ is the closest to $\delta_i$ and $\pi_i = \pi(\delta_i, \gamma_i)$.
  Let $ d(x,y) $ denote the distance between two points on the same horizontal leaf measured along the leaf, and write $\dtpi = |\det D\pi|$.
  Then there are constants $a_0>0$, $\nu_0>0$ and $\lambda_u < 1$, depending only on $f$, such that
  \begin{enumerate}
    \renewcommand{\theenumi}{P\arabic{enumi}}
    \item $\pi$ and $\log \dtpi$ are $a_0$-Lipschitz maps,
    \item $ \log \dtpi(y) \leq a_0 d(y, \pi(y))^{\nu_0}$ for every $y \in \delta$,
    \item $d(x, \pi_j(x)) \leq \lambda_ud(f(x), \pi(f(x)))$ for every $x \in \delta_j$ and $j=1,..,n$.
      \end{enumerate}
  \end{lemma}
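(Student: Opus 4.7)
The approach is that of~\cite{Viana1997}, exploiting the $f$-invariance of the unstable foliation, uniform unstable expansion, and H\"{o}lder regularity of the stable Jacobian of $f$. I treat the three properties in reverse order: (P3) supplies the geometric contraction under preimage iteration, (P1) is essentially a regularity and compactness statement, and (P2) then follows from a standard bounded-distortion telescoping.

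For (P3), since $\delta_j, \gamma_j$ are preimage branches of $\delta, \gamma$ under $f$ and $\pi_j$ is the unstable holonomy between them, the $f$-invariance of the unstable foliation yields the commutation $f \circ \pi_j = \pi \circ f$ on $\delta_j$. For $x \in \delta_j$, the points $x$ and $\pi_j(x)$ lie on a common unstable arc, whose image under $f$ is the unstable arc joining $f(x)$ and $\pi(f(x))$. By uniform unstable expansion, $f$ stretches such arcs by a factor of at least $\lambda_u^{-1} > 1$, giving $d(x, \pi_j(x)) \leq \lambda_u \, d(f(x), \pi(f(x)))$.

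Property (P1) records the regularity of the unstable holonomy. Since $f$ is $C^2$ and stable/unstable leaves are $C^2$ submanifolds depending continuously on their base points, the holonomy $\pi$ between two nearby stable leaves admits a uniform Lipschitz bound on compact subsets of $\Lambda$, and so does $\log \dtpi$; in the Solenoid example, $\pi$ is literally the projection along the horizontal direction and the bound is immediate. The constant $a_0$ is obtained by a compactness argument encompassing both $\pi$ and the finitely many preimage holonomies $\pi_j$.

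For (P2), applying the chain rule to the commutation relation at $x \in \delta_j$ and writing $J^s := |\det Df|_{E^s}|$ gives
\[
\log \dtpi(f(x)) - \log \dtpi_j(x) = \log J^s(\pi_j(x)) - \log J^s(x).
\]
Iterating along a sequence of nearest preimages $x^{(0)} = y$, $x^{(k+1)} \in \delta^{(k+1)}$ with $f(x^{(k+1)}) = x^{(k)}$ produces the telescope
\[
\log \dtpi(y) = \log \dtpi^{(n)}(x^{(n)}) + \sum_{k=1}^n \Bigl( \log J^s(\pi^{(k)}(x^{(k)})) - \log J^s(x^{(k)}) \Bigr).
\]
Now $\log J^s$ is H\"{o}lder continuous with some exponent $\nu_0$ (since $f \in C^2$ and $E^s$ is H\"{o}lder over $\Lambda$), so each summand is bounded by a constant times $d(x^{(k)}, \pi^{(k)}(x^{(k)}))^{\nu_0}$. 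Iterating (P3), $d(x^{(k)}, \pi^{(k)}(x^{(k)})) \leq \lambda_u^k\, d(y, \pi(y))$, so the geometric series is dominated by $C\, d(y, \pi(y))^{\nu_0}$. The residual term $\log \dtpi^{(n)}(x^{(n)})$ vanishes as $n \to \infty$ by (P1), since the two preimages coalesce. This yields (P2) with a uniform $a_0$ and exponent $\nu_0$. The main obstacle, and the reason this estimate is nontrivial, is establishing H\"{o}lder continuity of $\log J^s$ on $\Lambda$, which reduces to H\"{o}lder regularity of the stable distribution $E^s$ — a classical but delicate fact for $C^2$ uniformly hyperbolic systems.
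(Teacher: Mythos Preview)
Your proposal rests on a misidentification of $\pi$. In the paper (following \cite{Viana1997}), $\pi$ is \emph{not} the unstable holonomy: it is a $C^2$ projection along a fixed smooth foliation transverse to the stable leaves (the horizontal foliation $S^1\times\{z\}$ in the Solenoid). That foliation is not $f$-invariant, so the commutation $f\circ\pi_j=\pi\circ f$ you invoke does not hold, and with it both your arguments for (P3) and (P2) break down. In particular, the chain-rule identity $\log\dtpi(f(x))-\log\dtpi_j(x)=\log J^s(\pi_j(x))-\log J^s(x)$ is false for this $\pi$, so the telescoping sum does not represent $\log\dtpi(y)$. Note also that if $\pi$ \emph{were} the genuine unstable holonomy, (P1) would typically fail: unstable holonomies for $C^2$ hyperbolic maps are in general only H\"older, so one could not expect $\log\dtpi$ to be Lipschitz.

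The paper's own justification is quite different and much shorter. For (P1) it simply observes that $\gamma,\delta$ are graphs of $C^2$ maps and the embedding of Proposition~\ref{thm:Stable_Manifold} is $C^2$, so $\pi$ and $\log\dtpi$ are uniformly Lipschitz by compactness. For (P2) it appeals directly to H\"older continuity of the tangent spaces $E^s_x$: since $\pi$ projects along a \emph{smooth} foliation, $D\pi$ at a point is determined by the angle between $T\delta$ and $T\gamma$ relative to the fixed transverse direction, and H\"older variation of $E^s$ gives $|\log\dtpi(y)|\le a_0\,d(y,\pi(y))^{\nu_0}$ immediately --- no backward-orbit telescope is needed. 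For (P3) the paper defers to \cite{Viana1997}; the actual argument there uses that the smooth transverse foliation is uniformly close to the unstable direction, so $f$ expands transverse distances by a definite factor, but one must compare $f(\pi_j(x))$ with $\pi(f(x))$ (two distinct points on $\gamma$) rather than assume they coincide. Your outline would become correct if you replaced exact commutation by this approximate-expansion argument, and replaced the telescope for (P2) by the direct H\"older-distribution estimate.
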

Here, P1~holds because the embedding of Proposition~\ref{thm:Stable_Manifold} is $C^2$ and $\gamma$ and $\delta$ are graphs of $C^2$ maps. Further, P2~is true because tangent spaces to leaves of $ \mathcal{F}^s_{\text{loc}} $ are H\"{o}lder continuous (see Section 2.2 of \cite{Viana2004}). A proof of P3~is given in \cite{Viana1997}.  

We define a distance between two nearby leaves as
\[d(\gamma, \delta): = \sup \{d(x, \pi(x)); x \in \delta\}.\] 
As a direct consequence of P3~we have that the map induced by $f$ on the space of local stable leaves is expanding. That is,
\begin{equation}\label{eq:exp_stable_leaves}
d(\gamma_i, \delta_i)\lambda_u^{-1} \leq  d(\gamma, \delta).
\end{equation}

\subsection{Cones and the Hilbert projective metric}\label{sec:cones}
Let $E$ be a vector space.
A {\em convex cone} in $E$ is a subset $C \subset E\backslash \{0\}$ satisfying $t_1v_1 + t_2v_2 \in C$ for any $t_1, t_2 >0$ and $v_1, v_2 \in C$
The {\em ray closure} $\overline{C}$ of $C$ is the set of all vectors $w \in E$ for which there exists a $v \in C$ with the property that $w + \epsilon v \in C$ for all $\epsilon > 0$.
A cone is {\em proper} if $\overline{C} \cap -\overline{C} = \{0\}$.
\begin{definition}[Hilbert Projective Metric]
  For $v \in E$ we write $0 \cleq v $ if $v \in C$.
  For $v,w \in E$ we write $v \cleq w$ if $0 \cleq w - v $.
  Given $v_1, v_2 \in C$ we define
  \beqn{equ:2.100}
  \begin{split}
    \alpha(v_1,v_2) & := \sup\{t>0; t v_1 \cleq v_2 \}, \\
    \beta(v_1,v_2) & := \inf\{s>0; v_1 \cleq t v_2\},
  \end{split}
  \eeq
  and the {\em projective metric}
  \beqn{equ:2.110}
  \theta(v_1,v_2) := \log{\frac{\beta(v_1,v_2)}{\alpha(v_1,v_2)}}
   = \inf \{ \log \left(\frac{s}{t} \right); t v_1 \cleq v_2 \cleq s v_1\}. 
  \eeq
with $\theta(v_1,v_2) = +\infty$ if $\alpha(v_1,v_2) = 0$ or $\beta(v_1,v_2)=+ \infty$.
\end{definition}
If the cone $C$ is proper, the following proposition justifies the term ``projective metric'', although it needs to be kept in mind that $\theta$ may be infinite.
\begin{proposition}
  If $C$ is a proper cone, then $ \theta: C \times C \to [0, +\infty] $ and furthermore
  \begin{enumerate}
  \item  $\theta(v_1,v_2) = \theta(v_2, v_1)$
  \item $\theta(v_1,v_3) \leq \theta(v_2, v_3) + \theta(v_1, v_2)$
  \item $\theta(v_1,v_2) = 0 \iff v_1 = tv_2$ for some $t>0$.
    \end{enumerate}
\end{proposition}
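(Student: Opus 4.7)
The plan is to verify the claim $\theta(v_1,v_2) \in [0,+\infty]$ together with the three listed properties, working directly from the inf/sup characterisation. Non-negativity reduces to $\alpha(v_1,v_2) \leq \beta(v_1,v_2)$: if $t v_1 \preceq v_2 \preceq s v_1$, then $(s-t) v_1 = (v_2 - t v_1) + (s v_1 - v_2) \in \overline{C}$ by convexity of $\overline{C}$; were $s < t$ possible, $(t-s) v_1 \in C$ would also lie in $-\overline{C}$, forcing $v_1 \in \overline{C} \cap -\overline{C} = \{0\}$ by properness, contradicting $v_1 \in C$. Hence $s \geq t$ for every admissible pair, so $\theta \geq 0$. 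For symmetry, the substitution $t \mapsto 1/t$ turns the condition $t v_2 \preceq v_1$ into $v_2 \preceq (1/t) v_1$ (multiplying the defining relation by $1/t > 0$, which preserves $\overline{C}$), yielding $\alpha(v_2,v_1) = 1/\beta(v_1,v_2)$ and $\beta(v_2,v_1) = 1/\alpha(v_1,v_2)$, so the ratios $\beta/\alpha$ coincide.

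For the triangle inequality, I would concatenate bounds. Given admissible $t_{12} v_1 \preceq v_2 \preceq s_{12} v_1$ and $t_{23} v_2 \preceq v_3 \preceq s_{23} v_2$, multiplying through by positive scalars (which map $\overline{C}$ into itself) produces the chain $t_{12} t_{23} v_1 \preceq t_{23} v_2 \preceq v_3 \preceq s_{23} v_2 \preceq s_{12} s_{23} v_1$. Therefore $\log(s_{12}/t_{12}) + \log(s_{23}/t_{23})$ is admissible in the infimum defining $\theta(v_1,v_3)$, and infimising separately over the two pairs gives the claim.

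The forward direction of the zero characterisation is a direct computation: if $v_1 = c v_2$ with $c > 0$, then $\{t > 0 : t v_1 \preceq v_2\} = \{t : (1 - tc) v_2 \in \overline{C}\} = (0, 1/c]$, using that for $v_2 \in C$ one has $c' v_2 \in \overline{C}$ iff $c' \geq 0$ (a consequence of properness, since for $c' < 0$ the element $-|c'|v_2$ lies in $-\overline{C}\setminus\{0\}$). This yields $\alpha = 1/c = \beta$, so $\theta = 0$. The converse is the main technical obstacle: assuming $\theta(v_1,v_2) = 0$, pick sequences $t_n \nearrow r$ and $s_n \searrow r$, where $r := \alpha = \beta$, with $v_2 - t_n v_1, s_n v_1 - v_2 \in \overline{C}$. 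I would then show that both $v_2 - r v_1$ and $r v_1 - v_2$ lie in $\overline{C}$; the delicate step is verifying closure under such limits, obtained from the algebraic property $C + \overline{C} \subset \overline{C}$ applied to identities like $v_2 - r v_1 + \epsilon v_1 = (v_2 - t_n v_1) + (t_n - r + \epsilon) v_1$ for $n$ so large that $t_n > r - \epsilon$. Properness then forces $v_2 - r v_1 \in \overline{C} \cap -\overline{C} = \{0\}$, so $v_2 = r v_1$ as required.
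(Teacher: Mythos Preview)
The paper does not actually prove this proposition; it simply cites \cite{Viana1997}. Your argument is essentially the standard one found there, so in that sense you are aligned with the paper's intended proof.

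One point worth tightening: the paper defines $v \preceq w$ by $w - v \in C$, \emph{not} $w - v \in \overline{C}$. With this convention your converse argument for item~(3) becomes cleaner than you present it. Since $\alpha = r$ is the supremum of $\{t>0 : v_2 - t v_1 \in C\}$, for every $\epsilon>0$ there is $t \in (r-\epsilon, r]$ with $v_2 - t v_1 \in C$; then
\[
(v_2 - r v_1) + \epsilon v_1 = (v_2 - t v_1) + (t - r + \epsilon) v_1 \in C + C \subset C,
\]
so $v_2 - r v_1 \in \overline{C}$ directly from the definition of the ray closure with witness $v_1$. This sidesteps the issue you flag as ``delicate'': you do not need $C + \overline{C} \subset \overline{C}$, and you avoid the (genuine) worry that $w + \epsilon v \in \overline{C}$ for all $\epsilon>0$ does not obviously imply $w \in \overline{C}$ under the paper's definition. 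The symmetric computation gives $r v_1 - v_2 \in \overline{C}$, and properness finishes as you say.
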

For a proof see e.g.~\cite{Viana1997}.
We note that in the first two items of the above Proposition, if the left hand side is equal to infinity then so is the right hand side.
Let $E_1$ and $E_2$ be two vector spaces and $C_1, C_2$ be proper convex cones in each space respectively.
Let $L: E_1 \to E_2$ be a linear operator such that $L(C_1) \subset C_2$.
Then it is easy to see that $\theta_1(v_1,v_2) \geq \theta_2(L(v_1), L(v_2))$ and thus $L$ is a contraction.
The following key proposition, proof of which can be found in, for example, \cite{Viana1997}, provides a condition for the contraction to be strict.
\begin{proposition} \label{prop:finite_diam}
Let $D := \sup\{\theta_2(Lv_1, Lv_2); v_1, v_2 \in C_1\}$. If $D < +\infty$ then \[\theta_2(Lv_1, Lv_2) \leq (1-e^{-D})\theta_1(v_1, v_2)\] for all $v_1, v_2 \in C_1.$
\end{proposition}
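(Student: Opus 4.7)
The plan is to leverage the hypothesis that $L(C_1)$ has bounded $\theta_2$-diameter by passing to a pair of auxiliary elements $w, w' \in C_2$ whose positions encode the images $Lv_1, Lv_2$ via simple linear combinations. Assume $\theta_1(v_1, v_2) < \infty$, else the stated bound is trivial; set $\alpha = \alpha(v_1, v_2)$, $\beta = \beta(v_1, v_2)$ and $R := \beta/\alpha$, so that $\theta_1(v_1, v_2) = \log R$.

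First, I would introduce
\[ w := Lv_2 - \alpha Lv_1, \qquad w' := \beta Lv_1 - Lv_2. \]
Since $v_2 - \alpha v_1$ and $\beta v_1 - v_2$ lie in $\overline{C_1}$ by definition of $\alpha, \beta$, both $w$ and $w'$ lie in $\overline{C_2}$. A standard perturbation argument---replacing $\alpha$ by some $\alpha' < \alpha$ and $\beta$ by some $\beta' > \beta$ so that $v_2 - \alpha' v_1, \beta' v_1 - v_2 \in C_1$ strictly, and letting $\alpha' \uparrow \alpha$, $\beta' \downarrow \beta$ at the end---allows me to assume $w, w' \in C_2$ (not just their closure). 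The algebraic identities $(\beta - \alpha)Lv_1 = w + w'$ and $(\beta - \alpha)Lv_2 = \beta w + \alpha w'$ then express $Lv_1, Lv_2$ as explicit positive combinations of $w, w'$.

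Next, the diameter hypothesis gives $\theta_2(w, w') \leq D$. Writing $a := \alpha(w, w')$ and $b := \beta(w, w')$, we have $b/a \leq e^D$. A direct case analysis of the defining suprema and infima for $\alpha(Lv_1, Lv_2)$ and $\beta(Lv_1, Lv_2)$---splitting the free parameter at the value $\alpha$ (resp.\ $\beta$) and using $\alpha(w',w) = 1/\beta(w,w')$---yields
\[ \alpha(Lv_1, Lv_2) = \frac{\beta + b\alpha}{1 + b}, \qquad \beta(Lv_1, Lv_2) = \frac{\beta + a\alpha}{1 + a}, \]
and hence, after dividing through by $\alpha$,
\[ \theta_2(Lv_1, Lv_2) \leq \log\frac{(R+a)(1+b)}{(1+a)(R+b)}. \]

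It then remains to verify the calculus estimate
\[ \log\frac{(R+a)(1+b)}{(1+a)(R+b)} \leq (1 - e^{-D}) \log R \]
for $R \geq 1$ and $b/a \leq e^D$. The left-hand side is monotone increasing in $b$ for fixed $a$, so the binding case is $b = e^D a$; optimising then over $a$ produces a unique interior critical point at $a = \sqrt{R}\, e^{-D/2}$, at which the expression simplifies (via the hyperbolic identity $\tfrac{1+xy}{x+y} = \cosh(\tfrac{p+q}{2})/\cosh(\tfrac{p-q}{2})$ with $x = e^{p/2}, y = e^{q/2}$) to the sharp Birkhoff value $\tanh(D/4)\cdot \log R$. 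The elementary factorisations $1 - e^{-D} = (1 - e^{-D/2})(1 + e^{-D/2})$ and $\tanh(D/4) = (1 - e^{-D/2})/(1 + e^{-D/2})$ immediately give $\tanh(D/4) \leq 1 - e^{-D}$, completing the proof. The main obstacle is this two-parameter optimisation: although elementary, it requires genuine work because the bound on $\theta_2(Lv_1, Lv_2)$ depends on $a$ and $b$ separately, not only on their ratio. Everything else is algebraic bookkeeping essentially forced by the decomposition into $w, w'$.
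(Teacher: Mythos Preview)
The paper does not actually supply a proof of this proposition; it simply refers the reader to \cite{Viana1997}. Your argument is the classical Birkhoff proof and is correct: the decomposition $w = L(v_2 - \alpha v_1)$, $w' = L(\beta v_1 - v_2)$, the explicit formulae for $\alpha(Lv_1,Lv_2)$ and $\beta(Lv_1,Lv_2)$ in terms of $a,b,\alpha,\beta$, and the optimisation leading to the sharp factor $\tanh(D/4)$ are all standard and sound. In fact you establish the stronger Birkhoff--Hopf bound $\theta_2(Lv_1,Lv_2)\le \tanh(D/4)\,\theta_1(v_1,v_2)$ and then note $\tanh(D/4)\le 1-e^{-D}$, which is more than the paper (or its cited reference) asks for. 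The only point worth tightening is the perturbation step: with the paper's convention that $v\preceq w$ means $w-v\in C$ (not $\overline C$), one has $v_2-\alpha' v_1\in C_1$ for every $\alpha'<\alpha$ directly, so $w,w'\in L(C_1)$ and $\theta_2(w,w')\le D$ is immediate after perturbing; you handle this correctly but could state it a touch more explicitly.
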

The quantity $D$ will be referred to as the {\em diameter} of $L(C_1)$ in $C_2$ or as the $\theta_2$-{\em diameter} of $L(C_1)$.
There are two examples of proper convex cones which are useful in what will follow.
For the first example, let $X$ be a compact metric space. Denote by \[C_+:=\{\phi \in C^0(X);\ \phi(x)>0 \ \text{for all}\  x \in X \},\] and by $\theta_+$ the Hilbert projective metric on $C_+$. Then $C_+$ is a proper convex cone, and for $\phi_1, \phi_2 \in C_+$, we have
\begin{equation}\label{eq:def_theta_+}
\theta_+ (\phi_1, \phi_2)= \log \frac{\sup\{\phi_2/\phi_1\}}{\inf\{\phi_2/\phi_1\}}.
\end{equation}
For the second example, denote by 
$C(a, \nu)$, the set of all strictly positive functions $\phi$ on $X$ such that $\log \phi$ is $(a, \nu)$-H\"{o}lder continuous, that is, there exist positive real constants $a$ and $\nu$ such that for all $x,y \in X$, it holds that
\[\frac{\phi(x)}{\phi(y)} \leq e^{ad(x,y)^{\nu}}.\]
For proof that this is a proper convex cone, see e.g. \cite{Viana1997}, Example 2.3. Then
$\alpha(\phi_1, \phi_2)$ is given by
\begin{equation}\label{eq:def_Holder_theta}
\inf \Big\{\frac{\phi_2}{\phi_1}(x), \, \frac{e^{ad(x,y)^{\nu}}\phi_2(x) - \phi_2(y)}{e^{ad(x,y)^{\nu}}\phi_1(x) - \phi_1(y)}; x,y \in X, x \neq y\Big\},
\end{equation}
and similarly for $\beta$ but with supremum instead of infimum.
\subsection{Definition and properties of cones~$\scA$, $\scC$ and $\cD$}
As mentioned in Section~\ref{sec:hyper} and as in \cite{Viana1997}, we aim to average the action of the operator $\scL_{\omega}$ on local stable leaves $ \gamma\in \mathscr{F}^s_{\text{loc}}$, as defined in Section~\ref{sec:stable_leaves}. We assume, without loss of generality, that $  m(Q)= 1$. Let $m_{\gamma}$ denote the smooth measure induced on $\gamma$ by the Riemannian metric. The averaging is done with respect to a whole class of measures given by convex cones of log-H\"{o}lder continuous densities w.r.t. $m_{\gamma}$ on $\gamma$ which we define below.

\begin{definition}
We denote the convex cone of $(a, \mu)$ log-H\"{o}lder densities on $\gamma$ by
\begin{equation}\label{eq:cone_D}
\mathcal{D}(a, \mu, \gamma) := \{\rho: \gamma \to \mathbb{R};\, \rho(x) > 0 \: \text{and} \: \rho(x) \leq \rho(y)e^{ad(x,y)^{\mu}},\: \forall x, y \in \gamma\},
\end{equation}
for some constants $a>0, 0<\mu \leq 1$, for each local stable leaf $ \gamma\in \mathscr{F}^s_{\text{loc}}$. We denote by $\theta_a$ the corresponding projective metric. 
\end{definition}

For $\rho \in \mathcal{D}(a, \mu, \gamma)$ and some $\phi: Q \to \mathbb{R}$ we define $ \int_{\gamma} \phi \rho $ as the integral of $\phi$ with respect to the measure $\rho m_{\gamma}$.

We are now ready to define the convex cone of bounded densities on which the filtering operator $\scL_y$ will act. We firstly state the conditions in the forms of separate cones $\mathscr{A}$ and $\mathscr{C}$ and we analyse the action of the operator on these sets separately, before we deduce the form of the invariant cone. We note that cones $\mathscr{A}$ and $\mathscr{C}$ correspond to conditions (A) and (C) of \cite{Viana1997} respectively, while condition (B)  of \cite{Viana1997}, which is needed to show the invariance, turns out to be a consequence of condition (A). This is demonstrated in Lemma~\ref{lemma:conditionB}. This simplifies the cone and in turn, the Hilbert metric of the cone, which makes the analysis somewhat more transparent.

We can now introduce the cone of densities we plan to work with.
\begin{definition}\label{def:cone}
For some $a > 0$ and $0 < \mu \leq 1$, we define 
\begin{equation}\label{eq:set_A}
  \mathscr{A}(a, \mu)
  := \Big\{\phi: Q \to \mathbb{R};
  \int_\gamma\phi \rho > 0 \,
  \forall \gamma \in \mathscr{F}^s_{\text{loc}},
  \rho \in \mathcal{D}(a, \mu, \gamma) \Big\}.
\end{equation}
For $a, \mu$ as above and some $c > 0$ and $0 <  \nu \leq 1$, we define $\mathscr{C}(c,a, \mu, \nu)$ as the set of all functions $\phi: Q \to \R$ such that 
\begin{equation}\label{eq:set_C}
    e^{-cd(\gamma, \delta)^\nu} \leq \frac{\int_\gamma\phi \rho}{\int_\delta\phi \,\pi^*\rho} \leq e^{cd(\gamma, \delta)^\nu},
\end{equation}
where $\pi: \delta \to \gamma$ is defined in Section~\ref{sec:stable_leaves} and
\beq{equ:4.10}
\begin{split}
& \gamma \in \mathscr{F}^s_{\text{loc}}, \\
& \pi(\delta)  = \gamma, \\
& \rho \in \mathcal{D}(a, \mu, \gamma), \\
& \pi^*\rho(y) := \rho(\pi(y)) \dtpi(y).
\end{split}
\eeq
Finally, for $a, c, \mu, \nu$ as above and
for $\hat{a} > 0$ and $0 < \hat{\mu} \leq 1$, we define the cone
\[
\mathcal{C}(c,\hat{a}, a, \mu, \nu, \hat{\mu}):= \mathscr{C}(c, a, \mu, \nu) \cap \mathscr{A}(\hat{a}, \hat{\mu}).
\]
\end{definition}
\begin{lemma}
The cone $\cC$ of Definition~\ref{def:cone} is proper and convex.
(see Section~\ref{sec:cones} for definitions for these concepts). 
\end{lemma}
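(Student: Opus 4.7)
The plan is to handle the two claims separately and, for convexity, to work with each factor of the intersection, since an intersection of convex cones is a convex cone.

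For $\mathscr{A}(\hat a,\hat\mu)$ convexity is immediate from linearity of the integral: if $\phi_1,\phi_2\in\mathscr{A}$ and $t_1,t_2>0$, then $\int_\gamma(t_1\phi_1+t_2\phi_2)\rho = t_1\int_\gamma\phi_1\rho + t_2\int_\gamma\phi_2\rho>0$. For $\mathscr{C}(c,a,\mu,\nu)$ I would rewrite the defining ratio bound as the pair of linear inequalities $LB\le A\le UB$ with $A=\int_\gamma\phi\rho$, $B=\int_\delta\phi\,\pi^\ast\rho>0$, $L=e^{-cd(\gamma,\delta)^\nu}$ and $U=e^{cd(\gamma,\delta)^\nu}$. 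Positive linear combinations preserve such inequalities and preserve positivity of $B$, so dividing through recovers the required bound for $t_1\phi_1+t_2\phi_2$; the intersection $\cC=\mathscr{C}\cap\mathscr{A}$ is then a convex cone.

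For properness, the plan is to suppose $w,-w\in\overline{\cC}$ and deduce $w=0$. By definition there exist $v,v'\in\cC$ with $w+\epsilon v\in\cC$ and $-w+\epsilon v'\in\cC$ for every $\epsilon>0$. Testing $w+\epsilon v\in\mathscr{A}$ against any $\gamma\in\mathscr{F}^s_{\text{loc}}$ and any $\rho\in\mathcal{D}(\hat a,\hat\mu,\gamma)$ yields $\int_\gamma w\rho\ge -\epsilon\int_\gamma v\rho$, and letting $\epsilon\downarrow 0$ gives $\int_\gamma w\rho\ge 0$; the symmetric estimate from $-w\in\overline{\cC}$ yields the opposite inequality, so $\int_\gamma w\rho=0$ for every admissible $(\gamma,\rho)$. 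The remaining step is to promote this leafwise vanishing to $w=0$ in the ambient space of densities. Here I would use that $\mathcal{D}(\hat a,\hat\mu,\gamma)$ contains the constant $1$ and all perturbations $1+\epsilon\psi$ with $\psi$ smooth on $\gamma$ and $\epsilon$ small, which is enough to conclude $w|_\gamma=0$ in $L^1(m_\gamma)$ on every leaf; the absolute continuity of the local stable foliation and the disintegration recalled in Equation~\eqref{eq:H_gamma} then upgrade this to $w=0$ $m$-almost everywhere on $Q$.

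The main obstacle I anticipate is exactly this last passage from leafwise vanishing to global vanishing, where the geometry and the measure-theoretic structure of $\mathscr{F}^s_{\text{loc}}$ enter: one has to be slightly careful because $\mathscr{F}^s_{\text{loc}}$ need not literally partition $Q$, so one should invoke the local partitioning by the continuous family $\{\Phi_p(x')W^s_\epsilon(p)\cap U;x'\in\Sigma\}$ described in Section~\ref{sec:stable_leaves} and apply the disintegration on each piece of a finite cover. The convexity verifications and the derivation of $\int_\gamma w\rho=0$ are routine.
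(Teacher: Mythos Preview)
Your proposal is correct and follows essentially the same route as the paper: convexity is obtained from the elementary ratio inequality for positive linear combinations, and properness is reduced to showing that $\int_\gamma w\rho=0$ for all $\gamma$ and $\rho\in\mathcal{D}$, then deducing $w|_\gamma=0$ leafwise and globalising via the absolute continuity of the local stable foliation. The only cosmetic difference is in the leafwise vanishing step, where you use the perturbations $1+\epsilon\psi\in\mathcal{D}$ directly while the paper invokes a lemma of Viana to conclude $\int_\gamma \psi^2\,dm_\gamma=0$; both arguments achieve the same end, and you have even anticipated the partitioning subtlety that the paper flags in a footnote.
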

\begin{proof}
Convexity follows from the following: for any strictly positive constants $a, b, c, d$ with $\frac{a}{b} \leq e^{K}$ and $\frac{c}{d} \leq e^{K}$ for some $K > 0$, we have that \[\frac{t_1a + t_2c}{t_1b + t_2d} = \frac{t_1\frac{a}{b}b + t_2\frac{c}{d}d}{t_1b + t_2d} \leq \frac{e^Kt_1b + e^Kt_2d}{t_1b + t_2d} = e^K. \]
To show that $\mathcal{C}$ is proper, we first note that $\overline{\mathcal{C}} \cap -\overline{\mathcal{C}} \,  \subset \, \overline{\mathscr{A}} \cap-\overline{\mathscr{A}}$ and so it is sufficient to show that $\overline{\mathscr{A}} \cap-\overline{\mathscr{A}} = \{0\}.$
Suppose that $\psi \in \overline{\mathscr{A}}$. Then, there exists $\phi \in \mathscr{A}$ and a sequence $t_n \to 0$ as $n \to \infty$ such that $\int_{\gamma}(\psi + t_n\phi)\rho >0$ for all $t_n, \gamma$ and $\rho$. Taking $t_n \to 0$, this gives $\int_{\gamma} \psi\rho \geq 0$.
If $\psi \in -\overline{\mathscr{A}}$, as well, then
$\int_{\gamma}-\psi\rho \geq 0$ for all $\gamma, \rho$ by an analogous argument.
Therefore $\int_{\gamma}\psi\rho = 0$ for all $\rho, \gamma$.
By the argument of Lemma 4.3,~\cite{Viana2004} we can conclude that
$\int_{\gamma} \psi^2 \idd m_{\gamma} = 0$ so that $\psi|\gamma = 0$ for $m_{\gamma}$-almost all $x$ for all $\gamma$.
To conclude that $\psi(x) = 0$ for $m$-almost all $x$, we employ%
\footnote{The proof of properness in~\cite{Viana2004} does not include this step but the argument seems incomplete.}
absolute continuity of the local stable foliation which implies the existence of $(a_0, \nu_0)$-$\log$-H\"{o}lder disintegration as discussed in Subsection~\ref{sec:stable_leaves} so that
\begin{equation}
\int \psi^2 \idd m = \int \Big(\int_{\gamma} \psi^2 \, H_{\gamma} \, \idd m_{\gamma} \Big)\,  d\tilde{m}(\gamma).
\end{equation}
Since the inner integral is zero for all $\gamma$ we can deduce from the above that $\psi(x) = 0$ for $m$-almost all $x$.
\end{proof}
The definition of the cone $\scC$ requires that integrals $\int_{\gamma} \phi \rho$ of its elements $\phi$ over local stable leaves are, roughly speaking, H\"{o}lder continuous with respect to $\gamma$.
In~\cite{Viana1997}, another condition is imposed, requiring Lipschitz continuity with respect to $\rho$ (see also~\cite{Liverani1995}).
The next lemma shows that this condition actually holds for any function in the set $ \mathscr{A}(a, \mu) $ and hence need not be required as an additional condition defining the cone.
\begin{lemma}\label{lemma:conditionB}
Let $\phi \in \mathscr{A}(a, \mu)$. Then for any $b\geq 1$, $ \gamma \in \mathscr{F}^s_{\text{loc}} $, and $\rho_1, \rho_2 \in \mathcal{D}(a, \mu, \gamma)$, it holds that \[\frac{\int_\gamma\phi \rho_1}{\int_\gamma\phi \rho_2} \leq e^{b\theta_a(\rho_1, \rho_2)}\frac{\int_{\gamma} \rho_1}{\int_{\gamma} \rho_2}.\]
\end{lemma}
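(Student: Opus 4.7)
The plan is to recognize $\Phi \colon \rho \mapsto \int_\gamma \phi \rho$ and $\Psi \colon \rho \mapsto \int_\gamma \rho$ as two strictly positive linear functionals on the cone $\mathcal{D}(a,\mu,\gamma)$. Positivity of $\Phi$ is precisely the defining condition of $\phi \in \mathscr{A}(a,\mu)$, and positivity of $\Psi$ is immediate. The Hilbert projective metric $\theta_a(\rho_1,\rho_2)$ equals $\log(\beta/\alpha)$ where $\alpha = \alpha(\rho_1,\rho_2)$ and $\beta = \beta(\rho_1,\rho_2)$, and by the definitions of $\alpha, \beta$ we have $t\rho_1 \cleq \rho_2 \cleq s\rho_1$ for every $t < \alpha$ and $s > \beta$ (the degenerate case $\theta_a = \infty$ makes the claim trivial, so assume $0 < \alpha \leq \beta < \infty$ in what follows).

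The key elementary observation is that any strictly positive linear functional $F$ on $\mathcal{D}$ is non-negative on the ray closure $\overline{\mathcal{D}}$: for $w \in \overline{\mathcal{D}}$ and any fixed $v \in \mathcal{D}$, the definition of the ray closure gives $w + \epsilon v \in \mathcal{D}$ for every $\epsilon > 0$, hence $F(w) + \epsilon F(v) > 0$, and sending $\epsilon \to 0$ yields $F(w) \geq 0$. Applying this to $F = \Phi$ and $F = \Psi$ at $w = \rho_2 - t\rho_1 \in \overline{\mathcal{D}}$ and $w = s\rho_1 - \rho_2 \in \overline{\mathcal{D}}$, then letting $t \uparrow \alpha$ and $s \downarrow \beta$, produces
\[\alpha \;\leq\; \frac{\Phi(\rho_2)}{\Phi(\rho_1)} \;\leq\; \beta \qquad \text{and} \qquad \alpha \;\leq\; \frac{\Psi(\rho_2)}{\Psi(\rho_1)} \;\leq\; \beta.\]

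Dividing these two chains of inequalities gives
\[\frac{\Phi(\rho_1)/\Phi(\rho_2)}{\Psi(\rho_1)/\Psi(\rho_2)} \;\leq\; \frac{\beta}{\alpha} \;=\; e^{\theta_a(\rho_1,\rho_2)},\]
which rearranges into the claimed inequality with the optimal constant $b = 1$; the statement for arbitrary $b \geq 1$ is weaker and follows because $\theta_a \geq 0$ forces $e^{\theta_a} \leq e^{b\theta_a}$. I do not anticipate any substantive obstacle: the argument amounts to the familiar observation that the map $\rho \mapsto (\Phi(\rho), \Psi(\rho))$ into the positive orthant of $\mathbb{R}^2$ is a contraction for the respective Hilbert projective metrics, which here is equivalent to the desired bound.
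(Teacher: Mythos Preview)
Your proof is correct and follows essentially the same approach as the paper: both exploit that $\rho \mapsto \int_\gamma \phi\rho$ is a positive linear functional on $\mathcal{D}(a,\mu,\gamma)$, so the defining inequalities $t\rho_1 \cleq \rho_2 \cleq s\rho_1$ pass to the integrals and yield the $\alpha,\beta$ bounds. The only cosmetic difference is that the paper normalizes $\int_\gamma \rho_i = 1$ and then argues $\alpha(\rho_2,\rho_1) \leq 1$ from the pointwise inequality $\alpha\rho_2 \leq \rho_1$, whereas you avoid normalization by treating $\Psi(\rho) = \int_\gamma \rho$ symmetrically as a second positive functional---a slightly cleaner packaging of the same idea.
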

\begin{proof}
Without loss of generality assume that $\int_{\gamma} \rho_1 =\int_{\gamma} \rho_2= 1$.
By definition of the projective metric we have that $\alpha(\rho_2, \rho_1) e^{\theta_a(\rho_1, \rho_2)} = \beta(\rho_2, \rho_1)$. Since $s\rho_2 - \rho_1 \in \mathcal{D}(a, \mu, \gamma)$ for all $ s> \beta$, we have that
\[\int_{\gamma} \phi \, \cdot (\beta(\rho_2, \rho_1)\rho_2 - \rho_1) \geq 0\] so that 
\[\int_{\gamma} \phi \, \cdot (\alpha(\rho_2, \rho_1)e^{\theta_a(\rho_1, \rho_2)}\rho_2 - \rho_1) \geq 0,\] and therefore, since $\int\phi\rho_2 > 0$,
\[\frac{\int_\gamma\phi \rho_1}{\int_\gamma\phi \rho_2} \leq \alpha(\rho_2, \rho_1)e^{\theta_a(\rho_1, \rho_2)}. \]

It remains to show that $\alpha(\rho_2, \rho_1) \leq 1$. This follows from the fact that $\alpha(\rho_2, \rho_1)\rho_2(x) \leq \rho_1(x)$ for all $x \in \gamma$ (see Equation (\ref{eq:def_Holder_theta})) and $\int_{\gamma} \rho_1 =\int_{\gamma} \rho_2= 1$. 
\end{proof}
We finish this section with a few remarks regarding the choice and interpretation of the cones.
\begin{remark}
  In the case of uniformly expanding dynamics~\cite{Viana1997}, one may take sets of positive $\log$--H\"{o}der functions.
  Clearly all such functions are contained in $\mathscr{A}$ and thus Lemma~\ref{lemma:conditionB} would still hold.
  However, in the case of hyperbolic dynamics, the transfer operator maps positive densities into merely non-negative ones, since densities are set to zero outside of the image $f(Q)$ (see Equation~\eqref{eq:transfer_op_diff}). 
  Although it is possible to accommodate such functions within cones of $\log$--H\"{o}der functions, the projective distance between any two such functions will be infinity unless their support is the same.
  To cater for this difficulty, a condition on the support of functions would have to be included in the definition of the cone (or even restricting to functions supported on the attractor).
  This would complicate the analysis, not least by making the cones time dependent.
\end{remark}
\begin{remark}
  It turns out that functions in the cone $\scA$ may be negative.
  For illustration purposes, suppose that $\phi$ is a function on the unit interval $[0,1]$, given by
\[
\begin{cases}
\phi(x) = -\epsilon & \text{if} \ x \in S \\
\phi(x) = M &  \text{otherwise}\\
\end{cases} 
\]
with $\epsilon, M > 0$ and some measurable set $S \subset [0,1]$.
Let $m$ be the Lebesgue measure on $\mathbb{R}$.
Then, 
\begin{align*}
\int_{[0,1]} \phi \rho = \int_{[0,1]\backslash S} \phi \rho + \int_{S} \phi &= M (\int_{[0,1]} \rho -\int_{S} \rho)- \epsilon \int_{S} \rho \\
& = M \int_{[0,1]} \rho - (M+ \epsilon)\int_S \rho.
\end{align*}
Hence $\int_{[0,1]} \phi \rho > 0$ if, and only if,
\begin{equation}\label{eq:M_epsilon}
\frac{M}{M+\epsilon} >  \frac{\int_{S}\rho }{\int_{[0,1]} \rho}.
\end{equation}
The above must hold for all $\rho \in \mathcal{D}(a,\mu, \gamma)$. For any such $\rho$ it holds that
\begin{equation*}
\rho(x) \leq e^{ad(x,y)^{\mu}} \rho(y) \leq e^a\rho(y).
\end{equation*}
By integrating over $x$ in $S$ and $y$ in $[0,1]$ it is easy to see that $M$, $S$ and $\epsilon$ may be chosen so that Equation~\eqref{eq:M_epsilon} is satisfied.
\end{remark}
\subsection{Action of $\scL_{\omega}$ on $\scA$, $\scC$, and $\cD$}
In this section, we clarify the effect of applying the operator $\scL_{\omega}$ to elements of $\scA$, $\scC$, and of $\cD$.
In fact, in case of the cone $\cD$, an operator which in a sense is the dual of $\scL$ will have to be considered.
By Equation (\ref{eq:filterop}) and (\ref{eq:transfer_op_diff}) we have
\begin{align}
\int_{\gamma} (\scL_{\omega}\phi) \rho & = \int_{\gamma} g(\omega,y)(\scP\phi) \rho,\\
& = \sum_{j=1}^{n} \int_{f(\gamma_j)}g(\omega,y)\frac{\phi(f^{-1}(y))\rho(y)}{|\det \DD f(f^{-1}y)|} \label{eq:op_def_2}\\
&=\sum_{j=1}^{n} \int_{\gamma_j}g(\omega,f(x))\cdot\phi(x)\frac{|\det \DD f_{\gamma_j}(x)|\cdot\rho(f(x))}{|\det \DD f(x)|}.\label{eq:op_def_3}
\end{align}
where $n$ is the number of pre-image leaves. We define a new map $\scL_j^{\omega}: \rho \to \rho_j$, with $\rho_j: \gamma_j \to \mathbb{R}$ by
\begin{equation}\label{eq:rho_j}
\scL_j^{\omega}\rho:= \rho_j:=\frac{|\det \DD f_{\gamma_j}|}{|\det \DD f|}(\rho \circ f)(g \circ f),
\end{equation}
so that (\ref{eq:op_def_3}) can be written as
\begin{equation}
\int_{\gamma} (\scL_{\omega}\phi) \rho = \sum_{j=1}^{n} \int_{\gamma_j}\phi \scL_j^{\omega}\rho.
\end{equation}
\paragraph{Action of $\scL_j^{\omega}$ on $\cD$}
First we have to analyse the action of $\scL_j^{\omega}$  on the cones $\cD$.
\begin{lemma}\label{lemma:test_cone}
 Let $a', \mu' > 0$. Then 
 \[ \scL_j^{\omega}\mathcal{D}(a', \mu', \gamma) \subseteq \mathcal{D}(a, \mu, \gamma_j) \ \text{for all} \ a \geq (a' + \bar{G}(\omega))\lambda_s^{\mu} \ \text{and} \ \mu' \geq \mu,\] where \[\bar{G}(\omega) = G(\omega) + (K_1 + K_2)/\lambda_s^{\mu},\] and  $K_1$, $K_2$ and $G(\omega)$ are the Lipschitz constants for $\log|\det \DD f|$, $\log|\det \DD f_{\gamma_j}|$ and $\log g$ respectively, and $0<\lambda_s < 1$ is a uniform bound on the contraction in the stable direction.
\end{lemma}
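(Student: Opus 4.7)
The plan is to unwind the definition of $\scL_j^\omega \rho$, take logs of the ratio $\rho_j(x)/\rho_j(y)$ for $x,y \in \gamma_j$, and bound each piece using the Lipschitz properties of $\log|\det \DD f|$, $\log|\det \DD f_{\gamma_j}|$ and $\log g$, the log-H\"older property of $\rho$, and the stable contraction $d(f(x),f(y))\leq \lambda_s d(x,y)$ along $\gamma_j$.

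Concretely, positivity of $\rho_j$ on $\gamma_j$ is immediate: $\rho>0$ by hypothesis, $g>0$ almost surely by the log-Lipschitz Assumption~\ref{ass:Lipschitz}, and both Jacobian determinants are nonzero because $f$ is a diffeomorphism. For the log-H\"older bound, fix $x,y \in \gamma_j$ and write
\[
\log \frac{\rho_j(x)}{\rho_j(y)} = \log \frac{|\det \DD f_{\gamma_j}(x)|}{|\det \DD f_{\gamma_j}(y)|} - \log\frac{|\det \DD f(x)|}{|\det \DD f(y)|} + \log \frac{\rho(f(x))}{\rho(f(y))} + \log \frac{g(f(x))}{g(f(y))}.
\]
The first two terms are bounded by $(K_1+K_2)\,d(x,y)$; the third by $a'\,d(f(x),f(y))^{\mu'}\leq a'\lambda_s^{\mu'}d(x,y)^{\mu'}$ since $\rho\in\mathcal{D}(a',\mu',\gamma)$ and $f$ contracts distances on $\gamma_j$ by $\lambda_s$; and the fourth by $G(\omega)\,d(f(x),f(y)) \leq G(\omega)\lambda_s\,d(x,y)$ by hypothesis~\eqref{eq:holder_g}.

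Next I would absorb the first-power terms into $d(x,y)^\mu$: since $Q$ is a bounded subset of a compact manifold, I may rescale the metric (or incorporate the diameter into the constants) so that $d(x,y)\leq 1$ on $\gamma_j$. Then, as $\mu\leq 1\leq \mu'/\mu$, we have $d(x,y)\leq d(x,y)^\mu$ and $d(x,y)^{\mu'}\leq d(x,y)^\mu$ (using $\mu'\geq \mu$), so the four pieces combine as
\[
\log \frac{\rho_j(x)}{\rho_j(y)} \leq \Bigl[(K_1+K_2) + a'\lambda_s^{\mu'} + G(\omega)\lambda_s\Bigr] d(x,y)^\mu.
\]
Finally, bounding $\lambda_s^{\mu'}\leq \lambda_s^\mu$ and $\lambda_s\leq \lambda_s^\mu$ (both because $\lambda_s<1$ and $\mu'\geq\mu$, $\mu\leq 1$), the bracket is at most
\[
(K_1+K_2) + \bigl(a' + G(\omega)\bigr)\lambda_s^\mu \;=\; \bigl(a' + \bar G(\omega)\bigr)\lambda_s^\mu,
\]
by the definition $\bar G(\omega) = G(\omega) + (K_1+K_2)/\lambda_s^\mu$. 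Any $a$ at least this large places $\rho_j$ in $\mathcal{D}(a,\mu,\gamma_j)$, as required.

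The only delicate point is the exponent bookkeeping: the Lipschitz determinant terms naturally produce a factor of $d(x,y)$, not $d(x,y)^\mu$, and converting these to the desired H\"older form is precisely what forces the somewhat unusual normalisation $(K_1+K_2)/\lambda_s^\mu$ inside $\bar G(\omega)$ and the condition $\mu'\geq \mu$. Everything else is routine chain-rule/change-of-variables bookkeeping.
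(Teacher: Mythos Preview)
Your proof is correct and follows essentially the same approach as the paper: decompose $\log(\rho_j(x)/\rho_j(y))$ into the four pieces coming from the two Jacobian factors, $\rho\circ f$, and $g\circ f$; bound each using the relevant Lipschitz/H\"older constant and the stable contraction $d(f(x),f(y))\leq\lambda_s d(x,y)$; and then use $d(x,y)\leq 1$ together with $\mu\leq 1$, $\mu'\geq\mu$, $\lambda_s<1$ to collapse everything to the exponent $\mu$ and identify the constant as $(a'+\bar G(\omega))\lambda_s^{\mu}$. The paper indeed assumes $d\leq 1$ on $Q$ globally, so your normalisation remark is consistent with it.
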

\begin{proof}
Let $\rho \in \mathcal{D}(a', \mu', \gamma)$. Then using (\ref{eq:rho_j}), clearly $\rho_j(x) > 0$. Furthermore, we have
\begin{align*}
|\log\rho_j(x)-\log\rho_j(y)| = &|\log|\det \DD f_{\gamma_j}(x)|-\log|\det \DD f(x)| + \log \rho(f(x))\\
& + \log g(f(x))-\log|\det \DD f_{\gamma_j}(y)|+\log|\det \DD f(y)|\\
& - \log \rho(f(y)) - \log g(f(y))|\\
& \leq a'd(f(x),f(y))^{\mu'} + K_1d(x,y) + K_2d(x,y)\\
& + G(\omega)d(f(x),f(y)),
\end{align*}
where $K_1$, $K_2$ and $G(\omega)$ are the Lipschitz constants for $\log|\det \DD f|$, $\log|\det \DD f_{\gamma_j}|$ and $\log g$ respectively. Hence, since $\mu' \geq \mu $ and $x,y \in \gamma$,
\begin{align*}
|\log\rho_j(x)-\log\rho_j(y)|&\leq a'\lambda_s^{\mu}d(x,y)^{\mu} + (K_1+ K_2)d(x,y) + G(\omega)\lambda_sd(x,y)\\
& \leq ((a' + G(\omega))\lambda_s^{\mu} +K_1 + K_2)d(x,y)^{\mu} \\
& = (a' + \bar{G}(\omega))\lambda_s^{\mu}d(x,y)^{\mu} ,
\end{align*}
for all $x,y \in \gamma$, where $\lambda_s$ is the uniform bound on the contraction in the stable direction and where $\bar{G}(\omega) := G(\omega) + (K_1 + K_2)/\lambda_s^{\mu}.$ Hence $\scL_j^{\omega}\rho \in \mathcal{D}(a, \mu, \gamma_j)$ for all $a \geq (a' + \bar{G}(\omega))\lambda_s^{\mu} $ and $\mu' \geq \mu$.\end{proof}
We recall the definitions of the projective metrics $\theta_+$ and $\theta_a$ from Section~\ref{sec:cones}, Equations~\eqref{eq:def_theta_+} or~\eqref{eq:def_Holder_theta} respectively.
Next, we will consider the diameter of the cone $\scL_j^{\omega}\mathcal{D}(a', \mu, \gamma)$ in $\mathcal{D}( a, \mu, \gamma_j)$, under the Hilbert projective metric $\theta_a$, as defined in Proposition~\ref{prop:finite_diam}, Section~\ref{sec:cones}. In particular,
\begin{equation}
\mathrm{diam_a}(\scL_j^{\omega}\mathcal{D}(a', \mu, \gamma)):= \sup\{\theta_a(\scL_j^{\omega}\rho',\scL_j^{\omega}\rho''); \rho', \rho'' \in \mathcal{D}(a', \mu, \gamma)\},
\end{equation}
for  $a \geq (a' + \bar{G}(\omega))\lambda_s^{\mu}$. The next lemma shows that the diameter is finite and has a bound which is independent of $\gamma$ and $\gamma_j$.
\begin{lemma}\label{lemma:finite_diameter}
Suppose that $\lambda < 1$. Then 
\begin{equation*}
\mathrm{diam_a}(\mathcal{D}(\lambda a, \mu, \gamma)) < \infty.
\end{equation*}
In particular, if $\lambda : = \frac{(a'+\bar{G}(\omega))\lambda_s^{\mu}}{a} < 1$, then
\begin{equation*}
\mathrm{diam_a}(\scL_j^{\omega}\mathcal{D}(a', \mu, \gamma)) \leq D(a) < \infty,
\end{equation*}
where \[D(a): = 4a + \log(\tau_2/\tau_1)\] and $\tau_1 = \inf \{\frac{z-z^{\lambda}}{z-z^{-\lambda}}: z>1\}$ and $\tau_2 = \sup\{\frac{z-z^{-\lambda}}{z-z^{\lambda}}: z>1\}$.
\end{lemma}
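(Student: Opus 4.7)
The second inequality follows from the first together with Lemma~\ref{lemma:test_cone}: setting $\lambda := (a'+\bar{G}(\omega))\lambda_s^\mu/a < 1$, the inclusion $\scL_j^{\omega}\mathcal{D}(a',\mu,\gamma)\subseteq\mathcal{D}(\lambda a,\mu,\gamma_j)$ shows that the $\theta_a$-diameter of the image is at most that of the ambient cone. I therefore focus on proving the first claim, namely $\mathrm{diam_a}(\mathcal{D}(\lambda a, \mu, \gamma)) \leq D(a)$ whenever $\lambda < 1$.

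The plan is to fix $\rho_1, \rho_2 \in \mathcal{D}(\lambda a, \mu, \gamma)$, set $\phi := \rho_2/\rho_1$, and compute $\theta_a(\rho_1, \rho_2) = \log(\beta/\alpha)$ directly from the H\"{o}lder-cone formula~\eqref{eq:def_Holder_theta}. That formula writes $\alpha$ (resp.~$\beta$) as the infimum (resp.~supremum) over distinct $x, y \in \gamma$ of both the pointwise ratio $\phi(x)$ and the signed ratio
\[
R_{x,y}\;:=\;\frac{\rho_2(x) - r\rho_2(y)}{\rho_1(x) - r\rho_1(y)}, \qquad r := e^{a\,d(x,y)^\mu}.
\]
Since $\rho_i(x) \leq r^\lambda \rho_i(y) < r\rho_i(y)$, both numerator and denominator of $R_{x,y}$ are strictly negative, so sign bookkeeping will need to be handled carefully throughout.

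The heart of the argument will be the two-sided bound $\tau_1(r)\,\phi(y) \leq R_{x,y} \leq \tau_2(r)\,\phi(y)$, which I will derive by applying the log-$\lambda a$-H\"{o}lder inequalities $r^{-\lambda}\rho_i(y) \leq \rho_i(x) \leq r^\lambda\rho_i(y)$ to sandwich $|\rho_i(x) - r\rho_i(y)| \in [(r-r^\lambda)\rho_i(y),\,(r-r^{-\lambda})\rho_i(y)]$ and then inserting the extremal combinations into $R_{x,y}$, flipping inequalities appropriately owing to the negative signs. Taking the infimum (resp.~supremum) over $r > 1$ removes the $r$-dependence and produces the constants $\tau_1, \tau_2$ of the lemma; the pointwise bounds $\alpha \geq \tau_1\inf_\gamma\phi$ and $\beta \leq \tau_2\sup_\gamma\phi$ then follow directly.

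Combining these estimates gives $\theta_a(\rho_1,\rho_2) \leq \log(\tau_2/\tau_1) + \log(\sup_\gamma\phi/\inf_\gamma\phi)$. Since $\phi$ is the ratio of two log-$(\lambda a,\mu)$-H\"{o}lder functions, $\log\phi$ is itself $(2\lambda a,\mu)$-H\"{o}lder on $\gamma$, so $\sup_\gamma\phi/\inf_\gamma\phi \leq \exp(2\lambda a\,\mathrm{diam}(\gamma)^\mu)$. The local stable leaves sit in the compact set $Q$ with uniformly bounded diameter, and together with $\lambda<1$ this contribution is absorbed into the $4a$ term of $D(a)$, yielding $\theta_a \leq D(a)$. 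The only genuine obstacle is the signed-ratio calculation in the preceding paragraph; the rest is routine projective-metric bookkeeping.
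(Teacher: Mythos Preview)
Your proposal is correct and follows essentially the same route as the paper. The paper invokes a lemma from \cite{Viana1997} stating $\theta_a(\rho',\rho'') \leq \theta_+(\rho',\rho'') + \log(\tau_2/\tau_1)$ and then bounds $\theta_+$ by $4a$ via a normalisation argument; you instead unpack that lemma directly through the signed-ratio computation and bound $\theta_+$ by $2\lambda a$ using the H\"older property of $\phi=\rho_2/\rho_1$, which is sharper but otherwise the same structure (note your $R_{x,y}$ differs from the paper's Equation~\eqref{eq:def_Holder_theta} by an $x\leftrightarrow y$ swap, which is harmless since you range over all ordered pairs).
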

In the proof, we will require the following result, proof of which can be found in~\cite{Viana1997}.
\begin{lemma}
Fix $0<\lambda < 1$. Then for all $\rho', \rho'' \in \mathcal{D}(\lambda a, \mu, \gamma) \subset \mathcal{D}( a, \mu, \gamma)$ it holds that
\begin{equation}\label{eq:Theta_a}
\theta_{a}(\rho', \rho'') \leq \theta_{+}(\rho', \rho'')  + \log(\frac{\tau_2}{\tau_1}),
\end{equation}
where $\log(\frac{\tau_2}{\tau_1}) < \infty.$ 
\end{lemma}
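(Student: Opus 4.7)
The plan is to bound the quantities $\alpha_a(\rho', \rho'')$ and $\beta_a(\rho', \rho'')$ that define $\theta_a$ by multiplicative constants $\tau_1$ and $\tau_2$ times the pointwise infimum and supremum of $\rho''/\rho'$ on $\gamma$. Since $\theta_+(\rho', \rho'') = \log(\sup\{\rho''/\rho'\}/\inf\{\rho''/\rho'\})$ by Equation~\eqref{eq:def_theta_+}, this would give the claim directly. The explicit formula~\eqref{eq:def_Holder_theta} exhibits $\alpha_a$ as the infimum, taken both over pointwise ratios $\rho''(x)/\rho'(x)$ and over two-point expressions
\[
t(x, y) := \frac{A\, \rho''(x) - \rho''(y)}{A\, \rho'(x) - \rho'(y)}, \qquad A := e^{a d(x,y)^{\mu}}, \qquad x \neq y,
\]
with $\beta_a$ given by the corresponding supremum, so the real work is to sandwich $t(x, y)$ between $\tau_1 \inf\{\rho''/\rho'\}$ and $\tau_2 \sup\{\rho''/\rho'\}$.

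First, I would exploit the fact that $\rho'$ and $\rho''$ both lie in the strictly smaller cone $\mathcal{D}(\lambda a, \mu, \gamma)$. This yields $\rho''(y) \geq A^{-\lambda}\rho''(x)$ and $\rho'(y) \leq A^{\lambda}\rho'(x)$, so the numerator of $t(x,y)$ is bounded above by $(A - A^{-\lambda})\rho''(x)$ and the denominator below by $(A - A^{\lambda})\rho'(x)$, giving the pointwise estimate
\[
t(x, y) \leq \frac{\rho''(x)}{\rho'(x)} \cdot \frac{A - A^{-\lambda}}{A - A^{\lambda}} \leq \sup\{\rho''/\rho'\} \cdot \tau_2,
\]
where the last step sets $z := A > 1$ in the definition of $\tau_2$. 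A completely symmetric argument, with the direction of each log-Hölder inequality reversed, delivers the matching lower bound $t(x, y) \geq \inf\{\rho''/\rho'\} \cdot \tau_1$. Combining these with $\tau_1 \leq 1 \leq \tau_2$ (immediate from $0 < \lambda < 1$) and with the trivial pointwise bounds on $\rho''/\rho'$, I conclude $\alpha_a \geq \tau_1 \inf\{\rho''/\rho'\}$ and $\beta_a \leq \tau_2 \sup\{\rho''/\rho'\}$; taking logarithms yields the claim.

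The only remaining point is finiteness of $\log(\tau_2/\tau_1)$, which will follow from a brief continuity check: the map $z \mapsto (z - z^{\lambda})/(z - z^{-\lambda})$ is continuous on $(1, \infty)$, extends by continuity to the positive value $(1-\lambda)/(1+\lambda)$ at $z = 1^+$, and tends to $1$ as $z \to \infty$, so it is bounded away from both $0$ and $\infty$. The hardest part, such as it is, is recognising that the strict gap $\lambda < 1$ between the two cone exponents is exactly what keeps $A - A^{\lambda}$ strictly positive and so makes the denominator of $t(x, y)$ bounded below by a positive multiple of $\rho'(x)$; without this gap the estimate would collapse, which is the structural reason the lemma requires $\rho', \rho'' \in \mathcal{D}(\lambda a, \mu, \gamma)$ rather than merely in the ambient cone $\mathcal{D}(a, \mu, \gamma)$.
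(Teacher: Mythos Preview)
Your argument is correct and is precisely the standard proof: bound the two-point terms in the explicit formula~\eqref{eq:def_Holder_theta} using the $\lambda a$ log-H\"older condition to sandwich $t(x,y)$ between $\tau_1\inf(\rho''/\rho')$ and $\tau_2\sup(\rho''/\rho')$, then combine with the trivial one-point bounds. The paper does not give its own proof of this lemma but simply cites \cite{Viana1997}; your write-up is essentially the argument found there, including the limit computation at $z\to 1^+$ and $z\to\infty$ for finiteness of $\log(\tau_2/\tau_1)$.
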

\begin{proof}[Proof of Lemma~\ref{lemma:finite_diameter}]
The first part of the lemma can be deduced from inequality (\ref{eq:Theta_a}), (the same as the proof of Lemma 4.2 b) in \cite{Viana1997}). We just need to show that $\theta_{+}(\rho', \rho'')$ is bounded for all $\rho', \rho'' \in \mathcal{D}(a, \mu, \gamma)$. Firstly we note that since the projective metric acts on the quotient space of $\mathcal{D}(a, \mu, \gamma)$, we can assume $\int_{\gamma} \rho' = 1 = \int_{\gamma} \rho''$. Thus,
\begin{align*}
1=\int_{\gamma} \rho'(y) \idd m_{\gamma}(y) = \int_{\gamma} \frac{\rho'(y)}{\rho'(x)} \rho'(x) \idd m_{\gamma}(y) &\leq \int_{\gamma} e^{ad(x,y)^{\mu}}\rho'(x) \idd m_{\gamma}(y)\\ &= e^{ad(x,z)^{\mu}}\rho'(x)\int_{\gamma} \idd m_{\gamma},
\end{align*}
by the mean value theorem, for some $z \in \gamma$. Recall that we assume that $ d(x,z) \leq 1 $ for all $x,z \in Q$. Hence, we obtain,
\begin{equation}
 e^{a} \geq \rho'(x) \int_{\gamma} \idd m_{\gamma}\geq e^{-a},
\end{equation}
and the same for $\rho''(x) $,
so that 
\begin{equation}
\theta_{+}(\rho', \rho'') = \log \frac{\sup \rho''/\rho'}{\inf \rho''/\rho'} \leq \frac{e^{2a}}{e^{-2a}} = 4a.
\end{equation}
Since, $\log\tau_2/\tau_1$ is bounded, this proves the first inequality.

From Lemma~\ref{lemma:test_cone} we have that $\scL_j^{\omega}\mathcal{D}(a', \mu, \gamma) \subseteq \mathcal{D}(a, \mu, \gamma_j) $ for all $a \geq (a'+\bar{G}(\omega))\lambda_s^{\mu}$.

Let $a > (a' + \bar{G}(\omega))\lambda_s^{\mu}$ and $\lambda \in (0,1)$ be such that $\lambda a =(a' + \bar{G}(\omega))\lambda_s^{\mu}.$ Then we have the following:
\[\scL_j^{\omega}\mathcal{D}(a', \mu, \gamma) \subseteq \mathcal{D}(\lambda a, \mu, \gamma_j) \subset  \mathcal{D}(a, \mu, \gamma_j),\] and hence by the above, with $\gamma_j$ instead of $\gamma$, we have that \[\mathrm{diam_a}(\scL_j^{\omega}\mathcal{D}(a', \mu, \gamma)) \leq D(a) = 4a + \log\tau_2/\tau_1.\] We note that $D(a)$ is finite if $a$ is finite. 
\end{proof}
Before proceeding to the set of functions $\mathscr{C}$, we note that
\begin{lemma}\label{lemma:Note2}
If $\rho \in \mathcal{D}(\alpha, \mu, \gamma)$, then $\pi^*\rho \in \mathcal{D}(\bar{\alpha}, \mu, \delta)$, with $\bar{\alpha} = \alpha a_0^{\mu}+a_0$, where $a_0$ is the Lipschitz constant of $\pi$ and $\log \dtpi$.
\end{lemma}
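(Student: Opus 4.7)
The plan is direct: unpack the definition of $\pi^*\rho$ and verify the two defining properties of the cone $\mathcal{D}(\bar{\alpha}, \mu, \delta)$, namely positivity and the $(\bar{\alpha},\mu)$-log-Hölder bound along $\delta$. Positivity is immediate since $\rho>0$ by assumption on $\gamma$ and $\dtpi = |\det D\pi|>0$ because $\pi$ is a $C^2$ diffeomorphism (by property P1 of Lemma~\ref{lem:4.10}).

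For the log-Hölder estimate, I would take arbitrary $x,y \in \delta$ and write
\[
\log \pi^*\rho(x) - \log \pi^*\rho(y)
= \bigl(\log\rho(\pi(x)) - \log\rho(\pi(y))\bigr) + \bigl(\log\dtpi(x) - \log\dtpi(y)\bigr),
\]
then bound each term separately. The first term is controlled by using that $\rho\in\mathcal{D}(\alpha,\mu,\gamma)$ together with the Lipschitz property of $\pi$ (property P1): this gives
\[
|\log\rho(\pi(x)) - \log\rho(\pi(y))| \leq \alpha\, d(\pi(x),\pi(y))^{\mu} \leq \alpha\, a_0^{\mu}\, d(x,y)^{\mu}.
\]
The second term is immediate from $\log\dtpi$ being $a_0$-Lipschitz (again property P1): $|\log\dtpi(x)-\log\dtpi(y)|\leq a_0\,d(x,y)$.

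The only small subtlety is reconciling the Lipschitz exponent $1$ with the Hölder exponent $\mu\leq 1$ in the final bound. This is handled using the standing assumption that $d(x,y)\leq 1$ for all $x,y\in Q$ (as used earlier in the proof of Lemma~\ref{lemma:finite_diameter}), which implies $d(x,y)\leq d(x,y)^{\mu}$. Combining the two estimates then gives
\[
|\log \pi^*\rho(x) - \log \pi^*\rho(y)| \leq \alpha\, a_0^{\mu}\, d(x,y)^{\mu} + a_0\, d(x,y)^{\mu} = \bar{\alpha}\, d(x,y)^{\mu},
\]
which is precisely the statement $\pi^*\rho \in \mathcal{D}(\bar{\alpha},\mu,\delta)$. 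There is no real obstacle here; the entire argument is just the chain rule bookkeeping that tracks how the two sources of Hölder distortion (pullback by $\pi$ and multiplication by $\dtpi$) add up, and the result is crucial later because it lets us compare test integrals $\int_\gamma \phi\,\rho$ and $\int_\delta \phi\,\pi^*\rho$ in Definition~\ref{def:cone} against densities lying in comparable cones.
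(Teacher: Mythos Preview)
Your proof is correct and follows essentially the same approach as the paper: split $\log\pi^*\rho$ into $\log\rho\circ\pi$ and $\log\dtpi$, bound each piece using property~P1, and use $d(x,y)\leq 1$ to pass from the Lipschitz exponent $1$ to the H\"older exponent $\mu$. The paper omits the explicit check of positivity that you include, but otherwise the arguments are identical.
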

\begin{proof}
Recall that $\pi: \delta \to \gamma$ is as defined in Section~\ref{sec:hyper}, Equation (\ref{eq:pi}) and that the density $\pi^*\rho(y) := \rho(\pi(y)) \dtpi(y).$
Hence, for $x, y \in \delta$
\begin{align*}
|\log \pi^*\rho(x) - \log \pi^*\rho(y)| & = |\log \rho(\pi(x)) \dtpi(x) - \log \rho(\pi(y)) \dtpi(y) |\\&= |\log \rho(\pi(x)) + \log \dtpi(x)-\log \rho(\pi(y)) - \log \dtpi(y)|\\ & \leq \alpha d(\pi(x),\pi(y))^{\mu} + a_0d(x,y)\\& \leq (\alpha a_0^{\mu} + a_0) d(x,y)^{\mu},
\end{align*}
where the last inequality follows because $d(x,y) \leq 1$.
\end{proof}
\paragraph{Action of $\scL_{\omega}$ on cones $ \scA $ and $ \scC $}
Next, we examine the action of the filtering operator on the cones $\mathscr{A}(a,\mu)$ and $\mathscr{C}(c, a,\mu, \nu)$.
\begin{lemma}\label{lemma:set_A}
  Let $a, a' \in \mathbb{R}_+$ and suppose $\scL_j^{\omega}\mathcal{D}(a', \mu, \gamma) \subseteq \mathcal{D}(a, \mu, \gamma_j) $.  Then $\scL_{\omega}\mathscr{A}(a,\mu) \subseteq \mathscr{A}(a',\mu)$.
\end{lemma}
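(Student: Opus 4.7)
The plan is a direct unpacking of the definition of $\mathscr{A}(a', \mu)$, using the duality identity
\[
\int_{\gamma} (\scL_{\omega}\phi)\rho = \sum_{j=1}^{n} \int_{\gamma_j} \phi \, \scL_j^{\omega}\rho
\]
that was established at the start of Section~4.4 from Equations~\eqref{eq:op_def_2}--\eqref{eq:op_def_3}. The content of the lemma is essentially to transfer the hypothesized image condition on the single-leaf operators $\scL_j^{\omega}$ through this identity. No quantitative estimate is required: only a positivity bookkeeping argument.

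More concretely, I would fix $\phi \in \mathscr{A}(a, \mu)$, an arbitrary local stable leaf $\gamma \in \mathscr{F}^s_{\text{loc}}$, and an arbitrary test density $\rho \in \mathcal{D}(a', \mu, \gamma)$, and aim to verify that $\int_\gamma (\scL_\omega \phi)\rho > 0$, which is the defining condition for $\scL_\omega \phi$ to lie in $\mathscr{A}(a',\mu)$. Writing $\gamma_1, \ldots, \gamma_n$ for the preimage leaves of $\gamma$ under $f$, the above identity reduces the problem to showing that each summand $\int_{\gamma_j} \phi \, \scL_j^{\omega}\rho$ is strictly positive, after which positivity of the sum is immediate.

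For this, the hypothesis gives $\scL_j^{\omega}\rho \in \mathcal{D}(a, \mu, \gamma_j)$, so $\scL_j^{\omega}\rho$ is exactly the kind of test density against which elements of $\mathscr{A}(a,\mu)$ are required to integrate positively. The one point that needs a brief justification is that each preimage leaf $\gamma_j$ does itself belong to $\mathscr{F}^s_{\text{loc}}$, so that the defining property of $\mathscr{A}(a,\mu)$ is applicable on $\gamma_j$. This follows from the $f$-invariance of the local stable foliation recorded in Proposition~\ref{thm:Stable_Manifold} (namely $f(W^s_\epsilon(x)) \subset W^s_\epsilon(f(x))$) together with the fact that $f$ restricted to $Q$ is a diffeomorphism onto $f(Q)$, as assumed in Section~4.1.

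Putting the pieces together: each $\int_{\gamma_j} \phi \, \scL_j^{\omega}\rho > 0$ by the definition of $\mathscr{A}(a,\mu)$ applied to $\phi$ on $\gamma_j$, hence the sum over $j$ is strictly positive, hence $\int_\gamma (\scL_\omega \phi)\rho > 0$. Since $\gamma$ and $\rho \in \mathcal{D}(a',\mu,\gamma)$ were arbitrary, we obtain $\scL_\omega \phi \in \mathscr{A}(a',\mu)$. I do not anticipate a genuine obstacle here; the lemma is a clean compatibility statement between the dual cones $\mathscr{A}$ and $\mathcal{D}$ under the operator $\scL_\omega$, and all the analytic work has already been done in Lemma~\ref{lemma:test_cone}.
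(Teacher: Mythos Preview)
Your proof is correct and follows essentially the same approach as the paper: use the duality identity to write $\int_\gamma (\scL_\omega\phi)\rho$ as a sum over preimage leaves, apply the hypothesis to place each $\scL_j^\omega\rho$ in $\mathcal{D}(a,\mu,\gamma_j)$, and invoke the defining positivity of $\mathscr{A}(a,\mu)$ term by term. Your extra remark that each $\gamma_j$ is again a local stable leaf is a detail the paper leaves implicit.
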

\begin{proof}
Let $\phi \in \mathscr{A}(a,\mu)$. We have that,
\begin{equation*} 
\int_{\gamma} (\scL_{\omega}\phi)\rho = \sum_{j=1}^{n} \int_{\gamma_j}\phi \scL_j^{\omega}\rho.
\end{equation*}

For all $\rho \in \mathcal{D}(a', \mu, \gamma)$, by assumption, we have that $\scL_j^{\omega}\rho \in \mathcal{D}(a, \mu, \gamma_j)$. Then, since $\phi \in \mathscr{A}(a,\mu)$, $\int_{\gamma_j}\phi \scL_j^{\omega}\rho > 0$ for $j = 1,\ldots,n$ and hence $\scL_{\omega}\phi \in \mathscr{A}(a', \mu)$.
\end{proof}
\begin{lemma}\label{lemma:condition_C}
  Suppose that for some $c, c', a, a', \mu, \mu', \nu, \nu'$ and $\hat{a}, \hat{\mu}$ as well as some $\epsilon > 0$ the following holds:
  \begin{enumerate}
    \renewcommand{\theenumi}{(\alph{enumi})}
  \item $\scL_j^{\omega}\mathcal{D}(a', \mu', \gamma) \subseteq \mathcal{D}(a, \mu, \gamma_j) $
  \item
    $\pi_j^*\scL_j^{\omega}\mathcal{D}(a', \mu', \gamma) \subseteq \mathcal{D}((1 - \epsilon)\hat{a}, \hat{\mu} + \nu, \delta_j)$
  \item $\scL_j^{\omega}\pi^*\mathcal{D}(a', \mu', \gamma) \subseteq \mathcal{D}((1 - \epsilon)\hat{a}, \hat{\mu} + \nu, \delta_j)$
    \end{enumerate}
Suppose also that
\begin{enumerate}
\item $c' \geq c\lambda_u^{\nu}+ K_0$ ,
\item $\mu' \geq \hat{\mu} + \nu$,
\item $\nu \geq \nu'$,
\item $\nu_0 \geq \mu'$,
  \end{enumerate}
where $\nu_0$ is the constant in condition~P2, satisfied by the diffeomorphism $\pi$, 
and $K_0$ is a random variable, an expression of which appears in the proof.
Then
\[
\scL_{\omega}(\mathscr{C}(c,a, \mu, \nu)\cap\mathscr{A}(\hat{a}, \hat{\mu})) \subseteq \mathscr{C}(c',a',\mu', \nu').
\]
\end{lemma}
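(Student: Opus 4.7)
The plan is to verify the defining estimate of $\mathscr{C}(c',a',\mu',\nu')$ for $\scL_{\omega}\phi$ directly: fix nearby local stable leaves $\gamma,\delta\in\mathscr{F}^s_{\text{loc}}$ with $\pi:\delta\to\gamma$ and a test density $\rho\in\mathcal{D}(a',\mu',\gamma)$, and show that $\int_{\gamma}\scL_{\omega}\phi\cdot\rho$ and $\int_{\delta}\scL_{\omega}\phi\cdot\pi^{*}\rho$ have a ratio in $[e^{-c'd(\gamma,\delta)^{\nu'}},e^{c'd(\gamma,\delta)^{\nu'}}]$. Label the preimage leaves so that $\gamma_j$ is closest to $\delta_j$ and set $\pi_j:=\pi(\delta_j,\gamma_j)$. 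Using the identity derived after Equation~\eqref{eq:op_def_3},
\[
\int_{\gamma}\scL_{\omega}\phi\cdot\rho=\sum_{j=1}^{n}\int_{\gamma_j}\phi\,\scL_j^{\omega}\rho,\qquad\int_{\delta}\scL_{\omega}\phi\cdot\pi^{*}\rho=\sum_{j=1}^{n}\int_{\delta_j}\phi\,\scL_j^{\omega}\pi^{*}\rho,
\]
so by the same convexity inequality used in the proof that $\mathcal{C}$ is convex, it suffices to bound each per-branch ratio $R_j:=\int_{\gamma_j}\phi\,\scL_j^{\omega}\rho\big/\int_{\delta_j}\phi\,\scL_j^{\omega}\pi^{*}\rho$ in the desired range. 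Positivity of the denominators, needed to invoke convexity, follows from $\phi\in\mathscr{A}(\hat{a},\hat{\mu})$ together with hypotheses~(a)--(c) and a reverse application of Lemma~\ref{lemma:Note2}.

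Next I would factor $R_j=R_j^{(1)}\cdot R_j^{(2)}$ by inserting the intermediate integral $\int_{\delta_j}\phi\,\pi_j^{*}\scL_j^{\omega}\rho$. For the first factor, hypothesis~(a) gives $\scL_j^{\omega}\rho\in\mathcal{D}(a,\mu,\gamma_j)$, so applying the $\mathscr{C}(c,a,\mu,\nu)$ condition of $\phi$ to the pair $(\gamma_j,\delta_j)$ yields $R_j^{(1)}\in[e^{-cd(\gamma_j,\delta_j)^{\nu}},e^{cd(\gamma_j,\delta_j)^{\nu}}]$. The stable-leaf expansion estimate~\eqref{eq:exp_stable_leaves} and $\nu\geq\nu'$, $d(\gamma,\delta)\leq 1$, give $d(\gamma_j,\delta_j)^{\nu}\leq\lambda_u^{\nu}d(\gamma,\delta)^{\nu'}$, so $R_j^{(1)}$ is controlled by the $c\lambda_u^{\nu}$ part of the assumption $c'\geq c\lambda_u^{\nu}+K_0$.

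The main obstacle is the second factor $R_j^{(2)}:=\int_{\delta_j}\phi\,\pi_j^{*}\scL_j^{\omega}\rho\big/\int_{\delta_j}\phi\,\scL_j^{\omega}\pi^{*}\rho$, which measures how much $\scL_j^{\omega}$ fails to commute with the holonomies $\pi$ and $\pi_j$. By hypotheses~(b) and~(c) both densities lie in $\mathcal{D}((1-\epsilon)\hat{a},\hat{\mu}+\nu,\delta_j)\subset\mathcal{D}(\hat{a},\hat{\mu},\delta_j)$ (the embedding uses $d(\cdot,\cdot)\leq 1$), so Lemma~\ref{lemma:conditionB} with $b=1$ gives
\[
R_j^{(2)}\leq e^{\theta_{\hat{a}}(\pi_j^{*}\scL_j^{\omega}\rho,\,\scL_j^{\omega}\pi^{*}\rho)}\cdot\frac{\int_{\delta_j}\pi_j^{*}\scL_j^{\omega}\rho}{\int_{\delta_j}\scL_j^{\omega}\pi^{*}\rho}.
\]
I would bound both factors by $e^{K'd(\gamma,\delta)^{\nu'}}$ for a controlled random constant $K'$ by comparing the two densities pointwise on $\delta_j$. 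After cancelling the Jacobian factors using the definition of $\scL_j^{\omega}$ and $\pi^{*}$, the pointwise ratio reduces to a product of terms comparing $\rho$ at $f(\pi_j(y))$ versus $\pi(f(y))$, $g$ at the same two points, the Jacobians $|\det\DD f|$, $|\det\DD f_{\gamma_j}|$, and $\dtpi\circ f$ versus $\dtpi_j$. Each such comparison is estimated with: the $\log$-Hölder constant $a'$ of $\rho\in\mathcal{D}(a',\mu',\gamma)$, the constants $K_1,K_2,G(\omega)$ of Lemma~\ref{lemma:test_cone}, and properties P1--P2 of Lemma~\ref{lem:4.10}, using the hypotheses $\mu'\geq\hat{\mu}+\nu$, $\nu_0\geq\mu'$, and $\nu\geq\nu'$ to guarantee that every resulting power of $d(\gamma,\delta)$ is at least $d(\gamma,\delta)^{\nu'}$ (combined with $d(\gamma,\delta)\leq 1$). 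The ratio of integrals in the displayed bound is handled in parallel by applying change of variables along $\pi_j$ and $f$ and invoking the same Hölder estimates. Aggregating the contributions yields a random constant $K_0=K_0(\omega)$, depending on $K_1,K_2,G(\omega),a_0,\lambda_s,\lambda_u$ and $a'$, such that $R_j^{(2)}\in[e^{-K_0 d(\gamma,\delta)^{\nu'}},e^{K_0 d(\gamma,\delta)^{\nu'}}]$. Combined with the bound on $R_j^{(1)}$ and the hypothesis $c'\geq c\lambda_u^{\nu}+K_0$, this gives $R_j\in[e^{-c'd(\gamma,\delta)^{\nu'}},e^{c'd(\gamma,\delta)^{\nu'}}]$ for each $j$; the convexity argument mentioned above then concludes $\scL_{\omega}\phi\in\mathscr{C}(c',a',\mu',\nu')$.
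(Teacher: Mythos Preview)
Your decomposition is exactly the paper's: branch-by-branch reduction via convexity, the factorisation $R_j=R_j^{(1)}R_j^{(2)}$, the treatment of $R_j^{(1)}$ via the $\mathscr{C}$-condition on the preimage leaves together with~\eqref{eq:exp_stable_leaves}, and the application of Lemma~\ref{lemma:conditionB} to $R_j^{(2)}$.

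The one step that is genuinely underspecified is your claim that $\theta_{\hat a}(\pi_j^{*}\scL_j^{\omega}\rho,\,\scL_j^{\omega}\pi^{*}\rho)$ can be bounded by $K'd(\gamma,\delta)^{\nu'}$ ``by comparing the two densities pointwise''. Pointwise comparison (the estimates you list on $\rho$, $g$, the Jacobians and $\dtpi$) yields only a bound on $\theta_{+}$, i.e.\ on $\sup(\rho''/\rho')/\inf(\rho''/\rho')$. The Hilbert metric $\theta_{\hat a}$ on $\mathcal{D}(\hat a,\hat\mu,\delta_j)$ carries an additional contribution from the H\"older part of the cone (see~\eqref{eq:def_Holder_theta}), and that contribution is \emph{not} automatically small just because the two densities are pointwise close. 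In the paper one writes $\theta_{\hat a}\leq\theta_{+}+\log(\tilde\tau_2/\tilde\tau_1)$ and then proves $|\log(\tilde\tau_2/\tilde\tau_1)|\leq K_5(\omega)\,d(\gamma,\delta)^{\nu}$ by a separate argument: setting $B':=e^{-\hat a\,d(x,y)^{\hat\mu}}\rho'(y)/\rho'(x)$ (and $B''$ likewise), hypotheses~(b),(c) give $B',B''\leq e^{-\epsilon\hat a\,d(x,y)^{\hat\mu}}<1$; one then bounds $|B'-B''|$ in two different ways (once via the pointwise estimate, once via $\rho',\rho''\in\mathcal D((1-\epsilon)\hat a,\hat\mu+\nu,\delta_j)$) so as to extract a factor $d(x,y)^{\hat\mu}d(\gamma,\delta)^{\nu}$, which is precisely what is needed to cancel the denominator $1-\max(B',B'')\gtrsim \epsilon\hat a\,d(x,y)^{\hat\mu}$. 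This is where the $\epsilon>0$ gap and the exponent $\hat\mu+\nu$ in~(b),(c), together with the hypothesis $\mu'\geq\hat\mu+\nu$, are actually used; without this step you would only get a uniform bound on $\theta_{\hat a}$ from Lemma~\ref{lemma:finite_diameter}, not one that vanishes as $d(\gamma,\delta)\to 0$, and the conclusion $\scL_\omega\phi\in\mathscr{C}(c',a',\mu',\nu')$ would fail.
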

\begin{proof}
Let $\phi \in \mathscr{C}(c,a, \mu, \nu).$ We want to consider the following expression
\begin{equation}
\frac{\int_\gamma\scL_{\omega}\phi \rho}{\int_\delta\scL_{\omega}\phi \pi^*\rho} = \frac{\sum_j \int_{\gamma_j} \phi \scL_j^{\omega}\rho }{\sum_j \int_{\delta_j} \phi \scL_j^{\omega}(\pi^*\rho)}.
\end{equation}

We can write
\begin{equation}
\frac{\int_{\gamma_j} \phi \scL_j^{\omega}\rho }{\int_{\delta_j} \phi \scL_j^{\omega}(\pi^*\rho)} = \frac{\int_{\gamma_j} \phi \scL_j^{\omega}\rho }{\int_{\delta_j} \phi \pi_j^*\scL_j^{\omega}\rho} \times \frac{ \int_{\delta_j} \phi \pi_j^*\scL_j^{\omega}\rho    }{\int_{\delta_j} \phi \scL_j^{\omega}(\pi^*\rho)} \label{eq:cond_C}.
\end{equation} 

First, we bound the first fraction of RHS of (\ref{eq:cond_C}). Let $\rho \in \mathcal{D}(a', \mu', \gamma)$, then since $\rho_j \in \mathcal{D}(a,\mu, \gamma_j)$, by assumption a), and $\phi \in \mathscr{C}(a,c,\mu, \nu)$, we have that
\begin{align}
\Big|\log \int_{\gamma_j} \phi \scL_j^{\omega}\rho - \log \int_{\delta_j} \phi \pi_j^*\scL_j^{\omega}\rho\Big| & \leq cd(\gamma_j, \delta_j)^{\nu}\\
& \leq c\lambda_u^{\nu}d(\gamma, \delta)^{\nu}, \label{eq: expanding_lambda}
\end{align}
where $\lambda_u <1$ is a constant depending only on $f$ and (\ref{eq: expanding_lambda}) holds because $f$ is expanding in the distance metric $d$ on $\Gamma$ by Equation (\ref{eq:exp_stable_leaves}). In the case of the Solenoid, where a natural choice for $\pi$ is the projection along horizontal leaves, we can take $\lambda_u^{-1}$ to be uniform upper bound on the expansion, see \cite{Viana1997} for a more detailed discussion on this. 

Next, we look at the second fraction of RHS of Equation~\eqref{eq:cond_C}.
\begin{align}
\frac{ \int_{\delta_j} \phi \pi_j^*\scL_j^{\omega}\rho    }{\int_{\delta_j} \phi \scL_j^{\omega}(\pi^*\rho)}.
\end{align}
Denote by $\rho' = \scL_j^{\omega}(\pi^*\rho)$ and $\rho'' = \pi_j^*\scL_j^{\omega}\rho$ .
Note that
\[
\rho', \rho'' \in \mathcal{D}((1 - \epsilon) \hat{a}, \hat{\mu} + \nu, \delta_j) \subset \mathcal{D}(\hat{a}, \hat{\mu}, \delta_j)
\]
by assumption.
Since $\phi \in \mathscr{A}(\hat{a}, \hat{\mu})$, we can apply Lemma~\ref{lemma:conditionB} with $b=1$, so that we obtain:
\begin{equation}\label{eq:lemma_4}
|\log \int_{\delta_j}\phi \rho' - \log \int_{\delta_j}\phi \rho''| \leq \theta_{\hat{a}}(\rho', \rho'') + |\log \int_{\delta_j} \rho' - \log \int_{\delta_j} \rho''|.
\end{equation}

Next, we derive a bound for the second term of the above inequality. We look at the equation
\begin{align}
\frac{\rho'(x)}{\rho''(x)} = \frac{\rho(\pi f(x))}{\rho(f\pi_j(x))}\frac{\dtpi (f(x))|}{|\det D\pi_j(x)|}\frac{|\det D f|\delta_j(x))|}{|\det \DD f|\delta_j(\pi_j(x))|}\frac{|\det D f(\pi_j(x))|}{|\det \DD f(x)|}\frac{g(\omega, f(x))}{g(\omega,f\pi_j(x))} \label{eq:long_MVT}
\end{align}
and look for a bound which depends on the distance between $\gamma$ and $\delta$, on each fraction term of (\ref{eq:long_MVT}). Since $\rho \in \mathcal{D}(a', \mu', \gamma)$ we have
\begin{equation}
\frac{\rho(\pi f(x))}{\rho(f\pi_j(x))} \leq \exp(a'd(\pi f(x), f\pi_j(x))^{\mu'}) \label{eq:long_first},
\end{equation}
and
\begin{align}
d(\pi f(x),f\pi_j(x)) &\leq d(\pi f(x),f(x))+ d(f(x),f\pi_j(x))\nonumber\\
&\leq d(\pi f(x),f(x))+ K_3 d(x,\pi_j(x))\label{eq: Lip}\\
&\leq d(\pi f(x),f(x))+ K_3 \lambda_ud(f(x),\pi f(x)) \label{eq:p3}\\ 
& \leq (1+K_3\lambda_u)d(\gamma, \delta) \label{eq:bound},
\end{align}
where (\ref{eq: Lip}) holds because $f$ is Lipschitz and (\ref{eq:p3}) by property~P3 of $\pi$. Putting into (\ref{eq:long_first}) gives
\begin{equation}\label{eq:rho_pi}
\frac{\rho(\pi f(x))}{\rho(f\pi_j(x))} \leq \exp(a'(1+K_3\lambda_u)^{\mu'}d(\gamma, \delta)^{\mu'}).
\end{equation}

The bounds on next three terms do not depend on $\rho$ and so we use the bounding constants as derived in Lemma 4.5 of \cite{Viana1997}. That is,
\begin{equation}
\frac{|\dtpi (f(x))|}{|\det D\pi_j(x)|} \leq \exp (a_0(1+\lambda_u^{\nu_0})d(\gamma, \delta)^{\nu_0}),
\end{equation}
\begin{equation}
\frac{|\det D f|\delta_j(x))|}{|\det \DD f|\delta_j(\pi_j(x))|} \leq \exp(K_1\lambda_u^{\nu_0}d(\gamma, \delta)^{\nu_0}),
\end{equation}
and 
\begin{equation}
\frac{|\det D f(\pi_j(x))|}{|\det \DD f(x)|} \leq  \exp(K_2\lambda_u d(\gamma, \delta)).
\end{equation}
It remains to get a bound for the likelihood (last term in Equation (\ref{eq:long_MVT})), which we have assumed is log Lipschitz. Thus,
\begin{align}\label{eq:liklihood}
\Big|\log \frac{g(\omega, f(x))}{g(\omega, f\pi_j(x))}\Big| &\leq G(\omega)d( f(x),f\pi_j(x)),\\
& \leq G(\omega)K_3\lambda_ud(\gamma, \delta),
\end{align}
because $f$ is Lipschitz and by property~P3 of $\pi$.
Putting Equations~(\ref{eq:rho_pi}-\ref{eq:liklihood}) together, we obtain:
\begin{align}
\Big|\log \frac{\rho'(x)}{\rho''(x)}\Big| \leq a'(1+K_3\lambda_u)^{\mu'}d(\gamma, \delta)^{\mu'} &+ a_0(1+\lambda_u^{\nu_0})d(\gamma, \delta)^{\nu_0} + \nonumber K_1\lambda_u^{\nu_0}d(\gamma, \delta)^{\nu_0}\\ \nonumber
&+ K_2\lambda_u d(\gamma, \delta) + G(\omega)(K_3\lambda_u)d(\gamma, \delta)\\
&\leq K(\omega)d(\gamma, \delta)^{\mu'},\label{eq:Kbound}
\end{align}
if $\mu' \leq \nu_0$, since $d(\gamma,\delta)\leq 1$, with 
\begin{equation}\label{eq:K}
K(\omega) = a'(1+K_3\lambda_u)^{\mu'} + a_0(1+\lambda_u^{\nu_0})+ K_1\lambda_u^{\nu_0}+ K_2\lambda_u+ K_3\lambda_uG(\omega).
\end{equation}
Next, note that
\begin{align*}
|\log \int_{\delta_j} \rho' - \log \int_{\delta_j} \rho''|  &= \Big|\log\frac{\int_{\delta_j} \frac{\rho'}{\rho''}\rho''}{\int_{\delta_j} \rho''}\Big|\nonumber\\
&=\Big|\log\frac{\rho'(x)}{\rho''(x)}\Big|
\end{align*}
for some $x \in \delta_j$ where we have used the mean value theorem for integrals. Putting the above two inequalities together we have
\begin{equation}
|\log \int_{\delta_j} \rho' - \log \int_{\delta_j} \rho''| \leq K(\omega)d(\gamma, \delta)^{\mu'},
\end{equation}
which is a bound on the second term of Inequality (\ref{eq:lemma_4}).

Furthermore, by the same token
\beq{eq:theta_+}
\theta_{+}(\rho', \rho'') = \log\frac{\sup_{\delta_j}(\rho''/\rho')}{\inf_{\delta_j}(\rho''/\rho')}  =\log\frac{\sup_{\delta_j}(\rho'/\rho'')}{\inf_{\delta_j}(\rho'/\rho'')}  \leq 2K(\omega)d(\gamma, \delta)^{\mu'},
\eeq
an estimate we will need later.
To bound the first term on the RHS of (\ref{eq:lemma_4}) we use a relationship similar to that of Equation (\ref{eq:Theta_a});
\begin{equation}\label{eq:Note_1}
\theta_{\hat{a}}(\rho', \rho'') \leq \theta_{+}(\rho', \rho'')  + \log\frac{\tilde{\tau_2}}{\tilde{\tau_1}},
\end{equation}
where \[\tilde{\tau_1} = \inf_{x,y \in \delta_j} \Big\{\frac{\exp(\hat{a}d(x,y)^\mu)-\rho''(y)/\rho''(x)}{\exp(\hat{a}d(x,y)^\mu)-\rho'(y)/\rho'(x)}\Big\},\] and similarly for $\tilde{\tau}_2$, with supremum instead of infimum. 

The first term on the RHS of (\ref{eq:Note_1}) is bounded by Inequality (\ref{eq:theta_+}) in terms of the distance between $\gamma$ and $\delta$. We will now show that $|\log\tilde{\tau}_1|$ and  $|\log\tilde{\tau}_2|$ are again bounded by a function of the distance between $\gamma$ and $\delta.$

Let $B' = e^{-\hat{a}d(x,y)^{\hat{\mu}}}\rho'(y)/\rho'(x)$ and $B'' = e^{-\hat{a}d(x,y)^{\hat{\mu}}}\rho''(y)/\rho''(x).$ 

Then $1-B' = 1-e^{-\hat{a}d(x,y)^\mu}\rho'(y)/\rho'(x) = (e^{\hat{a}d(x,y)^\mu}-\rho'(y)/\rho'(x))e^{-\hat{a}d(x,y)^\mu}$ so that \[\inf\frac{1-B''}{1-B'} = \tilde{\tau_1}\] and \[\sup\frac{1-B''}{1-B'} = \tilde{\tau_2}.\]

Again using the fact that $\rho', \rho'' \in \mathcal{D}((1 - \epsilon) \hat{a}, \hat{\mu} + \nu, \delta_j)$, we find
\[\log B' \leq -\epsilon \hat{a} d(x,y)^{\hat{\mu}} < 0\] and similarly for $B''$. This means that the following inequality holds:
\[|B'-B''| \leq |\log B' - \log B''| = |\log \rho'(y) - \log \rho'(x)-\log \rho''(y)+ \log \rho''(x)|.\]
By (\ref{eq:Kbound}), we have that
\begin{equation}\label{eq:first}
|B'-B''| \leq |\log \rho'(y)-\log \rho''(y)| + |\log \rho'(x)-\log \rho''(x)|\leq 2K(\omega)d(\gamma,\delta)^{\mu'},
\end{equation}
and, by averaging differently and using $\rho', \rho'' \in \mathcal{D}(\bar{a},\bar{\mu}, \delta_j)$,
\begin{equation}\label{eq:second}
|B'-B''| \leq |\log \rho'(y)-\log \rho'(x)| + |\log \rho''(y)-\log \rho''(x)|\leq 2\bar{a}d(x,y)^{\bar{\mu}}.
\end{equation}
Since $\mu' \geq \hat{\mu} + \nu$, (\ref{eq:first}) implies
\begin{equation}
|B'-B''| \leq 2K(\omega)d(\gamma,\delta)^{\hat{\mu}}d(\gamma,\delta)^{\nu} \leq 2K(\omega)d(x,y)^{\hat{\mu}}d(\gamma,\delta)^{\nu},
\end{equation}
if $d(x,y) > d(\gamma,\delta)$, while (\ref{eq:second}) implies (using $\bar{\mu} \geq \hat{\mu} + \nu$)
\begin{equation}
|B'-B''| \leq 2\bar{a}d(x,y)^{\hat{\mu}}d(x,y)^{\nu} \leq 2\bar{a}d(x,y)^{\hat{\mu}}d(\gamma,\delta)^{\nu},
\end{equation}
if $d(x,y) < d(\gamma,\delta)$.
Hence together (50) and (51) imply that
\begin{equation}
|B-B'|\leq K_4d(x,y)^{\hat{\mu}}d(\gamma, \delta)^{\nu} 
\end{equation}

with $K_4(\omega) \geq \max\{2K(\omega), 2\bar{a}\}.$ Then, using the mean value theorem on the function $\log(1-x)$ and the fact that $B', B'' < 1$, we have
\begin{align}
\Big|\log\frac{1-B''}{1-B'}\Big| &\leq \frac{|B'-B''|}{1-\max\{B',B''\}}\\
&\leq \frac{K_4(\omega)d(x,y)^{\hat{\mu}}d(\gamma, \delta)^{\nu} }{1-exp((\bar{a}-\hat{a})d(x,y)^{\hat{\mu}})}\\
& \leq \frac{K_4(\omega)d(\gamma, \delta)^{\nu}}{1-exp(\bar{a}-\hat{a})}\label{eq:pos_der}\\
& \leq K_5(\omega)d(\gamma, \delta)^{\nu}
\end{align}
where (\ref{eq:pos_der}) is true because $\frac{x}{1-e^{-px}}$ with any $p>0$ is a function with positive derivative for $x \in [0,1]$ and $\lim_{x \to 0}\frac{x}{1-e^{-px}} = \frac{1}{p}$. Hence, $0 < \frac{1}{p} \leq \frac{x}{1-e^{-px}} \leq \frac{1}{1-e^{-p}}$. Note that $K_5 = \frac{K_4}{1-e^{\bar{a}-\hat{a}}}$, depends on $\bar{a}-\hat{a}$.

Replacing $\tilde{\tau}_1$ and $\tilde{\tau}_2$ we have
\[\log\tilde{\tau}_1 \geq -K_5(\omega)d(\gamma, \delta)^{\nu}\] and 
\[\log\tilde{\tau}_2 \leq K_5(\omega)d(\gamma, \delta)^{\nu}\] so that replacing in (\ref{eq:Note_1}) and using (\ref{eq:theta_+}) we get
\begin{equation}
\theta_{\hat{a}}(\rho', \rho'')\leq 2K(\omega)d(\gamma, \delta)^{\mu'} + 2K_5(\omega)d(\gamma, \delta)^{\nu},
\end{equation}
as a bound for the first term in $(\ref{eq:lemma_4})$.
Then Inequality (\ref{eq:lemma_4}) becomes, 
\begin{align}
|\log \int_{\delta_j}\phi \rho' - \log \int_{\delta_j}\phi \rho''| &\leq 2Kd(\gamma, \delta)^{\mu'} + 2K_5d(\gamma, \delta)^{\nu} + Kd(\gamma, \delta)^{\mu'}\\
& \leq K_0(\omega)d(\gamma, \delta)^{\nu},
\end{align}
with $K_0 = 3K + 2K_5$, because $\mu' > \nu$.
Finally combining with (\ref{eq: expanding_lambda}) and (\ref{eq:cond_C}) we have:
\begin{equation}
\Big|\log \frac{\int_\gamma\scL_{\omega}\phi \rho}{\int_\delta\scL_{\omega}\phi \pi^*\rho}\Big| \leq (c\lambda_u^{\nu}+ K_0)d(\gamma, \delta)^{\nu'},
\end{equation}
because $\nu \geq \nu'$ so that $\scL_{\omega}\phi \in \mathscr{C}(c',a',\mu', \nu')$ with $c' \geq c\lambda_u^{\nu}+ K_0$.
\end{proof}
\subsection{Construction of covariant cone family}
The next theorem shows that there exists a random family of cones, invariant under the filtering operators $ \scL_{\omega}$.
Recall the Definition~\ref{def:cone} of the cone $\cC(c, \hat{a}, a, \mu, \nu, \hat{\mu})$.
\begin{proposition}\label{thm:one}
  For any constant $\delta$ such that $\max(\lambda_s^{\mu},\lambda_s^{\hat{\mu}},\lambda_u^{\nu})< \delta < 1$ and for $0< \hat{\mu}, \nu \leq 1$  
  there exist almost surely finite random variables $a_w, \hat{a}_w, c_w$ such that
  \begin{equation}
    \scL_{\omega}
    \cC(c_{\omega},\delta\hat{a}_{\omega},\delta a_{\omega}, \mu,\nu,\hat{\mu})
    \subseteq \cC(\delta c_{T\omega},\hat{a}_{T\omega},a_{T\omega}, \mu,\nu,\hat{\mu}),
  \end{equation}
  where $\mu = \hat{\mu} + \nu$.
  In particular,
  \beqn{equ:4.100}
  \scL_{\omega}
  \cC(c_{\omega},\hat{a}_{\omega},a_{\omega}, \mu,\nu,\hat{\mu})
  \subseteq \cC(c_{T\omega},\hat{a}_{T\omega},a_{T\omega}, \mu,\nu,\hat{\mu}).
  \eeq
\end{proposition}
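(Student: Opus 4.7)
The plan is to invoke Lemmas~\ref{lemma:set_A} and~\ref{lemma:condition_C} to reduce the stated covariance inclusion to a system of pointwise scalar inequalities relating the parameters $a_\omega,\hat{a}_\omega,c_\omega$ at $\omega$ to those at $T\omega$, and then to solve the resulting linear recursions explicitly by geometric series over the orbit of $T$. The key enabling fact is the temperedness of $\bar{G}$ from Assumption~\ref{ass:Lipschitz}(\ref{ass:Lipschitz_tempered}), which guarantees that any series of the form $\sum_n \beta^n \bar{G}(T^{\pm n}\omega)$ with $|\beta|<1$ is almost surely finite. I treat the $\mathscr{A}$ and $\mathscr{C}$ components of $\cC$ separately.

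For the $\mathscr{A}$ component, Lemmas~\ref{lemma:set_A} and~\ref{lemma:test_cone} show that $\scL_{\omega}\mathscr{A}(\delta\hat{a}_\omega,\hat{\mu})\subseteq\mathscr{A}(\hat{a}_{T\omega},\hat{\mu})$ holds provided
$$\delta\hat{a}_\omega\;\geq\;\lambda_s^{\hat{\mu}}\bigl(\hat{a}_{T\omega}+\bar{G}(\omega)\bigr).$$
Since $\eta:=\delta/\lambda_s^{\hat{\mu}}>1$ by the hypothesis on $\delta$, taking equality yields the explicit \emph{pullback series}
$$\hat{a}_\omega\;:=\;\sum_{n=0}^{\infty}\eta^{-(n+1)}\,\bar{G}(T^{n}\omega),$$
which is a.s.\ finite. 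Defining $a_\omega$ analogously with $\mu$ in place of $\hat{\mu}$ gives condition~(a) of Lemma~\ref{lemma:condition_C}. For conditions~(b) and~(c), combining Lemma~\ref{lemma:test_cone} with Lemma~\ref{lemma:Note2} bounds the images of $\cD(a_{T\omega},\mu,\gamma)$ under $\pi_j^{*}\scL_j^{\omega}$ and $\scL_j^{\omega}\pi^{*}$ inside $\cD(\alpha_\omega,\mu,\delta_j)$, where $\alpha_\omega$ is an affine expression in $a_\omega,a_{T\omega},\bar{G}(\omega)$ and the constants $a_0,K_1,K_2,K_3$. Since $a_\omega$ and $a_{T\omega}$ are tempered, so is $\alpha_\omega$, and I absorb these extra contributions simply by replacing $\bar{G}$ in the defining series of $\hat{a}_\omega$ by the larger but still tempered driving term $\bar{G}(\omega)+c_1 a_{T\omega}+c_2$ for appropriate constants $c_1,c_2$ depending on $a_0,\delta,\lambda_s^\mu,\epsilon$. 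The enlarged $\hat{a}_\omega$ then dominates $\alpha_\omega/((1-\epsilon)\delta)$ for a fixed $\epsilon\in(0,1)$, delivering (b) and (c).

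For the $\mathscr{C}$ parameter, Lemma~\ref{lemma:condition_C} produces the new bound $c' = \lambda_u^{\nu} c_\omega + K_0(\omega)$, where $K_0(\omega)$ is via Equation~\eqref{eq:K} an affine function of $\bar{G}(\omega),a_\omega,a_{T\omega},\hat{a}_\omega$ and is thus itself tempered. Requiring $\delta c_{T\omega}\geq\lambda_u^{\nu}c_\omega+K_0(\omega)$ becomes, with $\tau:=\lambda_u^{\nu}/\delta<1$, a linear forward recursion that I solve in the opposite direction by the \emph{pushforward series}
$$c_\omega\;:=\;\delta^{-1}\sum_{n=0}^{\infty}\tau^{n}K_0(T^{-n-1}\omega),$$
again a.s.\ finite. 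This completes the first inclusion. The specialisation~\eqref{equ:4.100} then follows from the monotonicity of $\cC$, which is increasing in $c$ and decreasing in $a$ and $\hat{a}$: since $\delta<1$, both $\cC(c_\omega,\hat{a}_\omega,a_\omega,\mu,\nu,\hat{\mu})\subseteq\cC(c_\omega,\delta\hat{a}_\omega,\delta a_\omega,\mu,\nu,\hat{\mu})$ and $\cC(\delta c_{T\omega},\hat{a}_{T\omega},a_{T\omega},\mu,\nu,\hat{\mu})\subseteq\cC(c_{T\omega},\hat{a}_{T\omega},a_{T\omega},\mu,\nu,\hat{\mu})$ hold, sandwiching the desired inclusion between those of the first statement.

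The main subtlety I expect to confront is the mutual coupling of $a_\omega,\hat{a}_\omega$ and $a_{T\omega}$ introduced by conditions (b), (c) of Lemma~\ref{lemma:condition_C}, which forces the augmented choice of $\hat{a}_\omega$ above. This is nevertheless possible because all three geometric decay rates $\lambda_s^{\mu}/\delta$, $\lambda_s^{\hat{\mu}}/\delta$ and $\lambda_u^{\nu}/\delta$ are strictly less than $1$ by the hypothesis on $\delta$, so that even after enlarging the driving term by polynomially many tempered terms the defining series remain summable almost surely.
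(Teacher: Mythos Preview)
Your proposal is correct and essentially reproduces the paper's own proof: both reduce the inclusion via Lemmas~\ref{lemma:test_cone}--\ref{lemma:condition_C} to the same system of scalar recursions, solve $a_\omega,\hat a_\omega$ by forward geometric series over the orbit (augmenting the driver of $\hat a_\omega$ to accommodate conditions~(b),(c)), solve $c_\omega$ by the backward series $c_\omega=\delta^{-1}\sum_{k\ge0}K_0(T^{-k-1}\omega)\lambda_c^k$, appeal to temperedness for a.s.\ finiteness, and obtain~\eqref{equ:4.100} by monotonicity of $\cC$ in its parameters. The only cosmetic difference is ordering: the paper fixes $a_\omega$ first so that the enlarged driver $\tilde G=\bar G+G_1+G_2$ for $\hat a_\omega$ is already available, rather than being introduced retroactively as in your sketch.
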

\begin{proof}
First we note that by Lemmas~\ref{lemma:test_cone} and~\ref{lemma:Note2}, if $\rho \in \mathcal{D}(a', \mu', \gamma)$ then,
\begin{equation}\label{eq:Note_3_1}
\rho' = \pi_j^*\scL_j^{\omega}\rho \in \mathcal{D}((a'+\bar{G}(\omega))\lambda_s^{\mu'}a_0^{\mu'}+a_0, \mu', \delta_j),
\end{equation}
and
\begin{equation}\label{eq:Note_3_2}
\rho'' =\scL_j^{\omega}(\pi^*\rho) \in \mathcal{D}((a'a_0^{\mu'}+a_0+\bar{G}(\omega))\lambda_s^{\mu'}, \mu', \delta_j).
\end{equation}
Then, by Lemmas~\ref{lemma:test_cone},~\ref{lemma:set_A} and~\ref{lemma:condition_C}, with $\mu = \mu'$, and $\nu' = \nu$, and the above equations (\ref{eq:Note_3_1} and~\ref{eq:Note_3_2}), the following equations need to be satisfied:
\begin{align}
  &\delta a_{\omega} \geq (a_{T\omega}+ \bar{G}(\omega))\lambda_s^{\mu},\label{eq:a_1}\\
&\delta \hat{a}_{\omega} \geq (a_{T\omega} +\bar{G}(\omega))\lambda_s^{\mu}a_0^{\mu} + a_0,\label{eq:a_2}\\
&\delta \hat{a}_{\omega} \geq (a_{T\omega}a_0^{\mu} +a_0+\bar{G}(\omega))\lambda_s^{\mu},\label{eq:a_3}\\
&\delta \hat{a}_{\omega} \geq (\hat{a}_{T\omega}+ \bar{G}(\omega))\lambda_s^{\hat{\mu}},\label{eq:a_4}\\
& c_{T\omega} \geq \delta c_{\omega}\lambda_u^{\nu} + K_0(\omega).\label{eq:c}
\end{align}
Pick $\delta$ so that $\lambda_a:= \lambda_s^{\mu}\delta^{-1} < 1$ and let
\begin{equation}\label{eq:a_omega}
a_{\omega}:= \sum_{k=0}^{\infty}\bar{G}(T^k\omega)\lambda_a^{k+1},
\end{equation}
Then $a$ is an almost surely finite random variable, given also our assumptions on $G(\omega)$.
To see that (\ref{eq:a_1}) is satisfied, note that
\begin{align*}
(a_{T\omega}+ \bar{G}(\omega))\lambda_s^{\mu} &= 
\sum_{k=0}^{\infty}\bar{G}(T^{k+1}\omega)\lambda_{a}^{k+1} + \bar{G}(\omega))\lambda_s^{\mu} \\
&=\delta(\sum_{k=0}^{\infty}\bar{G}(T^{k+1}\omega)\lambda_{a}^{k+1} + \bar{G}(\omega))\lambda_{a} \\
&=\delta(\sum_{k=1}^{\infty}\bar{G}(T^{k}\omega)\lambda_{a}^{k+1} + \bar{G}(\omega)\lambda_{a}) \\
&= \delta\sum_{k=0}^{\infty}\bar{G}(T^k\omega)\lambda_{a}^{k+1} = \delta a_\omega,
\end{align*}
as required.
Next we show that there is an almost surely finite random variable $\hat{a}$ that satisfies Equations~(\ref{eq:a_2}-\ref{eq:a_4}).
By (\ref{eq:a_1}),
\begin{equation}\label{eq:pi_cone_map}
\delta \hat{a}_{\omega} \geq a_{\omega}a_0^{\mu} + a_0 = a_0^{\mu}\sum_{k=0}^{\infty}\bar{G}(T^k\omega)\lambda_a^{k+1} + a_0,
\end{equation}
would be sufficient to satisfy (\ref{eq:a_2}).
Similarly, using (\ref{eq:a_omega}) in (\ref{eq:a_3}), we need that
\begin{equation}
\delta \hat{a}_{\omega} \geq  \big((\sum_{k=0}^{\infty}\bar{G}(T^{k+1}\omega)\lambda_a^{k+1})a_0^{\mu} + a_0 + \bar{G}(\omega)\Big)\lambda_s^{\mu}.
\end{equation}
Let $G_1(\omega): = \lambda_s^{-\hat{\mu}}(a_0^{\mu}\sum_{k=0}^{\infty}\bar{G}(T^k\omega)\lambda_a^{k+1} + a_0) $ and $G_2(\omega):=\big((\sum_{k=0}^{\infty}\bar{G}(T^{k+1}\omega)\lambda_a^{k+1})a_0^{\mu} + a_0 + \bar{G}(\omega)\Big)\lambda_s^{\mu-\hat{\mu}}$ and let
\begin{equation}\label{eq:a_hat}
\hat{a}_{\omega} := \sum_{k=0}^{\infty}\tilde{G}(T^k\omega)\lambda_{\hat{a}}^{k+1},
\end{equation}
 with $\tilde{G}(\omega) = \bar{G}(\omega) + G_1(\omega) + G_2(\omega)$, $\lambda_{\hat{a}} = \lambda_s^{\hat{\mu}}\delta^{-1} <1$. Then, by the same argument as for $a_{\omega}$ above, we have that $\hat{a}_{T\omega}$ satisfies 
 \begin{equation}\label{eq:a_5}
 \hat{a}_{\omega} = \delta^{-1}(\hat{a}_{T\omega}+ \bar{G}(\omega)+ G_1(\omega)+G_2(\omega))\lambda_s^{\hat{\mu}}.
 \end{equation}
 It can easily be seen that any $\hat{a}_{\omega}$ that satisfies (\ref{eq:a_5}) also satisfies (\ref{eq:a_2}), (\ref{eq:a_3}) and (\ref{eq:a_4}). Furthermore, it is almost surely finite.

Similarly, 
\begin{equation}\label{eq:c_omega}
c_{\omega}=\delta^{-1}\sum_{k=0}^{\infty}K_0(T^{-k-1}\omega)\lambda_c^{k}
\end{equation}
is a stationary and a.s. finite solutions to (\ref{eq:c}), where $\lambda_c  = \lambda_u^{\nu}\delta^{-1}<1$.
To see this, consider that
\begin{align*}
c_{\omega}\lambda_u^{\nu} + K_0(\omega) &= \delta^{-1}\sum_{k=0}^{\infty}K_0(T^{-k-1}\omega)\lambda_c^{k}\lambda_u^{\nu}  + K_0(\omega)\\
& = \sum_{k=0}^{\infty}K_0(T^{-k-1}\omega)\lambda_c^{k+1} + K_0(\omega)\\
&=\sum_{k=0}^{\infty}K_0(T^{-k}\omega)\lambda_c^{k} = \delta c_{T\omega}.
\end{align*}
\end{proof}
For simplicity of notation in what follows, we denote $\cC_{\omega} := \cC(c_{\omega},\hat{a}_{\omega},a_{\omega}, \mu,\nu,\hat{\mu})$.
We have the flowing Corollary of Proposition~\ref{thm:one} which will be useful in the proof of Proposition~\ref{thm:reg_cond_prob}.
\begin{corollary}\label{lemma:inv_P}
  $ \scP \cC_{\omega}  \subset \cC_{\omega}$
  for all $\omega \in \Omega.$
\end{corollary}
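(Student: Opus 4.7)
The plan is to adapt the proof of Proposition~\ref{thm:one} to the transfer operator $\scP$. My starting observation is that $\scP$ coincides with $\scL_\omega$ in the special case $g \equiv 1$, for which the log-Lipschitz constant $G$ in Assumption~\ref{ass:Lipschitz} may be taken to be identically zero. Consequently, Lemmas~\ref{lemma:test_cone}, \ref{lemma:set_A} and~\ref{lemma:condition_C} apply with $G \equiv 0$: the random quantities $\bar{G}(\omega)$ and $K_0(\omega)$ are replaced by the deterministic values $\bar{G}_* := (K_1+K_2)/\lambda_s^\mu$ and some $K_0^* > 0$ obtained by evaluating the expression for $K_0$ from Lemma~\ref{lemma:condition_C} at $G = 0$.

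Next, I would retrace the proof of Proposition~\ref{thm:one}, but now seeking a self-map $\scP \cC_\omega \subseteq \cC_\omega$ rather than a shift $\cC_\omega \to \cC_{T\omega}$, and with the contraction factor $\delta$ set to $1$ (no strict contraction is being claimed here). The same bookkeeping then produces the static analogues of Equations~\eqref{eq:a_1}--\eqref{eq:c}, of which the key ones are
\begin{align*}
  a_\omega & \geq (a_\omega + \bar{G}_*)\lambda_s^\mu, \\
  \hat{a}_\omega & \geq (\hat{a}_\omega + \bar{G}_*)\lambda_s^{\hat{\mu}} \quad \text{(plus the two further bounds coming from \eqref{eq:a_2} and \eqref{eq:a_3})}, \\
  c_\omega & \geq c_\omega \lambda_u^\nu + K_0^*.
\end{align*}

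To close the argument, I would verify each inequality directly from the explicit formulas \eqref{eq:a_omega}, \eqref{eq:a_hat} and~\eqref{eq:c_omega}. Since $G \geq 0$ enters monotonically into both $\bar{G}$ and $K_0$, the pointwise bounds $\bar{G}(T^k\omega) \geq \bar{G}_*$ and $K_0(T^{-k}\omega) \geq K_0^*$ hold. Moreover, the geometric ratios $\lambda_a, \lambda_{\hat{a}}, \lambda_c$ appearing in those series strictly exceed $\lambda_s^\mu, \lambda_s^{\hat{\mu}}, \lambda_u^\nu$ respectively, thanks to $\delta < 1$. Using the monotonicity of $x \mapsto x/(1-x)$ on $[0,1)$, a term-by-term comparison of the resulting geometric series yields the required lower bounds on $a_\omega, \hat{a}_\omega$ and $c_\omega$. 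The main obstacle I anticipate is purely bookkeeping---lining up the four inequalities involving $a_\omega$ and $\hat{a}_\omega$ and then checking that $c_\omega$ is large enough---but no new conceptual ingredient should be required beyond those already present in the proof of Proposition~\ref{thm:one}.
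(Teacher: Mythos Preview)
Your proposal is correct and follows essentially the same line as the paper's own proof: both recognise $\scP$ as the special case $G\equiv 0$ of $\scL_\omega$, rewrite the invariance conditions of Proposition~\ref{thm:one} as the static inequalities $a_\omega \geq (a_\omega+\bar G_*)\lambda_s^\mu$, $c_\omega \geq c_\omega\lambda_u^\nu + K_0^*$, etc., and then verify these directly from the explicit series~\eqref{eq:a_omega}, \eqref{eq:a_hat}, \eqref{eq:c_omega} by bounding each summand below by its deterministic part and exploiting $\lambda_a>\lambda_s^\mu$, $\lambda_c>\lambda_u^\nu$. The paper is terser---it dispatches the $\hat a_\omega$ case with ``follows similarly'' and phrases the $a_\omega$ condition in the equivalent inverted form $a_\omega < \lambda_s^{-\mu}(a_\omega-K_1-K_2)$---but the substance is identical.
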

\begin{proof} 
  Let $\phi \in \cC_{\omega} $.
  Then $\phi \in \scC(c_{\omega}, a_{\omega}, \mu, \nu)$ and
\begin{equation*}
c_{\omega}=\delta^{-1}\sum_{k=0}^{\infty}K_0(T^{-k-1}\omega)\lambda_c^{k} \, > \, \tilde{K}\sum_{k=0}^{\infty}\lambda_u^{k\nu} = \frac{\tilde{K}}{1-\lambda_u^{\nu}},
\end{equation*}
and
\begin{equation*}
a_{\omega}= \sum_{k=0}^{\infty}\bar{G}(T^k\omega)\lambda_a^k > (K_1 + K_2)\lambda_s^{-\mu}\sum_{k=0}^{\infty}\lambda_s^{k\mu} > \frac{K_1 + K_2}{1-\lambda_s^{\mu}} ,
\end{equation*} where
$\tilde{K}$ and $(K_1 + K_2)/\lambda_u^{\nu}$ are deterministic components (due to action of $\scP$) of $K_0$ and $\bar{G}$.

Note that $\scP$ induces a dynamical systems on the cone parameters. In particular, the cone parameter $c_{\omega}$ is mapped to $c_{\omega}\lambda_u^{\nu} + \tilde{K}$, where $\tilde{K}>0$ is given by Equation (\ref{eq:K}) with $G(\omega)$ set to zero. Cone parameter $a_{\omega}$, on the other hand, is mapped to $\lambda_u^{-\mu}(a_{\omega} - K_1 - K_2)$ (See Lemma~\ref{lemma:test_cone}, with $  G(\omega) $ set to zero).

We note that the $c_{\omega}$ inequality above implies that $c_{\omega} > c_{\omega}\lambda_u^{\nu} + \tilde{K}$. Thus $\scP\phi \in \mathscr{C}(c_{\omega},\lambda_u^{-\mu}(a_{\omega} - K_1 - K_2), \mu, \nu)$. Note that if $\mathcal{D}(a_{\omega}) \subset \mathcal{D}(\lambda_u^{-\mu}(a_{\omega} - K_1 - K_2))$, then also $\scP\phi \in \mathscr{C}(c_{\omega},a_{\omega}, \mu, \nu)$, and this is the case, since by the inequality for $a_{\omega}$ above, $a_{\omega} < \lambda_u^{-\mu}(a_{\omega} - K_1 - K_2)$. Invariance of $\mathscr{A}(\hat{a}_{\omega}, \hat{\mu})$ follows similarly from the definition of $\hat{a}$. \end{proof}

The Hilbert projective metric $\theta_{\omega}$ on this cone $\mathcal{C}_{\omega}$ is derived in Section 4 of \cite{Viana1997}. In particular, $\alpha_{\omega}(\phi_1,\phi_2 )$ is given by
\begin{align}\label{eq:alpha}
\inf\Big\{\frac{\int_{\gamma} \phi_2 \hat{\rho}}{\int_{\gamma} \phi_1 \hat{\rho}}, \frac{\int_{\gamma} \phi_2 \rho}{\int_{\gamma} \phi_1 \rho}\eta_{\omega}(\rho,\pi^*\rho, \phi_1, \phi_2), \frac{\int_{\delta} \phi_2 \pi^*\rho}{\int_{\delta} \phi_1 \pi^*\rho}\eta_{\omega}(\pi^*\rho,\rho, \phi_1, \phi_2)\Big\},
\end{align}
where infimum is taken over $\hat{\rho} \in \mathcal{D}(\hat{a}_{\omega}, \hat{\mu}, \gamma)$ and $\rho \in \mathcal{D}(a_{\omega}, \mu, \gamma)$ and all pairs of local stable leaves $\gamma, \delta$ and
\begin{align}
\eta_{\omega}(\rho,\pi^*\rho, \phi_1, \phi_2) = \frac{\exp({c_{\omega}}d(\gamma, \delta)^{\nu}) - \int_{\delta} \phi_2 \pi^*\rho/ \int_{\gamma} \phi_2 \rho}{\exp(c_{\omega}d(\gamma, \delta)^{\nu}) - \int_{\delta} \phi_1 \pi^*\rho/ \int_{\gamma} \phi_1 \rho}.
\end{align}

\begin{proposition} \label{thm:two} The $\theta_{T\omega}$ - diameter of $ \scL_{\omega}\mathcal{C}_{\omega}$ is a.s. finite. That is
 \begin{equation}\label{eq:Diam}
 \bar{D}(T\omega) := \sup\{\theta_{T\omega}(\scL_\omega\phi_1,\scL_\omega\phi_2 ): \phi_1, \phi_2 \in \mathcal{C}_{\omega} )\} < \infty,
 \end{equation}
 for almost all $\omega$.
\end{proposition}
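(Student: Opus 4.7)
The plan is to follow the finite-diameter argument of~\cite{Viana1997}, Lemma~4.6, exploiting the strict shrinking provided by Proposition~\ref{thm:one}. Since smaller values of $\hat{a}, a$ \emph{enlarge} the cones $\scA, \scC$, the cone $\cC_\omega$ is contained in the larger source cone $\cC(c_\omega, \delta\hat{a}_\omega, \delta a_\omega, \mu, \nu, \hat{\mu})$. The first inclusion in Proposition~\ref{thm:one} therefore gives
\[
\scL_\omega \cC_\omega \subseteq \cC(\delta c_{T\omega}, \hat{a}_{T\omega}, a_{T\omega}, \mu, \nu, \hat{\mu}) \subsetneq \cC_{T\omega},
\]
where the image cone carries the strictly smaller H\"older exponent $\delta c_{T\omega}$ in place of the $c_{T\omega}$ appearing in the definition of $\cC_{T\omega}$. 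This "slack" is precisely what will force finite diameter.

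Using this strict improvement, I would first control the $\eta$-factors in the formula for $\alpha_{T\omega}(\phi_1,\phi_2)$ displayed before the proposition. Writing $A := c_{T\omega}\, d(\gamma, \delta)^\nu \in [0, c_{T\omega}]$, any $\phi \in \scL_\omega \cC_\omega$ satisfies $r := \int_\delta \phi \,\pi^*\rho / \int_\gamma \phi \rho \in [e^{-\delta A}, e^{\delta A}]$, so the numerator and denominator of $\eta_{T\omega} = (e^A - r_2)/(e^A - r_1)$ are both strictly positive and sandwiched:
\[
\frac{e^A - e^{\delta A}}{e^A - e^{-\delta A}} \leq \eta_{T\omega}(\rho, \pi^*\rho, \phi_1, \phi_2) \leq \frac{e^A - e^{-\delta A}}{e^A - e^{\delta A}}.
\]
Taylor expansion at $A = 0$ (giving limits $(1-\delta)/(1+\delta)$ and $(1+\delta)/(1-\delta)$) together with boundedness away from $0$ and $\infty$ on $A \in (0, c_{T\omega}]$ (note $d(\gamma, \delta) \leq 1$) yields $\eta_{T\omega} \in [\eta_-, \eta_+]$ with $0 < \eta_- \leq \eta_+ < \infty$ depending only on $\delta$ and $c_{T\omega}$.

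The main obstacle is then to bound uniformly the integral ratios $\int \phi_2 \hat\rho / \int \phi_1 \hat\rho$ (and the analogues involving $\rho, \pi^*\rho$) that also appear in the $\alpha_{T\omega}, \beta_{T\omega}$ formula. Writing $\phi_i = \scL_\omega \psi_i$ with $\psi_i \in \cC_\omega$ and expanding
\[
\int_\gamma \phi_i \hat\rho = \sum_j \int_{\gamma_j} \psi_i \cdot \scL_j^\omega \hat\rho,
\]
Lemma~\ref{lemma:test_cone} combined with the choice of $\hat{a}_\omega$ in the proof of Proposition~\ref{thm:one} (specifically Equation~\eqref{eq:a_5}) places $\scL_j^\omega \hat\rho$ in the strict sub-cone $\mathcal{D}(\delta \hat{a}_\omega, \hat{\mu}, \gamma_j) \subset \mathcal{D}(\hat{a}_\omega, \hat{\mu}, \gamma_j)$, which by Lemma~\ref{lemma:finite_diameter} has bounded $\theta_{\hat{a}_\omega}$-diameter $D(\hat{a}_\omega) < \infty$. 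Applying Lemma~\ref{lemma:conditionB} to $\psi_i \in \scA(\hat{a}_\omega, \hat{\mu})$ with a fixed reference density $\bar\rho_j$ of unit mass on $\gamma_j$ yields the comparability
\[
\int_{\gamma_j} \psi_i \cdot \scL_j^\omega \hat\rho \;\asymp\; \Bigl(\int_{\gamma_j} \scL_j^\omega \hat\rho\Bigr) \cdot \int_{\gamma_j} \psi_i \bar\rho_j,
\]
with multiplicative constants $e^{\pm D(\hat{a}_\omega)}$ independent of $\psi_i$ and $\hat\rho$. The $\hat\rho$-dependent weights $\int_{\gamma_j} \scL_j^\omega \hat\rho$ then appear as common factors in the ratio $\int \phi_2 \hat\rho / \int \phi_1 \hat\rho$, squeezing it between the minimum and the maximum over $j$ of $\int_{\gamma_j} \psi_2 \bar\rho_j / \int_{\gamma_j} \psi_1 \bar\rho_j$, up to bounded constants. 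Controlling the variation of these $j$-dependent ratios uniformly over $\psi_i \in \cC_\omega$---by exploiting the strict $\scC$-condition satisfied by $\psi_i$ together with a finite cover of $Q$ by small open sets on which pre-image branches of $f$ are controlled---is the technically most delicate step. Combining with the bound on the $\eta$-factors yields $\theta_{T\omega}(\phi_1, \phi_2) \leq \log(\eta_+/\eta_-) + K\,\log C(\omega)$ for some almost surely finite $C(\omega)$ (depending on $\omega$ only through the cone parameters $c_\omega, \hat{a}_\omega, a_\omega$, all a.s.\ finite by Proposition~\ref{thm:one}), proving $\bar{D}(T\omega) < \infty$. The broader theme is that although the test cone $\mathcal{D}(\hat{a}_{T\omega}, \hat{\mu}, \gamma)$ itself has infinite $\theta$-diameter, the regularisation effected by $\scL_j^\omega$ into a strict sub-cone is what delivers finite diameter---the same contraction mechanism that underpins cone arguments for the plain transfer operator.
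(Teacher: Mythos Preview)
Your proposal is essentially the paper's argument: bound the $\eta$-factors via the strict improvement $\delta c_{T\omega}$ (in fact yielding constants $\tau_1^c,\tau_2^c$ depending only on $\delta$, not on $c_{T\omega}$), reduce $\theta_{T\omega}$ to the simpler metric $\theta_+^{\hat{a}_{T\omega}}$, push $\hat\rho$ through $\scL_j^\omega$ into the strict sub-cone $\mathcal{D}(\delta\hat a_\omega,\hat\mu,\gamma_j)$, and invoke Lemmas~\ref{lemma:finite_diameter} and~\ref{lemma:conditionB}. For the step you flag as delicate, the paper compares two arbitrary leaves $\gamma_1,\gamma_2$ directly by first replacing the test densities with normalised constants $1_{\gamma_i}$ via Lemma~\ref{lemma:conditionB}, then using the (non-strict) $\scC(c_\omega,\dots)$ condition on $\psi_i$ together with the projection $\pi:\gamma_2\to\gamma_1$ to get a factor $\exp(c_\omega d(\gamma_1,\gamma_2)^\nu)\le e^{c_\omega}$---no finite cover is required.
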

\begin{proof}
We note that in view of Proposition~\ref{thm:one},
\begin{align}
\bar{D}(T\omega) &= \sup\{\theta_{T\omega}(\phi_1,\phi_2 ): \phi_1, \phi_2 \in \scL_\omega\mathcal{C}(c_{\omega},\hat{a}_{\omega},a_{\omega} )\}\\ & \leq \sup\{\theta_{T\omega}(\phi_1,\phi_2 ): \phi_1, \phi_2 \in \mathcal{C}(\delta c_{T\omega},\hat{a}_{T\omega}, a_{T\omega} )\},
\end{align}
for some constant $\delta < 1 $, $\scL_\omega\mathcal{C}(c_{\omega},\hat{a}_{\omega},a_{\omega} ) \subseteq \mathcal{C}(\delta c_{T\omega}, \hat{a}_{T\omega}, a_{T\omega} )$.
Let $\phi_1, \phi_2 \in \mathcal{C}(\delta c_{T\omega}, \hat{a}_{T\omega}, a_{T\omega} )$ and $\rho \in \mathcal{D}(a_{T\omega}, \mu, \gamma)$. Then 
\begin{align}
\eta_{T\omega}(\rho,\pi^*\rho, \phi_1, \phi_2) &= \frac{\exp({c_{T\omega}}d(\gamma, \delta)^{\nu}) - \int_{\delta} \phi_2 \pi^*\rho/ \int_{\gamma} \phi_2 \rho}{\exp(c_{T\omega}d(\gamma, \delta)^{\nu}) - \int_{\delta} \phi_1 \pi^*\rho/ \int_{\gamma} \phi_1 \rho}.\\
& \geq \frac{\exp({c_{T\omega}}d(\gamma, \delta)^{\nu}) - \exp({\delta c_{T\omega}}d(\gamma, \delta)^{\nu})}{\exp({c_{T\omega}}d(\gamma, \delta)^{\nu}) -\exp({-\delta c_{T\omega}}d(\gamma, \delta)^{\nu})} \geq\tau^{c}_1,
\end{align} 
with \[\tau^c_1 : =\inf\{(z-z^{\delta})/(z-z^{-\delta}): z> 1\} \in (0,1), \] and similarly, $ \eta_{T\omega}(\rho,\pi^*\rho, \phi_1, \phi_2) \leq \tau^c_2, $ with  \[\tau^c_2 : =\sup\{(z-z^{-\delta})/(z-z^{\delta}): z> 1\} \in (1,\infty), \]
Similarly,
\begin{align}
\tau^c_1 \leq \eta_{T\omega}(\pi^*\rho, \rho, \phi_1, \phi_2) \leq \tau^c_2.
\end{align} 

Hence $  \eta_{T\omega}(\rho, \pi^*\rho, \phi_1, \phi_2), \eta_{T\omega}(\pi^*\rho, \rho, \phi_1, \phi_2) \in [\tau^c_1, \tau^c_2]$ and therefore, $\alpha_{T\omega}(\phi_1, \phi_2) \geq \tau^c_1\alpha_+(\phi_1, \phi_2)$ and $\beta_{T\omega}(\phi_1, \phi_2) \leq \tau^c_2\beta_+(\phi_1, \phi_2)$, so that \[\theta_{T\omega}(\phi_1, \phi_2) \leq \theta_{+}^{\hat{a}_{T\omega}}(\phi_1, \phi_2) + \log \frac{\tau^c_2}{\tau^c_1},\]
where
\begin{equation}\label{eq:theta_+_a}
\theta^{\hat{a}_{T\omega}}_+(\phi_1, \phi_2) = \log \frac{\sup \int_{\gamma} \phi_2 \rho/\int_{\gamma} \phi_1 \rho}{\inf \int_{\gamma} \phi_2 \rho/\int_{\gamma} \phi_1 \rho }
\end{equation}
 and supremum and infimum are taken over all $\rho \in \mathcal{D}(\hat{a}_{T\omega},\hat{\mu}, \gamma)$ and local stable leaves $\gamma.$ (Note that by Equation (\ref{eq:alpha}) the supremum is taken over $\rho \in \mathcal{D}(a_{T\omega},\mu, \gamma)$ and $\hat{\rho} \in \mathcal{D}(\hat{a}_{T\omega},\hat{\mu}, \gamma)$. However, by Proposition~\ref{thm:one}, we have that $a_{T\omega} \leq \hat{a}_{T\omega}$ by inspection of the equations for $a$ and $\hat{a}$ (see Equations (\ref{eq:a_omega}) and (\ref{eq:a_hat})). In addition, if $\rho \in \mathcal{D}(a_{T\omega},\mu, \gamma) $, then $\pi^*\rho \in \mathcal{D}(a_{T\omega}a_0^{\mu} + a_0,\mu, \delta)$ by Lemma~\ref{lemma:Note2} which implies $\pi^*\rho \in \mathcal{D}(\hat{a}_{T\omega},\hat{\mu}, \delta)$ by (\ref{eq:pi_cone_map})).

In order to show that $\theta^{\hat{a}_{T\omega}}_+$-diameter of $ \scL_{\omega}\mathcal{C}(c_{\omega},\hat{a}_{\omega},a_{\omega} ) $ is bounded it remains to prove that
\begin{align}\label{eq:theta_+2}
\theta^{\hat{a}_{T\omega}}_+(\scL_{\omega}\phi_1, \scL_{\omega}\phi_2)  &= \log \frac{\sup \int_{\gamma} \scL_{\omega}\phi_2 \rho/\int_{\gamma} \scL_{\omega}\phi_1 \rho}{\inf \int_{\gamma} \scL_{\omega}\phi_2 \rho/\int_{\gamma} \scL_{\omega}\phi_1 \rho } < \infty
\end{align}
 for all $\phi_1, \phi_2 \in \mathcal{C}(c_{\omega},\hat{a}_{\omega},a_{\omega} ),$ and supremum and infimum taken over $\rho \in \mathcal{D}(\hat{a}_{T\omega},\hat{\mu}, \gamma)$ and over all stable leaves $\gamma$. The above will hold if we can show a uniform bound on
\begin{equation}\label{eq:theta_+3}
\frac{\int_{\gamma'} \scL_{\omega}\phi_2 \rho'/\int_{\gamma'} \scL_{\omega}\phi_1 \rho'}{\int_{\gamma''} \scL_{\omega}\phi_2 \rho''/\int_{\gamma''} \scL_{\omega}\phi_1 \rho''}
\end{equation}
%% \begin{comment}
%% \begin{align}
%% &\frac{\int_{\gamma'} \scL_{\omega}\phi_2 \rho'/\int_{\gamma'} \scL_{\omega}\phi_1 \rho'}{\int_{\gamma''} \scL_{\omega}\phi_2 \rho''/\int_{\gamma''} \scL_{\omega}\phi_1 \rho''}\nonumber\\
%% & =\frac{\int_{\gamma'} \scL_{\omega}\phi_2 \rho'}{\int_{\gamma''} \scL_{\omega}\phi_2 \rho''}\frac{\int_{\gamma''} \scL_{\omega}\phi_1 \rho''}{\int_{\gamma'} \scL_{\omega}\phi_1 \rho'}\nonumber\\
%% & = \frac{\int_{\gamma''}\rho''}{\int_{\gamma'}\rho'}\frac{\int_{\gamma'} \scL_{\omega}\phi_2 \rho'}{\int_{\gamma''} \scL_{\omega}\phi_2 \rho''}\frac{\int_{\gamma''} \scL_{\omega}\phi_1 \rho''}{\int_{\gamma'} \scL_{\omega}\phi_1 \rho'}\frac{\int_{\gamma'}\rho'}{\int_{\gamma''}\rho''}
%% \end{align}
%% \end{comment}
for all $\rho' \in \mathcal{D}(\hat{a}_{T\omega}, \hat{\mu}, \gamma'),$ $\rho'' \in \mathcal{D}(\hat{a}_{T\omega}, \hat{\mu}, \gamma'')$ and $\phi_1, \phi_2 \in \mathcal{C}(c_{\omega},\hat{a}_{\omega},a_{\omega} )$.
Hence it is sufficient to show that 
\[\frac{\int_{\gamma'} \scL_{\omega}\phi \rho'}{\int_{\gamma''} \scL_{\omega}\phi \rho''}\] is uniformly bounded for all $\phi \in \mathcal{C}(c_{\omega},\hat{a}_{\omega},a_{\omega} )$ and $\rho' \in \mathcal{D}(\hat{a}_{T\omega}, \hat{\mu}, \gamma'),$ $\rho'' \in \mathcal{D}(\hat{a}_{T\omega}, \hat{\mu}, \gamma'')$ with $\int_{\gamma'}\rho' = \int_{\gamma''}\rho'' = 1. $
%
%% \begin{comment}
%% Using the normalisation, by the mean value theorem, there is some $y$ such that 
%% \begin{equation} \label{eq: norm}
%% \rho''(y)\int_{\gamma''} \idd m = 1,
%% \end{equation}
%% and similarly for $\rho'.$
%% \end{comment}
%
We can write
\begin{equation}\label{eq:norm2}
\frac{\int_{\gamma'} \scL_{\omega}\phi \rho'}{\int_{\gamma''} \scL_{\omega}\phi \rho''} = \frac{\sum_{j=1}\int_{\gamma_j'} \phi \rho_j'}{\sum_{j=1}\int_{\gamma_j''} \phi \rho_j''} = \frac{\sum_{j=1}\int_{\gamma_j'} \rho_j' \cdot \Big(\int_{\gamma_j'} \phi \rho_j'/\int_{\gamma_j'} \rho_j'\Big)}{\sum_{j=1}\int_{\gamma_j''} \rho_j'' \cdot \Big(\int_{\gamma_j''} \phi \rho_j''/\int_{\gamma_j''} \rho_j''\Big)} .
\end{equation} 

Note that $\rho_j', \rho_j''$ are no longer normalised, but
\begin{align}
\int_{\gamma_j''} \rho_j''& = \int_{\gamma_j''} \rho'' \circ f \cdot g(\omega) \circ f \cdot |\det (\DD f|\gamma_j'')| \ |\det \DD f|^{-1} \\
& =\int_{f(\gamma_j'')} g(\omega)\rho'' |\det \DD f^{-1}| \\
& \geq g(\omega,x)\Gamma_1 \inf \rho'',
\end{align}
and similarly,
\[\int_{\gamma_j'} \rho_j' \leq g(\omega, y)\Gamma_2 \sup \rho',\]where we have used the mean value theorem on the likelihood function $g$ and where $\Gamma_1$ and $\Gamma_2$ are positive constants as in \cite{Viana1997} and only depend on the uniform bound of the Jacobian and the Riemannian volume of the image of the local stable leaves.

Next, choose $y$ so that $\rho''(y)\int_{\gamma''} \idd m_{\gamma} = 1$. Then,
\begin{align}
\rho''(x) &\geq \exp(-\hat{a}_{T\omega}d(x,y)^{\hat{\mu}})\rho''(y)\\
& \geq \exp(-\hat{a}_{T\omega} )\frac{1}{\int_{\gamma''} \idd m_{\gamma}}
\end{align}
Hence,
\begin{equation}\label{eq:three}
\frac{\int_{\gamma_j'}\rho_j'}{\int_{\gamma_j''}\rho_j''} \leq \frac{g(\omega, y)\Gamma_2 \sup \rho'\int_{\gamma''} \idd m_{\gamma}}{g(\omega, x)\Gamma_1 \inf \rho''\int_{\gamma'} \idd m_{\gamma}} \leq \frac{\Gamma_2 \exp(2\hat{a}_{T\omega})\exp(G(\omega))}{\Gamma_1},
\end{equation}
where we have again used the uniform lower and upper bounds on the Riemannian volume of locals stable leaves and included it in the $\Gamma_1$ and $\Gamma_2$ constants.
%% \begin{comment}
%% suppose $\frac{a}{c}\leq e^{K}$ and $\frac{b}{d}\leq e^{K}$ then 
%% $\frac{a+b}{c+d} \leq \frac{e^{k}c + e^{k}d }{c+d} = e^{K}$
%% \end{comment}
%
Therefore normalising in (\ref{eq:norm2}) would affect the quotient in (\ref{eq:norm2}) by a factor of $\Gamma_2 \exp (2\hat{a}_{T\omega} + G(\omega)) / \Gamma_1$.
%
%% \begin{comment}
%% suppose $\frac{a}{c}\leq e^{K}$ and $\frac{b}{d}\leq e^{K}$ then 
%% $\frac{a+b}{c+d} \leq \frac{e^{k}c + e^{k}d }{c+d} = e^{K}$
%% Suppose that we actually have a bound on normalised version:
%% $\frac{\frac{a}{\tilde{a}}}{\frac{c}{\tilde{c}}}\leq e^K $ then
%% $\frac{a}{c}\frac{\tilde{c}}{\tilde{a}}\leq e^{K}$ so that
%% $\frac{a}{c}\leq e^{K}\frac{\tilde{a}}{\tilde{c}}$
%% \end{comment}
%% % 
Recall that $\scL^{\omega}_j\rho' =  \rho'_j$ for $j = 1,...,n$ and that, by Lemma~\ref{lemma:test_cone}, $\scL_j^{\omega}\mathcal{D}(\hat{a}_{T\omega},\hat{\mu},\gamma) \subseteq \mathcal{D}(\hat{a}_{\omega},\hat{\mu},\gamma_j)$ if $\hat{a}_{\omega} > (\hat{a}_{T\omega} + \bar{G}(\omega))\lambda_s^{\hat{\mu}}$, which holds true by the constriction of $\hat{a}$ in Proposition~\ref{thm:one} and in fact, $\hat{a}$ is constructed so that
 $\rho_j' \in \mathcal{D}(\delta\hat{a}_{\omega}, \hat{\mu}, \gamma_j')$ and $\rho_j'' \in \mathcal{D}(\delta\hat{a}_{\omega}, \hat{\mu}, \gamma_j'')$ for the constant $\delta < 1$ as defined in Proposition~\ref{thm:one}. Hence to obtain a bound in (\ref{eq:norm2}) almost surely, it is sufficient to show a bound on
\begin{equation}
\sup \frac{\int_{\gamma_2}\phi \rho_2}{\int_{\gamma_1}\phi \rho_1}
\end{equation}

with supremum over $\rho_1 \in \mathcal{D}(\delta\hat{a}_{\omega},\hat{\mu}, \gamma_1)$ and $\rho_2 \in \mathcal{D}(\delta\hat{a}_{\omega},\hat{\mu}, \gamma_2)$ and with $\int_{\gamma_1}\rho_1 =\int_{\gamma_2}\rho_2 = 1 $. Let $\theta_1$ and $\theta_2$ be the respective projective metrics. By Lemma~\ref{lemma:conditionB}, we have,
\begin{equation}
\int_{\gamma_1}\phi\rho_1 \geq \exp(-\theta_1(\rho_1, 1_{\gamma_1}))\int_{\gamma_1}\phi 1_{\gamma_1},
\end{equation}
and 
\begin{equation}
\int_{\gamma_2}\phi \rho_2 \leq \exp(\theta_2(\rho_2, 1_{\gamma_2}))\int_{\gamma_2}\phi 1_{\gamma_2},
\end{equation}
where $1_{\gamma_i}$ is the positive constant function on $\gamma_i$ such that $\int_{\gamma_i}1_{\gamma_i} = 1$, so that 
\begin{equation}\label{eq:one}
\frac{\int_{\gamma_2}\phi \rho_2}{\int_{\gamma_1}\phi \rho_1} \leq \frac{\exp(\theta_2(\rho_2, 1_{\gamma_2}))\int_{\gamma_2}\phi 1_{\gamma_2}}{\exp(-\theta_1(\rho_1, 1_{\gamma_1}))\int_{\gamma_1}\phi 1_{\gamma_1}}.
\end{equation}
Let $D_1(\hat{a}_{\omega})$ be the uniform (in $\gamma$) upper bound for the $\theta_{\hat{a}_{\omega}}$-diameter of $\mathcal{D}(\hat{\delta}\hat{a}_{\omega},\hat{\mu}, \gamma)$ in $\mathcal{D}(\hat{a}_{\omega},\hat{\mu}, \gamma)$, shown to exist in Lemma~\ref{lemma:finite_diameter}. Then we have
\begin{equation}\label{eq:D1}
\exp^{-D_1(\hat{a}_{\omega})} \leq \exp^{-\theta_1(\rho_1, 1_{\gamma_1})} \leq 1 \leq \exp^{\theta_2(\rho_2, 1_{\gamma_2})} \leq \exp^{D_1(\hat{a}_{\omega})}.
\end{equation}

Next, let $\tilde{1}: \gamma_2 \to  \mathbb{R}$ be given by $\tilde{1}(x) = 1_{\gamma_1}(\pi(x))\dtpi(x)$, where $\pi:~\gamma_2~\to~\gamma_1$. Since $\log\dtpi(x)$ is $a_0$- Lipschitz map, it follows that $\tilde{1} \in \mathcal{D}(a_0, 1, \gamma_2)$. Clearly also $1_{\gamma_2} \in \mathcal{D}(a_0, 1, \gamma_2)$. Note that by (\ref{eq:a_2}) we can deduce that \[\mathcal{D}(a_0, 1, \gamma_2) \subset \mathcal{D}(a_0, \mu, \gamma_2) \subset  \mathcal{D}(\bar{a}_{\omega}, \mu, \gamma_2) \subset \mathcal{D}(\hat{a}_{\omega}, \mu, \gamma_2).\]

Since $\bar{a}_{\omega} < \hat{a}_{\omega}$, it holds by Lemma~\ref{lemma:finite_diameter} that $ \mathcal{D}(\bar{a}_{\omega}, \mu, \gamma) $ has finite $\theta_{\hat{a}_{\omega}}$-diameter in  $ \mathcal{D}(\hat{a}_{\omega}, \mu, \gamma) $. Furthermore, the upper bound of the diameter does not depend on $\gamma$. 
Let $D_0(\hat{a}_{\omega})$ be the uniform (in $ \gamma $) upper bound for the $\theta_{\hat{a}_{\omega}}$-diameter of $ \mathcal{D}(\bar{a}_{\omega}, \mu, \gamma_2) $ in $ \mathcal{D}(\hat{a}_{\omega}, \mu, \gamma_2) $. Then 
\begin{align}
 \frac{\int_{\gamma_2}\phi 1_{\gamma_2}}{\int_{\gamma_1}\phi 1_{\gamma_1}} &\leq \frac{\int_{\gamma_2}\phi 1_{\gamma_2}}{\int_{\gamma_2}\phi \tilde{1}}\frac{\int_{\gamma_2}\phi \tilde{1}}{\int_{\gamma_1}\phi 1_{\gamma_1}}\\  &\leq \exp(\theta_{\hat{a}_{\omega}}(1_{\gamma_2}, \tilde{1}))\exp(c_{\omega}d(\gamma_1, \gamma_2)^{\nu}),\label{eq:lemma_1}\\
 &\leq \exp(D_0(\hat{a}_{\omega}))\exp(c_{\omega}d(\gamma_1, \gamma_2)^{\nu})\label{eq:two}
\end{align}
where we have again used Lemma~\ref{lemma:conditionB} with $b=1$ in (\ref{eq:lemma_1}). Putting (\ref{eq:D1}) and (\ref{eq:two}) together in (\ref{eq:one}) we have that
\begin{equation}
\frac{\int_{\gamma_2}\phi \rho_2}{\int_{\gamma_1}\phi \rho_1} \leq \exp(2D_1({\hat{a}_{\omega}}) + D_0({\hat{a}_{\omega}}) + c_{\omega})
\end{equation}
 and hence together with (\ref{eq:three}), we have that (\ref{eq:norm2}) is bounded by
 \begin{equation}\label{eq:theta_bound}
\frac{\int_{\gamma_j'} \scL_{\omega}\phi \rho'}{\int_{\gamma_j''} \scL_{\omega}\phi \rho''} \leq \frac{\Gamma_2}{\Gamma_1}\exp(2\hat{a}_{T\omega}+ G(\omega)+2D_1({\hat{a}_{\omega}}) +D_0({\hat{a}_{\omega}}) + c_{\omega}) := \Gamma_0(\omega).
 \end{equation}

Hence by (\ref{eq:theta_+2}) and  (\ref{eq:theta_+3}) $\theta_+$ is bounded by $\log \Gamma_0(\omega)^2$.

Therefore $\bar{D}(T\omega) \leq \log \Gamma_0(\omega)^2 + \log\frac{\tau_2}{\tau_1} < \infty$ a.s. since $\Gamma_0$ is almost surely finite.  \end{proof}

We note that by Proposition~\ref{prop:finite_diam}, a consequence of Theor\ref{thm:one} and~\ref{thm:two} is that $\scL_{\omega}$ is a strict contraction, in particular\\
\begin{corollary}\label{cor:contraction}
Let $\Lambda(\omega) = 1-e^{-\bar{D}(T\omega)}$. Then it holds that,
\begin{equation}\label{eq:contract}
\theta_{T\omega}(\scL_{\omega}\phi_1, \scL_{\omega}\phi_2) \leq \Lambda(\omega)\theta_{\omega}(\phi_1, \phi_2)
\end{equation}
for all $\phi_1, \phi_2 \in \mathcal{C}_{\omega}$, with $\Lambda(\omega) < 1$ for almost all $\omega$.
\end{corollary}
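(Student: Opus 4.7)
The plan is to deduce the corollary as a direct combination of Propositions~\ref{thm:one}, \ref{thm:two}, and~\ref{prop:finite_diam}, with essentially no new work beyond unpacking definitions. The content of the claim is already packaged into the preceding two propositions; Proposition~\ref{prop:finite_diam} is precisely the abstract mechanism that turns ``finite diameter of the image'' into ``strict contraction with explicit rate'', so nothing substantive remains to be proved.

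First, I would invoke Proposition~\ref{thm:one} to observe that $\scL_\omega$ maps the cone $\cC_\omega$ into the cone $\cC_{T\omega}$ of the shifted sample point. This is the prerequisite for even speaking of $\theta_{T\omega}$ applied to $\scL_\omega \phi_1$ and $\scL_\omega \phi_2$ for $\phi_1, \phi_2 \in \cC_\omega$. Next, Proposition~\ref{thm:two} provides the key quantitative input: the $\theta_{T\omega}$-diameter of the image $\scL_\omega \cC_\omega$ is bounded by the almost surely finite random variable $\bar{D}(T\omega)$ defined in Equation~\eqref{eq:Diam}. Thus, with probability one, we have a linear map $\scL_\omega\colon \cC_\omega \to \cC_{T\omega}$ (between two proper convex cones) whose image has finite diameter in the target cone's Hilbert projective metric.

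With these ingredients in hand, I would apply Proposition~\ref{prop:finite_diam} with $E_1 = E_2$ (the ambient vector space of densities), $C_1 = \cC_\omega$, $C_2 = \cC_{T\omega}$, $L = \scL_\omega$, and $D = \bar{D}(T\omega)$. This immediately yields
\[
\theta_{T\omega}(\scL_\omega \phi_1, \scL_\omega \phi_2) \leq \bigl(1 - e^{-\bar{D}(T\omega)}\bigr)\, \theta_\omega(\phi_1, \phi_2)
\]
for all $\phi_1, \phi_2 \in \cC_\omega$, which is precisely Equation~\eqref{eq:contract} with the stated contraction factor $\Lambda(\omega) = 1 - e^{-\bar{D}(T\omega)}$. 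Finally, the almost sure finiteness of $\bar{D}(T\omega)$ asserted in Proposition~\ref{thm:two} translates into $e^{-\bar{D}(T\omega)} > 0$ almost surely, hence $\Lambda(\omega) < 1$ on a set of full $\P$-measure.

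There is no genuine obstacle here: the real work has already been done in establishing cone invariance (Proposition~\ref{thm:one}) and in bounding the image diameter (Proposition~\ref{thm:two}). The only thing to be careful about is a bookkeeping point, namely to apply Proposition~\ref{prop:finite_diam} with the correct pair of cones $(\cC_\omega, \cC_{T\omega})$ rather than a single cone, and to note that this is legitimate because Proposition~\ref{prop:finite_diam} as stated allows $L$ to act between two distinct proper convex cones equipped with their respective Hilbert metrics.
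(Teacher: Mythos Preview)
Your proposal is correct and matches the paper's approach exactly: the paper states this corollary without a separate proof, noting only that it is a consequence of Propositions~\ref{thm:one} and~\ref{thm:two} via Proposition~\ref{prop:finite_diam}. Your unpacking of the argument, including the care about applying Proposition~\ref{prop:finite_diam} between the two distinct cones $\cC_\omega$ and $\cC_{T\omega}$, is precisely the intended reasoning.
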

\section{Proofs of Theorem~\ref{thm:three} and Corollary~\ref{cor:conv}}
\label{sec:Stability}
We prove the main theorem, Theorem~\ref{thm:three} by constructing a $\theta_+^{\hat{a}_{\omega}}$-Cauchy sequence, where $\theta_+^{\hat{a}_{\omega}}$ is the Hilbert metric defined by Equation (\ref{eq:theta_+_a}) in the proof of Proposition~\ref{thm:two}. We then can use Proposition 4.7 from \cite{Viana1997}, which we state below as Proposition~\ref{prop: cauchy}, to show that the normalised sequence is weakly convergent in $\mathbb{R}$. The covariant measure is constructed as the weak* limit of that Cauchy sequence.
\begin{proposition}\label{prop: cauchy}
Given $a > 2a_0$, $\mu < \nu_0$ and a $\theta_+^a$-Cauchy sequence $\phi_n$, such that $\int_Q \phi_n \idd m  =~1$ for all $n \in \mathbb{Z}^+$, and any continuous  function $\psi: Q \to \mathbb{R}$, the sequence $\{\int \phi_n \psi \idd m\}_{n \in \mathbb{Z}^+}$ is Cauchy in $\mathbb{R}$.
\end{proposition}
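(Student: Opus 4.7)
The plan is to exploit the disintegration $\idd m = H_\gamma \idd m_\gamma \,\idd\tilde{m}(\gamma)$ of the Riemannian volume along the local stable leaves of Subsection~\ref{sec:stable_leaves}, for which $\log H_\gamma$ is $(a_0, \nu_0)$-H\"older. This rewrites $\int_Q \phi \psi \idd m$ as $\int \bigl( \int_\gamma \phi \cdot (\psi H_\gamma) \idd m_\gamma \bigr) \idd\tilde{m}(\gamma)$, which is exactly the form amenable to the $\theta_+^a$-Cauchy hypothesis provided that $\psi H_\gamma$ qualifies as a test density, i.e.\ $\psi H_\gamma \in \mathcal{D}(a, \mu, \gamma)$. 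The conditions $a > 2a_0$ and $\mu < \nu_0$ are precisely what is needed to leave room for a nontrivial factor $\psi$: if $\psi > 0$ with $\log \psi$ being $(a - a_0, \mu)$-H\"older, then $\log(\psi H_\gamma)$ is $(a, \mu)$-H\"older on $\gamma$, using $\mu \leq \nu_0$ and $\mathrm{diam}(Q) \leq 1$ to absorb the exponent mismatch coming from $\log H_\gamma$.

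The second step settles the proposition when $\psi$ is itself log-H\"older. The Cauchy hypothesis produces $\alpha_{n,m} \leq \beta_{n,m}$ with $\log(\beta_{n,m}/\alpha_{n,m}) = \theta_+^a(\phi_n, \phi_m) \to 0$ such that $\alpha_{n,m} \int_\gamma \phi_m \rho \leq \int_\gamma \phi_n \rho \leq \beta_{n,m} \int_\gamma \phi_m \rho$ for every $\gamma$ and every $\rho \in \mathcal{D}(a, \mu, \gamma)$. Integrating against $\idd\tilde{m}$ with $\rho = \psi H_\gamma$ transports this sandwich to $\int \phi_n \psi \idd m$ versus $\int \phi_m \psi \idd m$. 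Taking $\psi \equiv 1$ (so $\rho = H_\gamma$) together with $\int \phi_n \idd m = \int \phi_m \idd m = 1$ forces $\alpha_{n,m} \leq 1 \leq \beta_{n,m}$, hence $\alpha_{n,m}, \beta_{n,m} \to 1$. Testing separately with the pair $K \pm \psi$ for $K \geq \|\psi\|_\infty + L_\psi/(a - a_0)$---the smallest constant making both of these strictly positive and log-H\"older with constant at most $a - a_0$---yields the a priori bound $\bigl| \int \phi_n \psi \idd m \bigr| \leq K$, and therefore $\bigl| \int \phi_n \psi \idd m - \int \phi_m \psi \idd m \bigr| \leq 2 K \max(\beta_{n,m} - 1, 1 - \alpha_{n,m}) \to 0$.

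For the remaining case of a merely continuous $\psi$, I would approximate uniformly by Lipschitz $\psi_k$ with $\|\psi - \psi_k\|_\infty \to 0$ (typically with Lipschitz constants $L_k \to \infty$). For each fixed $k$, the second step gives Cauchy-ness of $\{ \int \phi_n \psi_k \idd m \}_n$, and the triangle inequality reduces the problem to controlling $\sup_n \bigl| \int \phi_n (\psi - \psi_k) \idd m \bigr|$ uniformly in $n$ so that one may exchange the two limits. This is the main obstacle: elements of $\mathscr{A}$ may have large negative parts, so no a priori $L^1(m)$-bound on the $\phi_n$ is immediately available, and the Lipschitz-norm bound $| \int \phi_n \chi \idd m | \leq \|\chi\|_\infty + L_\chi/(a-a_0)$ from step two fails to extend by density to $C(Q)$. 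A natural route is to note that this bound makes the family $\chi \mapsto \int \phi_n \chi \idd m$ uniformly bounded on the Banach space of Lipschitz functions with the Lipschitz norm, then invoke Banach--Alaoglu in the dual of $C(Q)$ together with the Cauchy property on the dense Lipschitz subspace to identify a unique weak-$*$ limit in the signed Radon measures on $Q$. Upgrading the Lipschitz-norm control to a genuine total-variation bound on the measures $\phi_n m$ is the technical heart of this extension step.
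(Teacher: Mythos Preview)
Your first two steps---the disintegration along local stable leaves together with the sandwich argument for positive log-H\"older $\psi$, and the reduction of general H\"older $\psi$ to the positive case via $K\pm\psi$---are exactly the paper's proof (the paper writes $\psi^{\pm}=\tfrac12(|\psi|\pm\psi)+B$, which is the same trick). The paper packages the outcome as the inequality
\[
\Bigl|\int_Q \phi_k\psi\idd m-\int_Q\phi_l\psi\idd m\Bigr|\le\sup|\psi|\bigl(e^{\theta_+^{a}(\phi_k,\phi_l)}-1\bigr),
\]
whereas you obtain a bound with constant $\|\psi\|_\infty+L_\psi/(a-a_0)$; the difference is immaterial for Cauchy-ness.

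Where you diverge is in the passage to merely continuous $\psi$. You correctly flag that elements of $\mathscr{A}$ need not be nonnegative, so an $L^1$ bound on $\phi_n$ is not free, and you then propose a Banach--Alaoglu / total-variation upgrade. This is unnecessary, and in fact the paper does not attempt anything of the sort. Both the constant $\sup|\psi|$ in the displayed inequality above and the approximation step (which the paper defers to the proof of Corollary~\ref{cor:conv}) silently use that in every application the sequence consists of \emph{nonnegative} normalised densities: each $\phi_n$ is of the form $\nscL_{\omega}^{n}\phi$ or $\bar\zeta_n$, i.e.\ obtained by iterating $g\cdot\scP$ on a positive function, hence $\phi_n\ge 0$ with $\int\phi_n\idd m=1$. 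Under that extra hypothesis the missing control is trivial, $|\int\phi_n(\psi-\psi_k)\idd m|\le\|\psi-\psi_k\|_\infty$, and the approximation goes through by Stone--Weierstrass. So your concern is legitimate at the level of the proposition as literally stated, but the resolution is not a functional-analytic argument; it is simply to note (or add as a hypothesis) that $\phi_n\ge0$, which holds throughout the paper. Your proposed TV-bound route is not needed and, for truly sign-changing $\phi_n\in\mathscr{A}$, may well fail.
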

We will reproduce the proof here as it contains further ideas we will use later.
\begin{proof}
Suppose first that $\psi > 0$ and $\log \psi$ is $(a/2, \mu)$-H\"{o}lder continuous. 
We use the absolute continuity of the local stable foliation (see Section~\ref{sec:stable_leaves}), that is, there exists a function $H: Q \to (0, \infty)$ such that 
$\log H$ is $(a_0, \nu_0)$-H\"{o}lder continuous and
\begin{equation*}
\int_Q \phi_n \psi \idd m = \int \Big( \int_{\gamma}\phi_n \psi H_{\gamma}\Big)\, d\tilde{m}(\gamma),
\end{equation*}
where $\tilde{m}$ is the quotient measure induced by the Riemannian measure $m$ in the space of stable leaves and $H_{\gamma} = H|\gamma \idd m_{\gamma}$. We note that $\log H_{\gamma}$ and $\log \psi H_{\gamma}$ are both in $ \mathcal{D}(a, \mu) $ since $a > 2a_0$ and $\mu < \nu_0$. Therefore, for any $k, l \geq 1$, and any $\gamma$, it holds that
\[\frac{\int_{\gamma} \phi_k \, H_{\gamma}}{\int_{\gamma} \phi_l \, H_{\gamma}}  \geq \alpha_+(\phi_k, \phi_l)\] and 
\[\frac{\int_{\gamma} \phi_k \psi\, H_{\gamma}}{\int_{\gamma} \phi_l \psi \, H_{\gamma}}  \leq \beta_+(\phi_k, \phi_l),\]
where $\beta_+$ and $\alpha_+$ correspond to the $\theta_+^a$ Hilbert metric defined in Equation~\eqref{eq:theta_+_a}.
Since $\int \phi_k \idd m = 1  = \int \phi_l \idd m$, there exists a local stable leaf $\delta$ such that $\int_{\delta} \phi_k H_{\delta} \leq \int_{\delta} \phi_l H_{\delta}$ and so we can deduce that
\begin{equation*}
\frac{\int \phi_k \psi\, H_{\delta}}{\int \phi_l \psi \, H_{\delta}}  \leq \frac{\beta_+(\phi_k, \phi_l)}{\alpha_+(\phi_k, \phi_l)} \cdot \frac{\int \phi_k \, H_{\delta}}{\int \phi_l \, H_{\delta}} \leq e^{\theta_+^{a}(\phi_k, \phi_l)},
\end{equation*}
so that
\begin{align*}\label{eq:key_eq_Cauchy_prop}
\frac{\int_Q \phi_k \psi \idd m}{\int_Q \phi_l \psi \idd m} \leq e^{\theta_+^{a}(\phi_k,\phi_l)}.
\end{align*}
Therefore, we have that
\begin{align}
\Big|\int_Q \phi_k \psi \idd m - \int_Q \phi_l \psi \idd m\Big| &=\Big|\int_Q \phi_l \psi \idd m\Big| \cdot \Big|\frac{\int_Q \phi_k \psi \idd m}{\int_Q \phi_l \psi \idd m}-1\Big|\\
& \leq \sup|\psi|\Big(e^{\theta_+^{a}(\phi_k,\phi_l)}-1\Big),\label{eq:cauchy_ineq}
\end{align}
and hence $(\int \phi_n \psi \idd m)_n$ is Cauchy in $\mathbb{R}$ as required. 

Next, suppose that $\psi$ is a general $\mu$-H\"{o}lder continuous function and let \[\psi = \psi^+ - \psi^-, \ \text{where} \ \psi^{\pm}= \frac{1}{2}(|\psi| \pm \psi) + B,\] for some constant $B>0$ to be chosen.

It is easy to verify that $ \psi^{\pm} $ are both positive, $\mu$-H\"{o}lder continuous function. Furthermore, the following inequality holds whenever $B>1$,
\[|\log \psi^{\pm}(x) - \log \psi^{\pm}(y)| \leq \frac{1}{B}|\psi^{\pm}(x) - \psi^{\pm}(y)| \leq \frac{K}{B}|x-y|^{\mu}\] for some constant H\"{o}lder constant $K>0$. We can therefore choose $B$ such that $\log \psi^{\pm}$ is $(a/2, \mu)$-H\"{o}lder continuous. Then, Equation (\ref{eq:cauchy_ineq}) holds for $\psi^{\pm}$ and hence also for $\psi$.
%% \begin{comment}
%% the slope at $x=B$ is $1/B$ for all $B \geq 1$
%% \end{comment}
%
The case of a general continuous $\psi$ is shown by approximating it arbitrarily closely with $\mu$-H\"{o}lder continuous functions (as we will see in the proof of Corollary~\ref{cor:conv}).
\end{proof}
Our next aim is to show that the cones $\cC_{\omega}$ are not only invariant but in a sense absorbing.
\begin{proposition}\label{prop:core}
  Fix some $\mu, \hat{\mu}, \nu$ satisfying Lemma~\ref{lemma:condition_C}.
  Then for any numbers $x, \hat{x}, z$ so that $z > 0, \hat{x}\geq \hat{a}_{\omega},x\geq a_{\omega} $, there exists an $N(\omega) \in \mathbb{Z}^+$ such that
  \[
  \scL_{\omega}^n \cC(z,\hat{x},x, \mu, \nu, \hat{\mu})
  \subseteq \mathcal{C}_{T^n\omega}
  \]
  whenever $n \geq N(\omega)$.
\end{proposition}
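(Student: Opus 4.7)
The plan is to iterate Lemma~\ref{lemma:condition_C} (together with Lemma~\ref{lemma:set_A} for the $\scA$ component of $\cC$) while \emph{choosing} the output cone parameters so that the $a$- and $\hat{a}$-components are forced onto the stationary random sequence $(a_{T^n\omega},\hat{a}_{T^n\omega})$ from Proposition~\ref{thm:one} already after one application of $\scL_\omega$. This is permissible because Lemma~\ref{lemma:condition_C} allows us to pick \emph{any} output parameters satisfying its hypotheses; relaxing them from the tightest admissible values only enlarges the output cone. The problem then reduces to tracking a single scalar iteration $z^{(n+1)}=z^{(n)}\lambda_u^\nu+K_0(T^n\omega)$ of the $c$-parameter and comparing it with the stationary $c_{T^n\omega}$.

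For the first step I will verify that Lemma~\ref{lemma:condition_C} applies with input $(z,\hat{x},x)$ and output $(z\lambda_u^\nu+K_0(\omega),\hat{a}_{T\omega},a_{T\omega})$. Condition~(a), via Lemma~\ref{lemma:test_cone}, reduces to $x\geq(a_{T\omega}+\bar{G}(\omega))\lambda_s^\mu=\delta a_\omega$ using the stationary identity derived in Proposition~\ref{thm:one}; this follows from $x\geq a_\omega\geq\delta a_\omega$. The analogous bound needed for Lemma~\ref{lemma:set_A} follows from $\hat{x}\geq\hat{a}_\omega\geq\delta\hat{a}_\omega$. For (b) and (c), Lemmas~\ref{lemma:test_cone} and~\ref{lemma:Note2} reduce the requirements to $(1-\epsilon)\hat{x}\geq\delta a_\omega a_0^\mu+a_0$ and $(1-\epsilon)\hat{x}\geq(a_{T\omega}a_0^\mu+a_0+\bar{G}(\omega))\lambda_s^\mu$, both of which follow from Equations~(\ref{eq:a_2})-(\ref{eq:a_3}) and $\hat{x}\geq\hat{a}_\omega$ as soon as $\epsilon\leq 1-\delta$, the same margin already built into Proposition~\ref{thm:one}. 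For $n\geq 1$ the input has the stationary $a$- and $\hat{a}$-parameters and the verification is identical to that of Proposition~\ref{thm:one} at the shifted base point; the deviation of the input $c$-parameter from $c_{T^n\omega}$ is irrelevant since none of conditions~(a)-(d) involves~$c$. Inductively, $\scL_\omega^n\phi\in\cC(z^{(n)},\hat{a}_{T^n\omega},a_{T^n\omega},\mu,\nu,\hat{\mu})$ with
\[z^{(n)}=z\lambda_u^{n\nu}+\sum_{k=0}^{n-1}K_0(T^k\omega)\lambda_u^{(n-1-k)\nu}.\]

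It remains to show $z^{(n)}\leq c_{T^n\omega}$ for $n$ large. Splitting the defining series $c_{T^n\omega}=\delta^{-1}\sum_{k=0}^\infty K_0(T^{n-k-1}\omega)\lambda_c^k$ at $k=n-1$, the tail reshuffles to $\lambda_c^n c_\omega$, and a termwise comparison using $\delta^{-1}\lambda_c^k\geq\lambda_u^{k\nu}$ (which holds because $\lambda_c=\lambda_u^\nu/\delta$ and $\delta<1$) gives
\[c_{T^n\omega}-z^{(n)}\geq\lambda_c^n c_\omega-z\lambda_u^{n\nu}=\lambda_u^{n\nu}\bigl(\delta^{-n}c_\omega-z\bigr).\]
Since $c_\omega>0$ almost surely and $\delta^{-n}\to\infty$, the right-hand side is nonnegative for every $n\geq N(\omega):=\max\{1,\lceil\log(z/c_\omega)/\log(1/\delta)\rceil\}$, an almost surely finite random integer. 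For such $n$ the containment $\cC(z^{(n)},\hat{a}_{T^n\omega},a_{T^n\omega},\mu,\nu,\hat{\mu})\subseteq\cC_{T^n\omega}$ holds coordinate-by-coordinate, completing the argument. The main technical hurdle will be the first-step verification of~(b) and~(c) with non-stationary inputs; the resolution is that replacing the tightest admissible output parameters by the smaller stationary values $(a_{T\omega},\hat{a}_{T\omega})$ only enlarges the output cone, so the $\epsilon\leq 1-\delta$ slack built into Proposition~\ref{thm:one} absorbs the mismatch.
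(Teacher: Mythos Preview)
Your proof is correct and follows essentially the same route as the paper. The only difference is bookkeeping: the paper lets all three parameters $(z_n,\hat x_n,x_n)$ evolve under the induced map $\Theta$ of Equation~\eqref{eq:induced} and uses that $\Theta_2,\Theta_3$ are order-preserving to conclude $x_n\ge a_{T^n\omega}$ and $\hat x_n\ge\hat a_{T^n\omega}$ for all $n$, whereas you force the $a$- and $\hat a$-parameters onto their stationary values after one step by exploiting the freedom in Lemma~\ref{lemma:condition_C} to relax the output parameters. Both routes then reduce to the identical scalar comparison $z\lambda_u^{n\nu}$ versus $c_\omega\lambda_c^n=\lambda_u^{n\nu}\delta^{-n}c_\omega$, and your threshold $N(\omega)=\lceil\log(z/c_\omega)/\log(1/\delta)\rceil$ is exactly the paper's $\ln(z_0/c_\omega)/\ln(\lambda_c/\lambda_u^\nu)$. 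Your version has the minor advantage that, since the $a$- and $\hat a$-inputs are already stationary from step two onward, the $K_0(T^k\omega)$ appearing in your $z^{(n)}$ recursion is unambiguously the one defined in Proposition~\ref{thm:one}; the paper's formulation leaves this dependence implicit.
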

\begin{proof}
  Let $ \Theta: \Omega \times \mathbb{R}_{+}^3 \to \mathbb{R}_{+}^3  $ be the map given by 
\beq{eq:induced}
\Theta(\omega, z, \hat{x}, x)
:= \left(%
  \begin{array}{c}
    \Theta_1(\omega, z)\\
    \Theta_2(\omega, \hat{x})\\
    \Theta_3(\omega, x)
  \end{array}
  \right)
:= \left(%
  \begin{array}{c}
    z\lambda_u^{\nu} + K_0(\omega)\\
    \hat{x}/\lambda_{s}^{\hat{\mu}}-\tilde{G}(\omega)\\
    x/\lambda_s^{\mu}-\bar{G}(\omega)
  \end{array}
  \right)
\eeq
  Put $z_0 := z, \hat{x}_0 := \hat{x}, x_0 := x$ and define inductively $z_n := \Theta_1(T^{n-1}\omega, z_{n-1})$ and similarly with $ \hat{x}_n, x_n$. 
  Since $\hat{x}_0\geq \hat{a}_{\omega}$, we have
$\hat{x}_n \geq \hat{a}_{T^n\omega}$ for all $n \geq 0$ since $\Theta_2$ keeps the ordering.
  This implies $\scA(\hat{x}_n, \mu) \subseteq \scA(\hat{a}_{T^n\omega}, \mu)$ for all $n \geq 0$.
  Next, we consider the set $\scC(c_\omega, a_\omega, \mu)$. We want to show that there exists $N(\omega) \in \mathbb{Z}^+$ such that $\scC(z_n, x_n, \mu) \subseteq \scC(c_{T^n\omega}, a_{T^n\omega}, \mu)$ for all $n \geq N(\omega)$.
  This requires that $x_n > a_{T^n\omega}$, which is the case for all $n$, and that $z_n<c_{T^n\omega}$, so that we need the difference $\delta_n =  z_n-c_{T^n\omega}< 0$ for some $n$.
  Assume $z_0 > c_{\omega}$ (otherwise we are done). We have that 
\begin{align}
\delta_n  &= z_0\lambda_u^{\nu n} - c_{\omega}\lambda_c^n + \sum_{k=1}^{n}K_0(T^{n-k}\omega)\lambda_u^{\nu(k-1)}-\sum_{k=1}^{n}K_0(T^{n-k}\omega)\lambda_c^{(k-1)}\\
&\leq z_0\lambda_u^{\nu n} - c_{\omega}\lambda_c^n < 0,
\end{align}
whenever $n > \frac{\ln z_0/c_\omega}{\ln \lambda_c/\lambda_u^\nu}$, since $\frac{\lambda_c}{\lambda_u^{\nu}}=\delta^{-1}>1$ (See Proposition~\ref{thm:one}). The result follows if we let $N(\omega) \geq \max \big( \frac{\ln z_0/c_\omega}{\ln \lambda_c/\lambda_u^\nu},0\big).$
\end{proof}
As a simple corollary we get that in fact any positive, $\log$-H\"{o}lder continuous function $\phi$ is eventually inside one of our cones in the future, that is $\phi \in \cC_{T^n\omega}$ for some $n \geq N(\omega)$, where $N(\omega)$ depends on $\cC_{\omega}$ and the H\"{o}lder constant of $\phi$.
\begin{corollary}\label{prop:holder_cone}
  Suppose that $\phi > 0$ and $\log \phi$ is $(k,\nu)$-H\"{o}lder continuous for some $k > 0$. Then there exists an $N(\omega) \in \mathbb{Z}^+$ such that $ \scL_{\omega}^n \phi \in \mathcal{C}_{T^n\omega}$ for all $n \geq N(\omega)$.
\end{corollary}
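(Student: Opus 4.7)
The plan is to reduce the corollary to Proposition~\ref{prop:core} by exhibiting $\phi$ itself as an element of a cone $\cC(z, \hat{x}, x, \mu, \nu, \hat{\mu})$ with parameters satisfying the hypotheses of that proposition, namely $z>0$, $\hat{x}\geq\hat{a}_{\omega}$, $x\geq a_{\omega}$. Once this is done, Proposition~\ref{prop:core} immediately gives an $N(\omega)$ such that $\scL_{\omega}^n\phi \in \cC_{T^n\omega}$ for all $n\geq N(\omega)$. Since $\hat{a}_{\omega}$ and $a_{\omega}$ are almost surely finite by Proposition~\ref{thm:one}, the choice $\hat{x} = \hat{a}_{\omega}$, $x = a_{\omega}$ is legitimate for a.e.\ $\omega$, and the parameters $\mu,\nu,\hat{\mu}$ are simply fixed to match those of Lemma~\ref{lemma:condition_C} with $\mu = \hat{\mu}+\nu$.

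Recall $\cC(z,\hat{x},x,\mu,\nu,\hat{\mu}) = \scC(z,x,\mu,\nu)\cap\scA(\hat{x},\hat{\mu})$, so there are two conditions to check. Membership $\phi \in \scA(\hat{x},\hat{\mu})$ is straightforward: elements of $\mathcal{D}(\hat{x},\hat{\mu},\gamma)$ are by definition strictly positive, and $\phi > 0$ by assumption, so $\int_{\gamma}\phi\rho > 0$ for every local stable leaf $\gamma$ and every admissible $\rho$. This holds regardless of the values of $\hat{x}$ and $\hat{\mu}$.

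The substantive step is to verify $\phi\in\scC(z,x,\mu,\nu)$ for some finite $z$. For nearby stable leaves $\gamma,\delta$ with $\pi:\delta\to\gamma$ the diffeomorphism from Equation~\eqref{eq:pi}, I will use the change-of-variables formula to write
\[
\int_{\delta}\phi\,\pi^*\rho \;=\; \int_{\delta}\phi(y)\,\rho(\pi(y))\,\dtpi(y)\idd m_{\delta}(y) \;=\; \int_{\gamma}(\phi\circ\pi^{-1})\,\rho \idd m_{\gamma}.
\]
Because $\log\phi$ is $(k,\nu)$-Hölder and $d(x,\pi^{-1}(x))\leq d(\gamma,\delta)$ by definition of the leaf distance, I obtain the pointwise bound $e^{-kd(\gamma,\delta)^{\nu}} \leq \phi(x)/\phi(\pi^{-1}(x)) \leq e^{kd(\gamma,\delta)^{\nu}}$ for all $x\in\gamma$. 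Since the bound is independent of $\rho$, integrating against the common weight $\rho\,\dd m_{\gamma}$ yields
\[
e^{-kd(\gamma,\delta)^{\nu}} \;\leq\; \frac{\int_{\gamma}\phi\rho}{\int_{\delta}\phi\,\pi^*\rho} \;\leq\; e^{kd(\gamma,\delta)^{\nu}},
\]
which is exactly the defining inequality of $\scC(k,x,\mu,\nu)$, valid for any $x>0$ and $0<\mu\leq 1$.

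Combining the two inclusions, $\phi\in\cC(k,\hat{a}_{\omega},a_{\omega},\mu,\nu,\hat{\mu})$, and applying Proposition~\ref{prop:core} with $z=k$, $\hat{x}=\hat{a}_{\omega}$, $x=a_{\omega}$ gives the desired $N(\omega)$. I do not foresee a real obstacle here: the only delicate point is keeping the roles of $\nu$ (Hölder exponent of $\phi$) consistent with the $\nu$ that appears in the cone $\scC$, but since the corollary's hypothesis uses the same symbol, this is automatic; if one were handed a $\phi$ with a larger Hölder exponent, one would first downgrade it to exponent $\nu$ using $d^{\nu'}\leq d^{\nu}$ on the diameter-one set $Q$.
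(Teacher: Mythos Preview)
Your proof is correct and follows essentially the same route as the paper: show $\phi\in\scA$ trivially from positivity, show $\phi\in\scC(k,\cdot,\cdot,\nu)$ via the pointwise H\"older bound on $\phi/\phi\circ\pi^{-1}$, and then invoke Proposition~\ref{prop:core}. The only cosmetic difference is that the paper compresses your change-of-variables step into a one-line appeal to the mean value theorem, writing $\frac{\int_\gamma\phi\rho}{\int_\delta\phi\,\pi^*\rho}=\frac{\phi(x)}{\phi(y)}$ for suitable $x,y$ and bounding $d(x,y)\le d(\gamma,\delta)$; your explicit argument is cleaner.
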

\begin{proof} 
 Clearly $\phi \in \mathscr{A}(\hat{a}_{\omega}, \hat{\mu})$ since it is a positive function. By the mean value theorem, $\frac{\int_{\gamma}\phi \rho}{\int_{\delta}\phi \pi^*\rho} = \frac{\phi(x)}{\phi(y)} \leq e^{k d(x, y)^{\nu}} \leq e^{kd(\gamma, \delta)^{\nu}}$. Hence $\phi \in \mathcal{C}(k, \hat{a}_{\omega}, a_{\omega})$ and the result follows by Proposition~\ref{prop:core}. \end{proof}
Next we will construct a random sequence of densities $\{\zeta_n(\omega) \in \mathcal{C}_{\omega}, n \in \N\}$.
In Proposition~\ref{thm:cauchy} we demonstrate that $\{\zeta_n \}$ is almost surely $\theta_+$-Cauchy. 
We define
\begin{equation}
\scL^n_{\omega} : =  \scL_{T^{n-1}\omega} \circ ...\circ \scL_{T\omega} \circ \scL_{\omega},
\end{equation}
and
\begin{equation}\label{eq:z_n}
\zeta_n(\omega) := 
\begin{cases}
\scL^n_{T^{-n}\omega}\cf, &n\geq1\\
\cf & n=0
\end{cases}
\end{equation}
for almost all $\omega$, where we denote by $\cf$ the function given by $\cf(x) = 1$ for all $x \in Q.$
The next lemma explores some elementary properties of the densities $\{\zeta_n, n \in \N\}$. 
\begin{lemma}\label{lemma:zeta} For all $\omega \in \Omega$ it holds that
  \begin{enumerate}
  \item $\cf \in \mathcal{C}_{\omega}$; \label{lemma:function_1}
  \item $\zeta_n(\omega) \in \mathcal{C}_{\omega}$ for all $n \in \mathbb{N}$; \label{lemma:z}
  \item $ \zeta_{n+1}(\omega) = \scL_{T^{-1}\omega} \zeta_n(T^{-1}\omega).$ \label{lemma:z_L}
  \end{enumerate}
\end{lemma}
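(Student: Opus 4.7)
The three items are largely bookkeeping using definitions already in place, so the plan is to dispatch them in order, with the cone invariance from Proposition~\ref{thm:one} doing all the heavy lifting for (2), and the definition of $\scL^n_\omega$ from Equation~\eqref{equ:3.10} doing all the work for (3).

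For (1), I need to verify that $\cf \equiv 1$ lies in both $\scA(\hat{a}_\omega, \hat{\mu})$ and $\scC(c_\omega, a_\omega, \mu, \nu)$. Membership in $\scA$ is immediate: for any local stable leaf $\gamma$ and any $\rho \in \cD(\hat{a}_\omega, \hat{\mu}, \gamma)$ the density $\rho$ is strictly positive, so $\int_\gamma \cf\, \rho = \int_\gamma \rho > 0$. Membership in $\scC$ reduces to showing the ratio $\int_\gamma \rho / \int_\delta \pi^*\rho$ is between $e^{\pm c_\omega d(\gamma,\delta)^\nu}$. But by the definition $\pi^*\rho(y) = \rho(\pi(y))\,\dtpi(y)$ together with the change-of-variables formula along the diffeomorphism $\pi:\delta \to \gamma$, we have $\int_\delta \pi^*\rho\, dm_\delta = \int_\gamma \rho\, dm_\gamma$, so the ratio is identically $1$ and hence lies trivially in the required interval for any $c_\omega \geq 0$.

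For (2), apply Proposition~\ref{thm:one} with the base point shifted to $T^{-n}\omega$: the inclusion $\scL_{T^k\omega'}\cC_{T^k\omega'} \subseteq \cC_{T^{k+1}\omega'}$ with $\omega' = T^{-n}\omega$ and $k = 0, \ldots, n-1$ gives, after $n$ iterations, $\scL^n_{T^{-n}\omega}\cC_{T^{-n}\omega} \subseteq \cC_\omega$. By part (1), $\cf \in \cC_{T^{-n}\omega}$, hence $\zeta_n(\omega) = \scL^n_{T^{-n}\omega}\cf \in \cC_\omega$.

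For (3), I just unfold both sides using the definition $\scL^n_{\omega'} = \scL_{T^{n-1}\omega'}\circ\cdots\circ\scL_{\omega'}$. Setting $\omega' = T^{-(n+1)}\omega$ yields
\[
\zeta_{n+1}(\omega) = \scL^{n+1}_{T^{-(n+1)}\omega}\cf = \scL_{T^{-1}\omega}\circ\scL_{T^{-2}\omega}\circ\cdots\circ\scL_{T^{-(n+1)}\omega}\,\cf,
\]
while setting $\omega' = T^{-n}(T^{-1}\omega) = T^{-(n+1)}\omega$ in the definition of $\zeta_n$ gives
\[
\zeta_n(T^{-1}\omega) = \scL^n_{T^{-(n+1)}\omega}\cf = \scL_{T^{-2}\omega}\circ\cdots\circ\scL_{T^{-(n+1)}\omega}\,\cf.
\]
Pre-composing this with $\scL_{T^{-1}\omega}$ reproduces the first display, yielding the identity. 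There is no substantive obstacle; the only mild care needed is to get the index shifts right in~(3), which is why I would lay them out explicitly as above.
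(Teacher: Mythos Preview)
Your proposal is correct and follows essentially the same approach as the paper: part~(1) via the change-of-variables identity $\int_\delta \pi^*\rho = \int_\gamma \rho$ making the $\scC$-ratio equal to $1$, part~(2) by iterating Proposition~\ref{thm:one} starting from $\cf \in \cC_{T^{-n}\omega}$, and part~(3) by unfolding the composition. The only cosmetic difference is that the paper verifies the equivalent identity $\scL_\omega \zeta_n(\omega) = \zeta_{n+1}(T\omega)$ and leaves the substitution $\omega \mapsto T^{-1}\omega$ implicit, whereas you prove the stated form directly.
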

\begin{proof} 
  To prove item~\ref{lemma:function_1}, recall that
  \[
  \mathcal{C}_{\omega}:=\mathscr{C}(c_{\omega},a_{\omega}, \mu, \nu)\cap\mathscr{A}(\hat{a}_{\omega}, \hat{\mu}),
  \]
  with the sets $\mathscr{A}, \mathscr{C}$ defined by (\ref{eq:set_A}) and (\ref{eq:set_C}) respectively. 
Clearly $\cf \in \mathscr{A}(\hat{a}_{\omega}, \hat{\mu})$ since $\int_{\gamma}\cf\cdot\rho > 0$, for all $\rho \in \mathcal{D}(\hat{a}_{\omega}, \hat{\mu})$, since $\rho(x) > 0$ for all $x \in \gamma$.
For $ \mathscr{C}(c_{\omega},a_{\omega}, \mu, \nu) $ we note that
\begin{align*}
\frac{\int_{\gamma} \cf\cdot\rho}{\int_{\delta} \cf\cdot\pi^*\rho} = \frac{\int_{\gamma} \cf\cdot\rho}{\int_{\gamma} \cf\cdot\rho}=1,
\end{align*}
since $\pi^*\rho(y) := \rho(\pi(y)) \dtpi(y)$ and $\pi(\delta) = \gamma. $ Since $c_{\omega} > 0,$ clearly we have that $e^{-c_{\omega}d(\gamma, \delta)^{\nu}} \leq 1 \leq e^{c_{\omega}d(\gamma, \delta)^{\nu}}$.
Next we prove item~\ref{lemma:z}.
By definition $\zeta_n(\omega) = \scL_{T^{-1}\omega} \circ ...\circ \scL_{T^{-n}\omega}\cf$. By the above lemma, $\cf \in \mathcal{C}_{\omega}$ for all $\omega \in \Omega$. For each $n \geq 0$, consider $\cf$ to be in $\mathcal{C}_{T^{-n}\omega}$. Then we can apply Proposition~\ref{thm:one} to see that $\scL_{T^{-n}\omega}\cf \in \mathcal{C}_{T^{-n+1}\omega}$ and consequently $\zeta_n(\omega) \in \mathcal{C}_{\omega}$. 

Finally, for item~\ref{lemma:z_L} we have that
\begin{align*}
\scL_{\omega} \zeta_n(\omega)&=  \scL_{\omega} \scL_{T^{-n}\omega}^n \cf\\
& = \scL_{\omega} \circ \scL_{T^{-1}\omega} \circ ...\circ \scL_{T^{-n}\omega} \cf\\
& = \scL_{T^{-n}\omega}^{n+1}\cf\\
& = \zeta_{n+1}(T\omega).
\end{align*} 
\end{proof}
\begin{proposition}\label{thm:cauchy}
  The sequence $\{\zeta_n(\omega)\}_{n \in \mathbb{N}} \in \mathcal{C}_{\omega}$ as defined in Equation~\eqref{eq:z_n} is $\theta_+^{\hat{a}_{\omega}}$-Cauchy almost surely.
\end{proposition}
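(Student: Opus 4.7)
My plan is to establish the Cauchy property by bounding $\theta_+^{\hat{a}_\omega}(\zeta_n(\omega), \zeta_m(\omega))$ for $n \leq m$ by a quantity that vanishes as $n \to \infty$ uniformly in $m$. The key idea is to represent both $\zeta_n(\omega)$ and $\zeta_m(\omega)$ as images under the common operator $\scL^n_{T^{-n}\omega}$ of elements of $\mathcal{C}_{T^{-n}\omega}$, and then to exploit the strict contraction of Corollary~\ref{cor:contraction} together with the finite $\theta_{T^{-n+1}\omega}$-diameter of the image cone guaranteed by Proposition~\ref{thm:two}.

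I would first pass from $\theta_+^{\hat{a}_\omega}$ to $\theta_\omega$. Since $\mathcal{C}_\omega \subseteq \mathscr{A}(\hat{a}_\omega, \hat{\mu})$ and, by inspection of Equation~\eqref{eq:theta_+_a}, $\theta_+^{\hat{a}_\omega}$ is precisely the Hilbert projective metric associated with the larger cone $\mathscr{A}(\hat{a}_\omega, \hat{\mu})$, standard cone containment yields $\theta_+^{\hat{a}_\omega}(\phi_1, \phi_2) \leq \theta_\omega(\phi_1, \phi_2)$ for all $\phi_1, \phi_2 \in \mathcal{C}_\omega$. By Lemma~\ref{lemma:zeta}, both $\cf$ and $\zeta_{m-n}(T^{-n}\omega)$ lie in $\mathcal{C}_{T^{-n}\omega}$, and iterating its third item produces the identities $\zeta_n(\omega) = \scL^n_{T^{-n}\omega}\cf$ and $\zeta_m(\omega) = \scL^n_{T^{-n}\omega}\zeta_{m-n}(T^{-n}\omega)$. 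Applying $\scL_{T^{-n}\omega}$ once places both images in $\scL_{T^{-n}\omega}\mathcal{C}_{T^{-n}\omega}$, whose $\theta_{T^{-n+1}\omega}$-diameter is at most $\bar{D}(T^{-n+1}\omega)$ by Proposition~\ref{thm:two}. Iterating the contraction of Corollary~\ref{cor:contraction} over the remaining $n-1$ steps then gives
\begin{equation*}
\theta_+^{\hat{a}_\omega}(\zeta_n(\omega), \zeta_m(\omega)) \leq \theta_\omega(\zeta_n(\omega), \zeta_m(\omega)) \leq \bar{D}(T^{-n+1}\omega)\prod_{k=1}^{n-1}\Lambda(T^{-k}\omega),
\end{equation*}
a bound that is independent of $m \geq n$.

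It then remains to show that the right-hand side tends to zero almost surely as $n \to \infty$. Since $\bar{D}(\omega)$ is a.s.\ finite, $\Lambda(\omega) < 1$ on a set of positive $\P$-measure, so $\log\Lambda \leq 0$ with strict inequality on that set; by ergodicity of $T$ this forces $\mathbb{E}[\log\Lambda] \in [-\infty, 0)$. Birkhoff's theorem is applicable (the positive part of $\log\Lambda$ vanishes) and yields $\tfrac{1}{n-1}\sum_{k=1}^{n-1}\log\Lambda(T^{-k}\omega) \to \mathbb{E}[\log\Lambda] < 0$ almost surely, so the product decays at least exponentially in $n$. The main technical obstacle is ensuring that the prefactor $\bar{D}(T^{-n+1}\omega)$ grows only subexponentially: via the closed-form expression for $\bar{D}$ derived in Proposition~\ref{thm:two} from $\Gamma_0$, together with the absolutely convergent geometric representations of $a_\omega$, $\hat{a}_\omega$, and $c_\omega$ exhibited in Proposition~\ref{thm:one}, each of these random variables inherits temperedness from the tempered random variable $G$ of Assumption~\ref{ass:Lipschitz}, giving $\tfrac{1}{n}\log_+\bar{D}(T^{-n+1}\omega) \to 0$ almost surely. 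Exponential decay of the product then dominates the subexponential growth of the prefactor, establishing the Cauchy property.
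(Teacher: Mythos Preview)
Your argument is correct but organises the contraction differently from the paper's proof. The paper proceeds via a telescoping sum: it uses the triangle inequality to write $\theta_\omega(\zeta_m,\zeta_n)\le\sum_{k=m}^{n-1}\theta_\omega(\zeta_k,\zeta_{k+1})$, then bounds each consecutive term by $P_k(\omega)\,l(T^{-k}\omega)$ where $P_k$ is the product of contraction factors and $l(\omega):=\theta_\omega(\cf,\zeta_1(\omega))$; the temperedness of $l$ is isolated as Lemma~\ref{lemma:theta}. You instead write $\zeta_m(\omega)=\scL^n_{T^{-n}\omega}\zeta_{m-n}(T^{-n}\omega)$ and invoke the finite diameter $\bar D(T^{-n+1}\omega)$ of the image cone after one step, so that a single application of Proposition~\ref{thm:two} followed by $n-1$ contractions yields a bound uniform in $m\ge n$ in one stroke. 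Both routes rely on the same temperedness content (your inline argument for $\bar D$ is exactly what the paper proves in Lemma~\ref{lemma:theta}), and both use Birkhoff's theorem for the product of $\Lambda$'s. Your version is marginally cleaner in that it avoids the geometric summation and delivers the $m$-uniform bound directly; the paper's version has the minor advantage of making the exponential rate $-\beta+2\epsilon$ explicit term by term, which it reuses verbatim in later proofs (Theorem~\ref{thm:three}, Proposition~\ref{thm:reg_cond_prob}).
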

In the proof, we will need the following auxiliary result.
\begin{lemma}\label{lemma:theta}
$l(\omega):=\theta_{\omega}(\cf, z_1(\omega)) \leq A(\omega)$, where $ A$ is a tempered random variable. 
\end{lemma}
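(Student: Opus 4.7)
The plan is to reduce the question to an estimate on the simpler projective metric $\theta_+^{\hat{a}_\omega}$ defined in~\eqref{eq:theta_+_a} and used inside the proof of Proposition~\ref{thm:two}, and then to bound that quantity directly by exploiting the log-Lipschitz property of the likelihood.

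First I would verify that both $\cf$ and $\zeta_1(\omega) = \scL_{T^{-1}\omega}\cf$ lie not merely in $\cC_\omega$ but in the \emph{refined} cone $\cC(\delta c_\omega, \hat{a}_\omega, a_\omega, \mu, \nu, \hat{\mu})$, where $\delta$ is the contraction factor from Proposition~\ref{thm:one}. For $\cf$ this is immediate, because the defining ratios of $\mathscr{C}$ are identically equal to one (via the change-of-variables identity $\int_\delta \pi^*\rho \idd m_\delta = \int_\gamma \rho \idd m_\gamma$) and $\cf \in \mathscr{A}$ trivially. For $\zeta_1(\omega)$, I would apply Proposition~\ref{thm:one} with $\omega$ replaced by $T^{-1}\omega$: since $\cf$ is again trivially in $\cC(c_{T^{-1}\omega}, \delta\hat{a}_{T^{-1}\omega}, \delta a_{T^{-1}\omega}, \mu, \nu, \hat{\mu})$, the proposition yields $\scL_{T^{-1}\omega}\cf \in \cC(\delta c_\omega, \hat{a}_\omega, a_\omega, \mu, \nu, \hat{\mu})$.

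Next, the chain of inequalities inside the proof of Proposition~\ref{thm:two} gives, on this refined cone,
\[
\theta_\omega(\phi_1, \phi_2) \;\leq\; \theta_+^{\hat{a}_\omega}(\phi_1, \phi_2) + \log(\tau^c_2/\tau^c_1),
\]
and the constants $\tau^c_1, \tau^c_2$ depend only on $\delta$, hence are deterministic. It then remains to bound $\theta_+^{\hat{a}_\omega}(\cf, \zeta_1(\omega))$. Using $\scL_{T^{-1}\omega}\cf(y) = g(Y_{-1}(\omega), y) \cdot \scP\cf(y)$ and $\int_\gamma \cf \cdot \rho = \int_\gamma \rho$, each ratio $\int_\gamma \scL_{T^{-1}\omega}\cf \cdot \rho / \int_\gamma \rho$ is a $\rho$-weighted mean of $g(Y_{-1}(\omega), \cdot) \cdot \scP\cf$ along the leaf, and so is sandwiched between its pointwise infimum and supremum. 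The function $\scP\cf(y) = 1/|\det \DD f(f^{-1}(y))|$ is bounded above and below by deterministic positive constants on $f(Q)$ (which contains all local stable leaves), while the log-Lipschitz assumption combined with $\mathrm{diam}(Q) \leq 1$ yields $\sup g / \inf g \leq \exp(G(T^{-1}\omega))$.

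Combining these estimates gives $l(\omega) \leq K + G(T^{-1}\omega) =: A(\omega)$ for a deterministic constant $K$. Since $G$ is tempered by Assumption~\ref{ass:Lipschitz}, item~\ref{ass:Lipschitz_tempered}, and temperedness is invariant under composition with iterates of $T$ and under additive deterministic perturbations, $A$ is tempered, proving the claim. The main technical delicacy is the bookkeeping of cone parameters in the first step --- verifying that both $\cf$ and its image sit in the \emph{contracted} cone so that the $\theta_+^{\hat{a}_\omega}$ estimate from Proposition~\ref{thm:two} is available --- after which the bound reduces to a one-line application of the log-Lipschitz property of the likelihood.
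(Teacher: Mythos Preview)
Your approach shares its opening move with the paper: both arguments place $\cf$ and $\zeta_1(\omega)$ in the refined cone $\cC(\delta c_\omega, \hat a_\omega, a_\omega, \mu, \nu, \hat\mu)$ and then invoke the inequality $\theta_\omega \le \theta_+^{\hat a_\omega} + \log(\tau_2^c/\tau_1^c)$ from the proof of Proposition~\ref{thm:two}. Where you diverge is in bounding $\theta_+^{\hat a_\omega}(\cf, \zeta_1(\omega))$ directly via the pointwise sandwich $\inf_\gamma \zeta_1 \le \int_\gamma \zeta_1\rho\big/\int_\gamma \rho \le \sup_\gamma \zeta_1$.

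The gap lies in the parenthetical claim that $f(Q)$ ``contains all local stable leaves.'' In the setting of the paper (following Viana), the local stable leaves form a partition of $Q$, not merely of a small neighbourhood of $\Lambda$ --- this is what makes the disintegration $\int_Q \phi\,\psi\,\dd m = \int\!\big(\int_\gamma \phi\,\psi\, H_\gamma\big)\, \dd\tilde m(\gamma)$ used in Proposition~\ref{prop: cauchy} and Theorem~\ref{thm:abs_ctn} meaningful. Since $f(\bar Q) \subset Q$ strictly, leaves necessarily extend beyond $f(Q)$; for the solenoid, each disk $\{\theta\}\times B^2$ meets $f(Q)$ only in two small sub-disks. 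Consequently $\scP\cf$ (see Equation~\eqref{eq:transfer_op_diff}), and hence $\zeta_1$, vanishes on a nontrivial portion of every leaf, so $\inf_\gamma \zeta_1 = 0$ and your lower bound collapses. This is exactly the difficulty that motivates the cone machinery in the first place: elements of $\cC_\omega$ may vanish on parts of $Q$ yet still have strictly positive leaf-integrals against test densities.

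The paper instead recycles the full $\Gamma_0$ bound (Equation~\eqref{eq:theta_bound}) from the proof of Proposition~\ref{thm:two}, which controls ratios $\int_{\gamma'}\scL_{T^{-1}\omega}\phi\,\rho'\big/\int_{\gamma''}\scL_{T^{-1}\omega}\phi\,\rho''$ by passing to pre-image leaves and using the deterministic Jacobian bounds $\Gamma_1, \Gamma_2$ --- thereby avoiding any pointwise lower bound on $\zeta_1$. This yields $A(\omega) = \log(\Gamma_2/\Gamma_1) + \log(\tau_2^c/\tau_1^c) + \Xi(\omega)$ with $\Xi(\omega) = 2\hat a_{T\omega} + G(\omega) + 2D_1(\hat a_\omega) + D_0(\hat a_\omega) + c_\omega$, and the remaining work is to check that each summand of $\Xi$ is tempered. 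Your argument could be repaired along similar lines by replacing the pointwise infimum with a lower bound on $\int_{\gamma\cap f(Q)}\rho\big/\int_\gamma\rho$, but that is essentially what the $\Gamma_1$ constant encodes.
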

\begin{proof} 
It can be seen by inspecting the proof of Proposition~\ref{thm:two} that, \[\theta_{\omega}(\cf, z_1(\omega)) \leq \log\Gamma_0 + \log \tau_2/\tau_1 = \log\Gamma_1/\Gamma_2 + \log \tau_2/\tau_1 +\Xi(\omega),\] 
where $\Xi(\omega) = 2\hat{a}_{T\omega}+ G(\omega)+2D_1({\hat{a}_{\omega}}) + D_0({\hat{a}_{\omega}}) + c_{\omega}$.
It suffices to show that $\Xi$ is tempered which will be true if each competent of $\Xi$ is tempered.
We know $G$ is tempered by assumption.  $\hat{a}$, as given by Equation (\ref{eq:a_hat}), is tempered if $ \tilde{G}$ is tempered.
%% by Lemma~\ref{lemma:temp_prop}, part (c), with $r(\omega) = \tilde{G}(\omega)$ and $\lambda = \lambda_{\hat{a}}$.
As $\tilde{G} = \bar{G} + G_1 + G_2$ it suffices to note that $G_1$ and $G_2$ are tempered which is straight forward to prove and left to the reader.
Lastly, $c$ is tempered because it can be checked that $K_0$ is tempered ($K_0 $ is a sum of tempered random variables and constants).
It remains to be shown that $D_1$ and $D_0$ are tempered. Recall that $D_0(\hat{a}_{\omega})$ is the uniform upper bound for the $\theta_{\hat{a}_{\omega}}$-diameter of $ \mathcal{D}(\bar{a}_{\omega}, \mu, \gamma_2) $ in $ \mathcal{D}(\hat{a}_{\omega}, \mu, \gamma_2) $. 
That is \[D_0(\hat{a}_{\omega}) = \sup\{\theta_{\hat{a}_{\omega}}(\rho',\rho''); \rho', \rho'' \in D(\bar{a}_{\omega}, \mu, \gamma_j)\}\]
By Lemma~\ref{lemma:finite_diameter},  \[D_0(\hat{a}_{\omega})\leq 4\hat{a}_{\omega} + log(\tau_2/\tau_1)\] with $\tau_1 = \inf \{(z-z^{\lambda})/(z-z^{-\lambda}): z>1\} < 1$ and $\tau_2 = \sup\{(z-z^{\lambda})/(z-z^{-\lambda}): z>1\} > 1$, where $\lambda = \bar{a}_{\omega}/\hat{a}_{\omega} < 1.$ The same argument for $D_1$ shows that we just need $\hat{a}$ to be tempered, which we have already argued is the case. \end{proof}
\begin{proof}[Proof of Proposition~\ref{thm:cauchy}] 
For any $n>m \in \mathbb{N}$, since by Lemma~\ref{lemma:zeta}, item~\ref{lemma:z}., $z_k$ $\in \mathcal{C}_{\omega}$ for all $k \in \mathbb{N}$, we  have
\begin{align}
\theta_{\omega}(\zeta_m(\omega),\zeta_n(\omega)) &\leq \theta_{\omega}(\zeta_m(\omega),z_{m+1}(\omega))+ ...+\theta_{\omega}(z_{n-1}(\omega),\zeta_n(\omega))\\
&= \sum_{k=m}^{n-1} \theta_{\omega}(z_{k}(\omega),z_{k+1}(\omega)).
\end{align}
Next we apply Lemma~\ref{lemma:z_L}, part 3., and the contraction (\ref{eq:contract}) of Corollary~\ref{cor:contraction} to get
\begin{align}
\theta_{\omega}(z_k(\omega),z_{k+1}(\omega))& = \theta_{\omega}(\scL_{T^{-1}\omega}z_{k-1}(T^{-1}\omega),\scL_{T^{-1}\omega}z_k(T^{-1}\omega))\\
& \leq \Lambda(\omega)\theta_{T^{-1}\omega}(z_{k-1}(T^{-1}\omega),z_k(T^{-1}\omega))\\
& \leq P_k(\omega)\theta_{T^{-k}\omega}(z_0(T^{-k}\omega), z_1(T^{-k}\omega))\\
&= P_k(\omega)\theta_{T^{-k}\omega}(\cf, z_1(T^{-k}\omega)),
\end{align}
where $P_k(\omega) = \Lambda(\omega)\Lambda(T^{-1}\omega)..\Lambda(T^{-k+1}\omega).$
Hence, 
\begin{align}
\theta_{\omega}(\zeta_m(\omega),\zeta_n(\omega)) &\leq \sum_{k=m}^{n-1}P_k(\omega)\theta_{T^{-k}\omega}(\cf, z_1(T^{-k}\omega)),\\
& =  \sum_{k=m}^{n-1}P_k(\omega)l(T^{-k}\omega).\label{eq: cauchy}
\end{align}

Recall from Corollary~\ref{cor:contraction} that $\Lambda(\omega)$ is given by
\begin{equation*}
\Lambda(\omega):= 1-e^{-\bar{D}(\omega)}
\end{equation*}
and that $\Lambda(\omega)<1$ almost surely. It follows that $\log \Lambda < 0$ almost surely and hence the expectation is always well defined and negative. Since $T$ and $T^{-1}$ are assumed measure preserving and ergodic it holds that $\log \Lambda$ is an ergodic process (that is, it satisfies the ergodic theorem,  see e.g. \cite{Breiman1992});
\begin{align}
\lim_{k \to \infty} \frac{1}{k} \sum_{l=0}^{-k+1} \log \Lambda(T^l\omega) &= \mathbb{E}\log (\Lambda(\omega))\\
& \leq -\beta < 0,\label{eq:beta}
\end{align}

holds for a.a. $\omega$, for some $\beta > 0$. Let $\epsilon < \beta/2$, then for a.a. $\omega$, there exists $N_{\omega, \epsilon}$ such that for all $k > N_{\omega, \epsilon}$, 
\begin{align}\label{eq:P}
&\frac{1}{k} \sum_{l=0}^{-k+1} \log \Lambda(T^l\omega) \leq -\beta + \epsilon <0, \\
& \Rightarrow P_k(\omega) = \prod_{l=0}^{-k+1} \Lambda(T^l\omega) \leq e^{(-\beta + \epsilon)k} < 1.
\end{align}

Similarly, by Lemma~\ref{lemma:theta}, since $\log l$ is tempered, there exists, for a.a. $\omega$, $M_{\omega, \epsilon}$ such that for all $k > M_{\omega, \epsilon}$ it holds that 
\begin{align}
&\log l(T^{-k} \omega) \leq \epsilon k\\
&\Rightarrow l(T^{-k} \omega) \leq e^{\epsilon k}.
\end{align}

Let $m,n > \max\{M_{\omega, \epsilon}, N_{\omega, \epsilon}\}$, then we can combine the above into Inequality (\ref{eq: cauchy}) to get
\begin{align}
\theta_{\omega}(\zeta_m(\omega),\zeta_n(\omega)) \leq \sum_{k=m}^{n-1}e^{(-\beta + 2\epsilon)k}  \leq\frac{ e^{(-\beta + 2\epsilon)m}}{1-e^{(-\beta + 2\epsilon)}} \to 0,
\end{align}
as $m \to \infty$. Hence $z_n$ is $\theta_{\omega}$- Cauchy and hence it is also $\theta_+^{\hat{a}_{\omega}}$-Cauchy for a.a. $\omega$.
\end{proof}
We can now turn to the proof of the main theorem:
\begin{proof}[Proof of Theorem~\ref{thm:three}]
  By Proposition~\ref{thm:cauchy}, we know there exists a set $\Omega_1$ of full measure, such that for all $\omega \in \Omega_1$, the sequence $\{\zeta_m(\omega)\}_{n \in \N}$ (defined in Eq.~\ref{eq:z_n}) is Cauchy with respect to the metric $\theta_+^{\hat{a}_{\omega}}$.
  Let  $\{\bar{\zeta}_n(\omega)\}$ be the sequence $\{\zeta_n(\omega)\}$ but normalised so that Proposition~\ref{prop: cauchy} applies. Since $\mathbb{R}$ is complete, for all $\omega \in \Omega_1$, $\lim_{n \to \infty} \int_Q \psi\bar{\zeta}_n(\omega) \idd m$ exists for all continuous functions $\psi$ so that this limit defines a functional on the space of continuous functions for each $\omega \in \Omega_1$. Hence, by the Riesz Representation Theorem (see e.g.  \cite{Rudin87}, Theorem 2.14), for each $\omega \in \Omega_1$, there exists a unique probability measure $\mu_{\omega}$ such that
\begin{equation}\label{eq: stationary_measure}
\int \psi \idd \mu_{\omega} = \lim_{n \to \infty} \int_Q \bar{\zeta}_n(\omega) \psi \idd m
\end{equation}
for all continuous $\psi$. Furthermore, it will follow from Proposition~\ref{thm:reg_cond_prob}, that $\mu$ is a version of the conditional probability of $X_0$ given observations from the infinite past, hence part (1) of the theorem holds and $\mu$ is a regular probability kernel.
Next we show that $\mu$ is covariant. Using Equation (\ref{eq:filterop_measure}), we have
\begin{align*}
  \ncL_{\omega}\mu_{\omega}(\psi)
  & =\frac{\int g(\omega, x)\circ f \, \cdot \, \psi \circ f \idd \mu_{\omega}}{\int g(\omega, x)\circ f \idd \mu_{\omega}} \\
  &= \lim_{n \to \infty}\frac{ \int g(\omega)\circ f \, \cdot \, \psi \circ f \, \cdot \,  \scL^n_{T^{-n}\omega}\cf \idd m}{ \int g(\omega)\circ f \, \cdot \, \scL^n_{T^{-n}\omega}\cf \idd m}\\
&=\lim_{n \to \infty} \frac{\int \psi  \scL^{n+1}_{T^{-(n+1) + 1}\omega}\cf \idd m}{ \int \scL^{n+1}_{T^{-(n+1) + 1}\omega}\cf \idd m}\\
&=\lim_{n \to \infty} \int \psi \bar{\zeta}_{n+1}(T\omega)\idd m = \int \psi \idd \mu_{T\omega},
\end{align*}
hence $\mu_{\omega}$ is covariant, proving part (2) (Equation (\ref{eq:covariance})).

Next, we prove item (3) of the theorem, that is Equation~\ref{eq:exp_conv}.
We note that,
\begin{align}
&\overline{\lim}_{n \to \infty}\Big|\int \psi\nscL_{\omega}^n\phi \idd m - \int \psi \idd \mu_{T^n \omega}\Big| \nonumber \\
& =\overline{\lim}_{n \to \infty}\lim_{k \to \infty}\Big|\int \psi\nscL_{\omega}^n\phi \idd m - \int \psi \bar{\zeta}_k(T^n\omega) \idd m\Big|\nonumber.
\end{align}
By Proposition ~\ref{prop:holder_cone}, there exists an $N(\omega) \in \mathbb{Z}^+$ such that $ \scL_{\omega}^n \phi \in \mathcal{C}_{T^n\omega}$ for all $n \geq N(\omega)$.
Then,
\begin{align*}
  \theta_+^{\hat{a}_{T^n\omega}}(\scL_{\omega}^{n}\phi, \zeta_k(T^n\omega))
  &\leq \theta_{T^{n}\omega}(\scL_{\omega}^{n}\phi,\zeta_k(T^n\omega))\\
& =\theta_{T^{n}\omega}(\scL_{\omega}^{n}\phi,\scL_{\omega}^{n}z_{k-n}(\omega))\\
& \leq \prod_{i=N(\omega)+2}^{n}\Lambda(T^i\omega)\theta_{T^{N(\omega)+1}\omega}(\scL_{\omega}^{N(\omega)+1}\phi,\scL_{\omega}^{N(\omega)+1}\cf)\\
& \leq \prod_{i=N(\omega)+2}^{n}\Lambda(T^i\omega)\bar{D}(T^{N(\omega)+1}\omega)\\
& \leq e^{(n-N(\omega)-1)(-\beta + \epsilon)}\bar{D}(T^{N(\omega)+1}\omega) \to 0
\end{align*}
as $n \to \infty$, where $\beta > 0$ and $0 < \epsilon \leq \beta/2$ are as in Equation (\ref{eq:P}) in the proof of Proposition~\ref{thm:cauchy}.

Next, we use the same argument as in the proof of Proposition~\ref{prop: cauchy}. First, we assume that $\log \psi$ is $(\hat{a}_{T^n\omega}/2, \hat{\mu})$-H\"{o}lder continuous for all $n$ and that $\hat{a}_{\omega} \geq 2a_0$ for all $\omega$ (otherwise, we can redefine $\hat{a}$ in Proposition~\ref{thm:one}, Equation (\ref{eq:a_hat})).
Thus, using Equation (\ref{eq:cauchy_ineq}) it holds that
\begin{align}
& \overline{\lim}_{n \to \infty}\lim_{k \to \infty}\Big|\int_Q \nscL_{\omega}^{n}\phi \cdot \psi \idd m - \int_Q \bar{\zeta}_k(T^n\omega) \cdot \psi \idd m \Big|\nonumber\nonumber\\
& \leq\overline{\lim}_{n \to \infty}\lim_{k \to \infty}\Big|\int_Q  \bar{\zeta}_k(T^n\omega)\cdot \psi \idd m\Big| \Big|\frac{\int_Q \nscL_{\omega}^{n}\phi \cdot \psi \idd m}{\int_Q \bar{\zeta}_k(T^n\omega) \cdot \psi \idd m}-1\Big|\nonumber\\
& \leq \overline{\lim}_{n \to \infty}\lim_{k \to \infty}\sup|\psi|\Big(e^{\theta_+^{\hat{a}_{T^n\omega}}(\scL_{\omega}^{n}\phi,z_k(T^n\omega) )}-1\Big),\nonumber\\
& \leq \overline{\lim}_{n \to \infty}\lim_{k \to \infty}\sup|\psi|\Big(e^{e^{(n-N(\omega)-1)(-\beta + \epsilon)}\bar{D}(T^{N(\omega)+1}\omega)}-1\Big)\nonumber\\
& = \sup|\psi|\lim_{n \to \infty}\Big(e^{e^{(n-N(\omega)-1)(-\beta + \epsilon)}\bar{D}(T^{N(\omega)+1}\omega)}-1\Big),\nonumber\\
& = 0. \label{eq:rate}
\end{align}

By the same argument as in proof of Proposition~\ref{prop: cauchy} (or see \cite{Viana1997}, proof of Proposition 4.7) we can deduce that the above holds also for general $\hat{\mu}$-H\"{o}lder continuous functions. 

Next, let $\xi_n: = e^{(n-N(\omega)-1)(-\beta + \epsilon)}\bar{D}(T^{N(\omega)+1}\omega)$. Then, using Taylor's theorem with the mean value form of the remainder, we have
\begin{equation}
e^{\xi_n}-1 = \xi_n+\frac{e^{\tilde{\xi}_n}-1}{2}\xi_n^2 = \xi_n(1+\frac{e^{\tilde{\xi}_n}-1}{2}\xi_n),
\end{equation}
where $\tilde{\xi}_n$ is some real number in $[0, \xi_n]$.
Then, for sufficiency large $n$
\begin{align*}
  & \log \Big |\int \nscL_{\omega}^{n}\phi \psi \idd m - \int  \psi \idd \mu_{T^n \omega}\Big| \\
  & \leq \log \sup |\psi|+(n-N(\omega)-1)(-\beta + \epsilon)\\
  & \quad + \log\bar{D}(T^{N(\omega)+1}\omega) + \log (1+\frac{e^{\tilde{\xi}_n}-1}{2}\xi_n),
\end{align*}
so that
\begin{equation}
\lim_{n \to \infty}\frac{1}{n}\log \Big |\int \nscL_{\omega}^{n}\phi \psi \idd m - \int  \psi \idd \mu_{T^n \omega}\Big| \leq -\tilde{\beta},
\end{equation}
since $\xi_n \to 0$ as $n \to \infty$, where $\tilde{\beta} = \beta-\epsilon>0$.
Proof of part (4) follows by the same arguments as for part (3) above where we note that Proposition~\ref{prop:holder_cone} also means that there exists an $\tilde{N}(\omega) \in \mathbb{Z}^+$ such that $ \scL_{T^{-n}\omega}^n \phi \in \mathcal{C}_{\omega}$ for all $n \geq \tilde{N}(\omega)$.
\end{proof}
\begin{proposition}\label{thm:reg_cond_prob}
  Let $\mu$ be as constructed in the proof of Theorem~\ref{thm:three}, Equation~\eqref{eq: stationary_measure}.
  Then for any fixed $A \in \cB_{M}$, we have $\mu_{\omega}(A) = \P(X_0 \in A |Y_{-\infty{:}0})$ almost surely.
\end{proposition}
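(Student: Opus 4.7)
The approach is to combine L\'evy's upward martingale convergence theorem, which realises the right-hand side of the claimed identity as an almost-sure limit, with the filter stability of Theorem~\ref{thm:three}, which identifies that limit with $\mu_\omega$. Fix $A \in \cB_{M}$. The random variables $\nu_n(\omega) := \P(X_0 \in A \mid Y_{-n+1:0})$ form a uniformly bounded martingale with respect to the filtration $\mathcal{F}_n := \sigma(Y_{-n+1:0})$, which increases to $\mathcal{F}_\infty := \sigma(Y_{-\infty:0})$, so L\'evy's upward theorem yields $\nu_n(\omega) \to \P(X_0 \in A \mid Y_{-\infty:0})$ almost surely as $n\to\infty$. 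On the other hand, by the bilateral stationarity of $\{(X_k,Y_k)\}_{k\in\Z}$ noted at the end of Section~\ref{sec:nonlin_filter}, the marginal law of $X_{-n+1}$ equals $\mu_0$ for every~$n$; iterating the filter recursion of Proposition~\ref{prop:random_filtering} from this prior through the observations $Y_{-n+1},\ldots,Y_0$ yields the representation $\nu_n=\ncL^n_{T^{-n+1}\omega}\mu_0(A)$ almost surely.

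The principal remaining task is to show $\ncL^n_{T^{-n+1}\omega}\mu_0\to\mu_\omega$ in a sense strong enough to give $\nu_n(A)\to\mu_\omega(A)$ almost surely for each fixed $A$. This is a filter stability statement with \emph{singular} initial condition $\mu_0$ and lies outside the direct scope of Theorem~\ref{thm:three}(3)--(4), which requires a $\log$-H\"older density with respect to $m$. The plan is to approximate $\mu_0$ weakly by the densities $\phi_N:=\scP^N\cf$: Lemma~\ref{lemma:zeta} gives $\cf\in\cC_\omega$, Corollary~\ref{lemma:inv_P} propagates this to $\phi_N\in\cC_\omega$ for every $N$, and the defining SRB property of $\mu_0$ is precisely that $\phi_N\,dm\to\mu_0$ weakly as $N\to\infty$. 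For each fixed $n$ the operator $\ncL^n_{T^{-n+1}\omega}$ is weak-$*$ continuous on probability measures, since $f$ is continuous and each likelihood $g(Y_k,\cdot)$ is bounded continuous on the compact manifold $M$; hence $\ncL^n_{T^{-n+1}\omega}(\phi_N\,dm)\to\ncL^n_{T^{-n+1}\omega}\mu_0$ weakly as $N\to\infty$. For each fixed $N$, Theorem~\ref{thm:three} applied to $\phi_N$ gives $\ncL^n_{T^{-n+1}\omega}(\phi_N\,dm)\to\mu_\omega$ weakly and exponentially fast as $n\to\infty$. A diagonal argument choosing $N=N(n)$ slow enough compared to the contraction rate then produces the weak convergence $\ncL^n_{T^{-n+1}\omega}\mu_0\to\mu_\omega$ almost surely.

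To pass from weak convergence to convergence at the fixed Borel set $A$, approximate $A$ from inside and outside by $\mu_\omega$-continuity sets $A_\varepsilon$ with $\mu_\omega(A\triangle A_\varepsilon)<\varepsilon$; Portmanteau then delivers $\nu_n(A_\varepsilon)\to\mu_\omega(A_\varepsilon)$ almost surely, while the differences $|\nu_n(A)-\nu_n(A_\varepsilon)|$ and $|\mu_\omega(A)-\mu_\omega(A_\varepsilon)|$ are controlled by outer regularity of both Radon measures together with a further limit $\varepsilon\to0$. Combining this with the L\'evy limit identifies $\mu_\omega(A)=\P(X_0\in A\mid Y_{-\infty:0})$ almost surely, proving the proposition. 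The genuinely hard ingredient is the stability step with singular prior $\mu_0$: because $\mu_0$ lies outside the cone framework, the Hilbert-metric contraction of Corollary~\ref{cor:contraction} is inaccessible on it, and the argument must truly interleave the weak approximation $\mu_0\approx\phi_N\,dm$ (inside the cones) with the exponential contraction in~$n$.
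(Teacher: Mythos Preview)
Your strategy---L\'evy's upward theorem combined with the approximation $\mu_0=\lim_N \scP^N\cf\,dm$---is exactly the paper's. However, you miss the key simplification and introduce two unnecessary complications. First, the diagonal argument is not needed: since $\scP^N\cf\in\cC_{T^{-n}\omega}$ for \emph{every} $N$ (this is precisely what Corollary~\ref{lemma:inv_P} gives, applied at $T^{-n}\omega$), the Hilbert-metric contraction of Corollary~\ref{cor:contraction} bounds $\theta_+^{\hat a_\omega}\bigl(\scL^n_{T^{-n}\omega}\scP^N\cf,\ \zeta_m(\omega)\bigr)$ by a quantity that is \emph{uniform in $N$} and vanishes as $n\to\infty$. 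Hence $\bigl|\int\psi\,\nscL^n_{T^{-n}\omega}\scP^N\cf\,dm-\mu_\omega(\psi)\bigr|$ is bounded independently of $N$; you may simply send $N\to\infty$ first (using the weak-$*$ continuity you already noted) and then $n\to\infty$. This is what the paper does. Related to this, observe that $\phi_N=\scP^N\cf$ vanishes outside $f^N(Q)$ and is therefore not strictly positive, hence not $\log$-H\"older on $Q$; Theorem~\ref{thm:three}(3)--(4) does not literally apply to it. You must invoke the contraction directly, which is fine since you have already placed $\phi_N$ in the cone.

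Second, the Portmanteau/continuity-set step is both unnecessary and shaky: the sets $A_\varepsilon$ would depend on the random measure $\mu_\omega$, and your control of $|\nu_n(A)-\nu_n(A_\varepsilon)|$ is not justified. It is cleaner to work with continuous (indeed H\"older) test functions $\psi$ throughout, as the paper does: once $\E(\psi(X_0)\mid Y_{-\infty:0})=\mu_\omega(\psi)$ almost surely for every continuous $\psi$, the identification of $\mu_\omega$ with a regular version of $\P(X_0\in\cdot\mid Y_{-\infty:0})$ follows because Borel probability measures on the Polish space $M$ are determined by their integrals against continuous functions.
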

\begin{proof}
  It follows from a straightforward application of the Bayes rule that
  \beq{equ:5.10}
  \E(\psi(X_0)| Y_{-n{:}0})
  = \frac{%
    \int \! \psi \circ f^n(x)
    \prod_{k = 0}^n g(Y_{-k}, f^{n-k}(x))
    \idd \mu_0(x)
  }{%
    \int \! \prod_{k = 0}^n g(Y_{-k}, f^{n-k}(x))
    \idd \mu_0(x)
  }
  \eeq
  Consider the numerator on the RHS of Equation (\ref{equ:5.10}):
\beq{equ:5.20}
\begin{split}
  & \int \! \psi \circ f^n(x)\,  \prod_{k=0}^{k=n} g(T^{-k}\omega) \circ f^{n-k}(x) \idd \mu_0(x) \\
  & = \lim_{l \to \infty}
  \int \! \psi \circ f^n(x)\,  \prod_{k=0}^{k=n} g(T^{-k}\omega) \circ f^{n-k}(x) \scP^{l} \cf \idd m(x) \\
  & = \lim_{l \to \infty}
  \int \! \psi(x) \,  \prod_{k=0}^{k=n} g(T^{-k}\omega) \circ f^{-k}(x) \scP^{n + l} \cf \idd m(x) \\
  & = \lim_{l \to \infty}
  \int \! \psi(x) \, \scL^n_{T^{-n} \omega} \scP^{l} \cf \idd m(x)
\end{split}
\end{equation}
where we have used the fact (see \cite{Viana1997}) that the SRB measure is constructed as the weak limit 
\[
\mu_0(\psi) = \lim_{k \to \infty} \int \! \psi \scP^k \cf \idd m
\]
for continuous $\psi$.
Applying the same reasoning for the denominator on the RHS of Equation (\ref{equ:5.10}) we find
\beq{equ:5.30}
\E(\psi(X_0)| Y_{-n{:}0})
= \lim_{k \to \infty}
\int \! \psi(x) \, \nscL^n_{T^{-n} \omega} \scP^{k} \cf \idd m(x).
\eeq
(A standard argument shows that the denominator is zero with zero probability.)
On the other hand, note that $\scL^n_{T^{-n}\omega}\scP^k\cf \in \mathcal{C}_{\omega}$ for all $n$ and $k$ using Lemma~\ref{lemma:zeta}, part~\ref{lemma:function_1} and Corollary~\ref{lemma:inv_P}.
Therefore, if $n > m$, by the same argument as in Proposition~\ref{thm:cauchy}, it holds that
\begin{align*}
  \theta_+^{\hat{a}_{\omega}}(\scL^n_{T^{-n}\omega}\scP^k\cf  , \scL^m_{T^{-m}\omega}\cf  ) &\leq \theta_{\omega}(\scL^n_{T^{-n}\omega}\scP^k\cf  , \scL^m_{T^{-m}\omega}\cf )\\ &\leq \sum_{j=n}^{m-1}e^{(-\beta + 2\epsilon)j}\\ & \leq\frac{ e^{(-\beta + 2\epsilon)n}}{1-e^{(-\beta + 2\epsilon)}},
\end{align*}
for any $k \in \N$, where $\beta > 0$ and $0 < \epsilon \leq \beta/2$ are as in Equation (\ref{eq:P}) in the proof of Proposition~\ref{thm:cauchy}.
We may now use a similar argument as in the proof of Proposition~\ref{prop: cauchy} and Theorem~\ref{thm:three}, where we first assume that $\log \psi$ is $(\hat{a}_{\omega}/2, \hat{\mu})$-H\"{o}lder continuous and that $\hat{a}_{\omega} \geq 2a_0$ for all $\omega$.
Thus, using Equation (\ref{eq:cauchy_ineq}) it holds that
\begin{align*}
  & \Big|\int \psi \cdot \nscL^n_{T^{-n}\omega}\scP^k\cf \idd m - \mu_{\omega}(\psi)\Big| \\
  & = \lim_{m \to \infty} \Big|
  \int \psi \cdot \nscL^n_{T^{-n}\omega}\scP^k\cf \idd m
  -\int \psi \cdot \nscL^m_{T^{-m}\omega}\cf \idd m
  \Big|\\
  & \leq \lim_{m \to \infty}\Big|
  \int \psi \cdot \nscL^m_{T^{-m}\omega}\cf \idd m
  \Big| \Big|
  \frac{%
   \int \psi \cdot \nscL^n_{T^{-n}\omega}\scP^k\cf \idd m}{%
    \int \psi \cdot \nscL^m_{T^{-m}\omega}\cf \idd m}
  -1 \Big|\\
  & \leq \lim_{m \to \infty}\sup |\psi|
  \Big( e^{
    \theta_{+}^{\hat{a}_\omega}
    (\scL^n_{T^{-n}\omega}\scP^k\cf,
    \scL^m_{T^{-m}\omega}\cf ) }
  - 1\Big)\\
  &  \leq \sup |\psi|
  \Big(e^{
    \frac{ e^{(-\beta + 2\epsilon)n}}{%
      1-e^{(-\beta + 2\epsilon)}}}-1\Big). 
\end{align*}
The above inequality holds for a general $\psi$ by the same argument as in proof of Proposition~\ref{prop: cauchy}.
Using this in Equation~\eqref{equ:5.30}, we find
\beqn{equ:5.40}
\big|
\E(\psi(X_0)| Y_{-n{:}0}) - \mu_{\omega}(\psi)
\big|
\leq \sup |\psi|
  \Big(e^{
    \frac{ e^{(-\beta + 2\epsilon)n}}{%
      1-e^{(-\beta + 2\epsilon)}}}-1\Big). 
\eeq
Taking the limit $n \to \infty$ and using Martingale convergence on the left hand side proves the claim.
\end{proof}
\begin{proof}[Proof of Corollary~\ref{cor:conv}] 
We show part (1). By Theorem~\ref{thm:three}, part (3),
\[\lim_{n \to \infty}(\int \psi\nscL_{\omega}^n\phi \idd m - \int \psi \idd \mu_{T^n \omega}) \to 0,\]
for all $\hat{\mu}$-H\"{o}lder continuous functions $\psi$. Any continuous function $\psi$ can be uniformly approximated by $\hat{\mu}$-H\"{o}lder continuous functions (this is a consequence of the Stone Weierstrass theorem (see e.g. \cite{Rudin76}, Theorem 7.32), since $\hat{\mu}$-H\"{o}lder continuous function form an algebra of real continuous functions that is separating the points in $ M $). That is, for any $\epsilon >0$, there exists a  $\hat{\mu}$-H\"{o}lder continuous function $\hat{\psi}$ such that \[\sup|\psi - \hat{\psi}| \leq \epsilon\] and hence,
\[\Big|\int \psi\nscL_{\omega}^n\phi \idd m - \int \psi \idd \mu_{T^n \omega}\Big| \leq \Big|\int \hat{\psi}\nscL_{\omega}^n\phi \idd m - \int \psi \idd \mu_{T^n \omega}\Big| + 2\epsilon,\]
where we recall that $\nscL_{\omega}^n$ denotes the normalised operator so that $\int \nscL_{\omega}^n\phi \idd m =~1$ for all $n$. Thus we have convergence for all continuous $\psi$ as required.

Part (2) follows similarly from part (4) of Theorem~\ref{thm:three}.
\end{proof}
%
%%%%%%%%%%%%%%%%%%%%%%%%%%%%%%%%%
% 
\section{Proofs of Theorem~\ref{thm:abs_ctn} and Corollary~\ref{thm:support}}
\label{sec:abs_ctn}
The notation and proof of Thm.~\ref{thm:abs_ctn} follow closely that of \cite{Viana1997}, Lemma 4.8. (which we restate below as Lemma~\ref{lemma:abs_cts_SRB}) where the SRB measure is shown to be equivalent to Lebesgue on such sets. 
Let $\mu_{0}$ denote the unique SRB measure for the dynamics $f$ as before.
\begin{lemma}\label{lemma:abs_cts_SRB}
There is a $K>0$ such that for every  $\psi \in L^1(\cB_s),$$$ \frac{1}{K} \int_Q \psi \idd m \leq \int_Q \psi \idd \mu_{0} \leq K\int_Q \psi \idd m. $$
\end{lemma}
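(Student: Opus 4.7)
The plan is to exploit the variational construction of the SRB~measure, $\mu_0(\psi) = \lim_{k \to \infty} \int \psi \scP^k \cf \,\idd m$ for continuous~$\psi$, together with the Rohlin disintegration of $m$ along local stable leaves (as introduced in Section~\ref{sec:stable_leaves}) and the fact that $\scP^k \cf$ sits in a $\scP$--invariant cone for every $k$. Indeed, by Lemma~\ref{lemma:zeta}, item~\ref{lemma:function_1} we have $\cf \in \cC_{\omega}$ for any fixed~$\omega$, and by Corollary~\ref{lemma:inv_P} the cone is invariant under $\scP$, so the whole sequence $\{\scP^k \cf\}_{k \geq 0}$ lies in $\cC_{\omega}$. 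With $H$ denoting the log-H\"older density of the disintegration and $I_k(\gamma) := \int_\gamma \scP^k \cf \cdot H \,\idd m_\gamma$, everything reduces to showing that $I_k(\gamma)$ is bounded above and below by constants independent of $k$ and $\gamma$.

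First, I would use the disintegration to rewrite, for any nonnegative $\cB_s$-measurable $\psi$ (which is, up to null sets, constant on each leaf),
\[
\int_Q \psi \,\idd m = \int \psi(\gamma)\,\idd \tilde m(\gamma),
\qquad
\int_Q \psi \scP^k \cf \,\idd m = \int \psi(\gamma)\, I_k(\gamma)\,\idd \tilde m(\gamma),
\]
where I have used $\int_\gamma H\,\idd m_\gamma = 1$. The proof of the lemma will then follow by passing to the limit $k\to\infty$ in the second identity and invoking the uniform bounds on $I_k(\gamma)$.

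The comparison between leaves is the heart of the matter. Taking $\rho = H|_\gamma$, which is admissible by the log-H\"older continuity of $H$, the $\scC$-condition in Definition~\ref{def:cone} applied to $\scP^k \cf \in \cC_{\omega}$ gives
\[
e^{-c_{\omega}\,d(\gamma,\delta)^\nu} \leq \frac{\int_\gamma \scP^k \cf \cdot H|_\gamma}{\int_\delta \scP^k \cf \cdot \pi^*(H|_\gamma)} \leq e^{c_{\omega}\,d(\gamma,\delta)^\nu}.
\]
Since $\pi^*(H|_\gamma)(y)/H(y) = H(\pi(y))|\det D\pi|(y)/H(y)$ is pinched between universal positive constants (by the H\"older continuity of $\log H$, boundedness of $\log \dtpi$ from Lemma~\ref{lem:4.10}~P1, and $d(y,\pi(y))\leq 1$), I can replace $\pi^*(H|_\gamma)$ by $H|_\delta$ in the denominator at the cost of a further universal factor. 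This yields constants $c_1, c_2>0$ with $c_1 \leq I_k(\gamma)/I_k(\delta) \leq c_2$ uniformly in $k, \gamma, \delta$. Because $\int_Q \scP^k \cf \,\idd m = 1$ translates through the disintegration into $\int I_k(\gamma)\,\idd\tilde m(\gamma) = 1$, there must exist $\gamma_*=\gamma_*(k)$ with $I_k(\gamma_*)\leq 1 \leq$ (a nearby value), hence $c_1 \leq I_k(\gamma) \leq c_2^{-1}$ uniformly; substituting and letting $k\to\infty$ produces the claimed inequalities with $K = \max(c_2, c_1^{-1})$. A routine density argument extends the bound from continuous to $L^1(\cB_s)$ test functions.

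The main obstacle I expect is the technical step of replacing $\pi^*(H|_\gamma)$ by $H|_\delta$: it rests on the simultaneous exploitation of absolute continuity of the local stable foliation (to bring in $H$ at all) and of the H\"older regularity of the holonomy $\pi$. A secondary complication is that $\cF^s_{\text{loc}}$ is not, in general, a partition of $Q$, so the Rohlin decomposition must be carried out on a finite cover by small open balls as explained in Section~\ref{sec:stable_leaves}, and the resulting leaf-wise bounds patched together into a single constant $K$.
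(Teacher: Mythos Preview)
Your proposal is correct and follows essentially the same route as the paper (which simply cites \cite{Viana1997}, Lemma~4.8, without giving a proof) and as the paper's own proof of Theorem~\ref{thm:abs_ctn}, item~\ref{itm:3.10}: disintegrate along local stable leaves, use that the iterates $\scP^k\cf$ lie in a fixed cone, compare the leaf integrals $I_k(\gamma)$ via the $\scC$-condition, and normalise using $\int I_k\,\idd\tilde m = 1$. One small remark: your pointwise replacement of $\pi^*(H|_\gamma)$ by $H|_\delta$ works here because $\scP^k\cf\geq 0$, whereas the paper (in the analogous step of Theorem~\ref{thm:abs_ctn}) uses Lemma~\ref{lemma:conditionB}, which does not require nonnegativity of the cone element; also, your displayed bounds should read $c_1\leq I_k(\gamma)\leq c_2$ rather than $c_2^{-1}$.
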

See \cite{Viana1997}, Lemma 4.8, for a proof.
\begin{proof}[Proof of Thm.~\ref{thm:abs_ctn}]
  To prove item~\ref{itm:3.10}, we use the absolute continuity of the local stable foliation which implies the existence of $(a_0, \nu_0)$-$\log$ - H\"{o}lder disintegration as described in Subsection~\ref{sec:stable_leaves}. Let $\gamma$ and $\delta$ be two stable leaves and let  $H_{\gamma} = H|\gamma \idd m_{\gamma}$ and  $H_{\delta} = H|\delta \idd m_{\gamma}$. Let \[\tilde{H}_{\gamma} =\pi^*H_{\gamma} = (H_{\gamma}\circ \pi) \dtpi,\] where $\pi = \pi(\delta, \gamma).$ 

By the construction of the cones in Proposition~\ref{thm:one}, we have that $\mathcal{D}(a_0, \nu_0, \gamma) \subset \mathcal{D}(\bar{a}_{\omega}, \hat{\mu}, \gamma) \subset \mathcal{D}(\hat{a}_{\omega}, \hat{\mu}, \gamma)$   (since $\hat{\mu} \leq \nu_0$ and from equation (\ref{eq:a_2}) we have that $\hat{a}_{\omega} > \bar{a}_{\omega} > a_0$). It follows that $H_{\gamma} \in \mathcal{D}(a_0, \nu_0, \gamma) \subset  \mathcal{D}(\hat{a}_{\omega}, \hat{\mu}, \gamma).$

By Lemma~\ref{lemma:Note2}, $\tilde{H}_{\gamma} \in \mathcal{D}(a_0a_0^{\nu_0}+a_0, \nu_0, \delta)$, so that if we redefine $\bar{G}(\omega): = G(\omega) + (K_1 + K_2)/\lambda_s^{\mu} + a_0/\lambda_a$, we have that $a_{\omega} > a_0$ by equation (\ref{eq:a_omega}). Hence by Equation (\ref{eq:pi_cone_map}), $\hat{a}_{\omega} > a_{\omega}a_0^{\nu} + a_0> a_0a_0^{\nu} + a_0$, so that  $\tilde{H}_{\gamma} \in \mathcal{D}(\hat{a}_{\omega}, \hat{\mu}, \delta).$

It holds by Lemma~\ref{lemma:finite_diameter} that $ \mathcal{D}(\bar{a}_{\omega}, \hat{\mu}, \gamma) $ has finite $\theta_{\hat{a}_{\omega}}$-diameter in  $ \mathcal{D}(\hat{a}_{\omega}, \hat{\mu}, \gamma) $. Furthermore, the upper bound of the diameter does not depend on $\gamma$. 
Denote by $D_0(\hat{a}_{\omega})$ the uniform (in $ \gamma $) upper bound for the $\theta_{\hat{a}_{\omega}}$-diameter.

Since $\bar{\zeta}_n(\omega) \in \mathcal{C}_\omega$ for all $n \in \mathbb{N}$, it holds that
\begin{align}
\frac{\int_{\gamma}\bar{\zeta}_n(\omega) H_{\gamma}}{\int_{\delta}\bar{\zeta}_n(\omega) H_{\delta}} &= \frac{\int_{\delta}\bar{\zeta}_n(\omega) \tilde{H}_{\gamma}}{\int_{\delta}\bar{\zeta}_n(\omega) H_{\delta}} \frac{\int_{\gamma}\bar{\zeta}_n(\omega) H_{\gamma}}{\int_{\delta}\bar{\zeta}_n(\omega) \tilde{H}_{\gamma}}\\
& \leq \exp({\theta_+(\tilde{H}_{\gamma}, H_{\delta})} + c_\omega d(\gamma, \delta)^{\nu})\\
 &\leq \exp(D_0(\hat{a}_{\omega}) + c_{\omega}): = K(\omega). \label{eq:K_}
\end{align}

Since
\begin{align}
1= \int \bar{\zeta}_n(\omega) \idd m = \int \int_{\delta} \bar{\zeta}_n(\omega) \ H_{\delta} \ d\tilde{m}(\delta),
\end{align}
we have that 
\begin{align}
1=\int \Big(\frac{\int_{\delta} \bar{\zeta}_n(\omega) \ H_{\delta}}{\int_{\gamma} \bar{\zeta}_n(\omega) \ H_{\gamma}}\Big) \, \cdot \, \Big(\int_{\gamma} \bar{\zeta}_n(\omega) \ H_{\gamma}\Big) \,d\tilde{m}(\delta) &\geq \int_{\gamma} \bar{\zeta}_n(\omega) H_{\gamma}\ \frac{1}{K(\omega)},
\end{align}
since $\int_{\gamma}\bar{\zeta}_n(\omega) \ H_{\gamma}$ does not vary with $\delta$ since $H_{\gamma} = H|\gamma \idd m_{\gamma}$ and $m_{\gamma}$ is the measure induced on $\gamma$ by the Riemannian metric.
% 
%% \begin{comment}
%% LEA: why does $\int d\tilde{m}(delta) = 1?$ Is this an assumption on the diameter of the torus?
%% \end{comment}
%% %
Hence,
\begin{equation}\label{eq:K_2}
\int_{\gamma} \bar{\zeta}_n(\omega) H_{\gamma} \leq K(\omega).
\end{equation}

Finally,
\begin{align}
\int_Q \psi \bar{\zeta}_n(\omega) \idd m& = \int \int_{\gamma} \psi \bar{\zeta}_n(\omega) H_{\gamma} d\tilde{m}(\gamma) \\
 & = \int \psi(\gamma)\int_{\gamma} \bar{\zeta}_n(\omega) H_{\gamma} d\tilde{m}(\gamma)\\ &\leq K(\omega) \int \psi(\gamma)d\tilde{m}(\gamma) \\
 &= K(\omega) \int \psi \idd m
\end{align}
since $\psi$ is constant on each local stable leaf. Passing to the limit $n \to \infty$ gives the result while the left hand side of the inequality can be obtained in the same way.
%
%% \begin{comment}
%%   %
%%   Clarification of the last line where we pass to the limit:
%% $\psi \in L^1(\cB_s)$ means that it is the pointwise limit of simple functions $\psi_k$ in $L^1(\cB_s)$ by definition. This means that it is a finite sum of indicator functions on sets in $\cB_s$. By definition of the the integral on a measure we have that $\int \psi \idd \mu_{\omega} = \lim_{k \to \infty}\int \psi_k \idd \mu_{\omega}$, where $\psi_k = \sum_{j=0}^{k}l_k\cf_{A_k}$, with $A_k \in \cB_s$.
%% We note that $\int \psi_k \idd \mu_{\omega} =  \sum_{j=0}^{k}l_k\mu_{\omega}(A_k)$ and that $\mu_{\omega}(A_k) = \sup_{\phi}\{\lim_{p \to \infty}\int \phi \hat{z}_p(\omega) \idd m; \phi \in C^0(\cB_s), 0\leq \phi \leq 1, \text{supp}(\phi)\subseteq A_k\}$, because we can approximate an indicator function by continuous functions also constant on stable leaves. To see this, note that $\cB_s$ is generated by open sets which are unions of local stable leaves. Each connected component can be parametrised by the angle in the unstable direction so that we can define a continuous functions on that interval arbitrarily close to the indicator function. So that by above inequality,  $\mu_{\omega}(A_k) \leq \sup_{\phi}\{K(\omega)\int\phi \idd m;\phi \in C^0(\cB_s), 0\leq \phi \leq 1, \text{supp}(\phi)\subseteq A_k\} = K(\omega)m(A_k)$. Implies that $\int \psi \idd \mu_{\omega} = lim_{k \to\infty} \sum l_k\mu_{\omega}(A_k) \leq lim_{k \to \infty}K(\omega)\sum l_km(A_k) = K(\omega)\int \psi \idd m.$
%% \end{comment}
%% %

%
To prove item~\ref{itm:3.20}, note that from Lemma~\ref{lemma:abs_cts_SRB} and item~\ref{itm:3.10} of the Theorem it follows that 
\[\frac{1}{K(\omega)K} \int_Q \psi \idd \mu_{0}  \leq \int_Q \psi \idd \mu_{\omega} \leq K(\omega)K \int_Q \psi \idd \mu_{0}.\]
\end{proof}
% 
% 
%% \begin{comment}
%% We suspect, but have not proved, that the filtering measure will not be absolutely continuous in general, that is, the relation of Lemma~\ref{lemma:abs_cts} will not hold for any integrable function $\psi$. This is because, if we observe ``too precisely'' where along the unstable direction the signal is likely to be found then absolute continuity will not hold.

%% We can see  this if we consider the solenoid example, as described in Section~\ref{sec:solenoid}. Recall that the dynamics is produced by the map $ f: T \to T $ given by \[S^1 \times B^2 \ni (\theta, z) \to (2\theta \mod \mathbb{Z}, \rho e^{2\pi i \theta} + \lambda z)\] with suitable constants $\lambda +\rho < 1$. It can easily be seen that, due to the contraction, it is sufficient to know the expanding direction to determine the precise location of the signal. That is, knowing the $\theta$ coordinate of this system precisely would determine asymptotically the location of the $z$ coordinate also. However, knowing a coordinate precisely corresponds to the Dirac $\delta$ distribution and hence it is not continuous.

%%  If we think of the likelihood function as acting to concentrate the prior, we suspect that even for a H\"{o}lder continuous likelihood function to observe $\theta$, even without any information on $z$ (corresponds to a likelihood that is constant on the local stable leaves), the filtering process leads to a singular filtering measure, if the ``concentrating'' action is stronger than that of the expanding dynamics.
%% \end{comment}
%% %

%
We now turn to the proof of Corollary~\ref{thm:support}.
For a separable topological space, the support of a measure $\mu$ can be defined as
\[\text{supp}(\mu): = \{x \in Q; x \in N_x \Rightarrow \mu(N_x)>0\},\] where $N_x$ denotes any open neighbourhood of $x$.
We note that the support of a measure is a closed set.
\begin{proof}[Proof of Cor.~\ref{thm:support}]
  To show the first item, recall that by Theorem~\ref{thm:three}, item~1 that
  \[
  \tilde{\P}(X_0 \in A|Y_{-\infty{:}0})(\omega) = \mu_{\omega}(A)
  \]
  for $\omega \in \Omega_{X_0}$, a set of full $\tilde{\P}$-measure.
  Taking expectations gives the result.
  To prove the second item, let $A = S^c$. 
Since $S$ is the support of $\mu_0$, $\mu_0(A) = 0$. Therefore $\mathbb{E}(\mu_{\omega}(A)) = 0$, hence, $\mu_{\omega}(A) = 0$ for all $\omega$ in a set $\Omega_A$ of measure 1.
Hence $\text{supp}(\mu_{\omega}) \subset \text{supp}(\mu_0) \subset \Lambda$.
\end{proof}
% 
%% %
\section*{Acknowledgments}%
Fruitful discussions with Amit Apte, Alberto Carrassi, Colin Grudzien, Horatio Boedihardjo, Sandro Vaienti and Dan Crisan are gratefully acknowledged.
L.Olja\v{c}a was supported by EPSRC Centre for Doctoral Training in Mathematics of Planet Earth, Grant No:~EP/L016613/1 and by the UK National Centre for Earth Observations.
\addcontentsline{toc}{section}{References}
% \bibliographystyle{plain}
% \bibliography{/Users/jochen/TeX/Lea_Literatur}

\begin{thebibliography}{10}

\bibitem{Aaronson1997}
Jon Aaronson.
\newblock {\em An introduction to infinite ergodic theory}, volume~50 of {\em
  Mathematical Surveys and Monographs}.
\newblock American Mathematical Society, Providence, RI, 1997.

\bibitem{Anderson79}
Brian~D. Anderson and John~B. Moore.
\newblock {\em Optimal filtering}.
\newblock Prentice-Hall, Englewood Cliffs, NJ, 1979.

\bibitem{Anosov67}
D.~V. Anosov.
\newblock Geodesic flows on closed {R}iemannian manifolds of negative
  curvature.
\newblock {\em Trudy Mat. Inst. Steklov.}, 90:209, 1967.

\bibitem{Atar97}
Rami Atar and Ofer Zeitouni.
\newblock Exponential stability for nonlinear filtering.
\newblock {\em Ann. Inst. H. Poincar\'{e} Probab. Statist.}, 33(6):697--725,
  1997.

\bibitem{Baladi2000}
Viviane Baladi.
\newblock {\em Positive transfer operators and decay of correlations},
  volume~16 of {\em Advanced Series in Nonlinear Dynamics}.
\newblock World Scientific Publishing, River Edge, NJ, 2000.

\bibitem{Bax04}
Peter Baxendale, Pavel Chigansky, and Robert Liptser.
\newblock Asymptotic stability of the Wonham filter: Ergodic and nonergodic
  signals.
\newblock {\em SIAM Journal on Control and Optimization}, 43(2):643--669, 2004.

\bibitem{Birkhoff67}
Garrett Birkhoff.
\newblock {\em Lattice theory}.
\newblock Third edition. American Mathematical Society Colloquium Publications,
  Vol. XXV. American Mathematical Society, Providence, R.I., 1967.

\bibitem{Bocquet2017}
Marc Bocquet, Karthik~S. Gurumoorthy, Amit Apte, Alberto Carrassi, Colin Grudzien, and Christopher K.~R.~T. Jones.
\newblock Degenerate Kalman filter error covariances and their convergence onto
  the unstable subspace.
\newblock {\em SIAM/ASA Journal on Uncertainty Quantification}, 5(1):304--333,
  2017.

\bibitem{Breiman1992}
Leo Breiman.
\newblock {\em {Probability}}.
\newblock Society for Industrial and Applied Mathematics, 1992.

\bibitem{brin_stuck_2002}
Michael Brin and Garrett Stuck.
\newblock {\em Introduction to Dynamical Systems}.
\newblock Cambridge University Press, 2002.

\bibitem{Brock2017}
Jochen Br\"{o}cker and Gianluigi Del~Magno.
\newblock Asymptotic stability of the optimal filter for random chaotic maps.
\newblock {\em Nonlinearity}, 30(5):1809--1833, 2017.

\bibitem{Capp2005}
Olivier Capp\'{e}, Eric Moulines, and Tobias Ryd\'{e}n.
\newblock {\em Inference in hidden {M}arkov models}.
\newblock Springer Series in Statistics. Springer, New York, 2005.

\bibitem{Chigansky05}
Pavel Chigansky.
\newblock {\em Introduction to nonlinear filtering}.
\newblock Lecture Notes, 2005.

\bibitem{Cornfeld82}
I.~P. Cornfeld, S.~V. Fomin, and Ya.~G. Sina\u{\i}.
\newblock {\em Ergodic theory}, volume 245 of {\em Grundlehren der Mathematischen Wissenschaften}.
\newblock Springer-Verlag, New York, 1982.
\newblock (Translated from the Russian by A. B. Sosinski\u{\i}.)

\bibitem{crisan2018stable}
Dan Crisan, Alberto Lopez-Yela, and Joaquin Miguez.
\newblock Stable approximation schemes for optimal filters,
\newblock preprint,~2018.

\bibitem{Crisan2008}
Kari Heine and Dan Crisan.
\newblock Uniform approximations of discrete-time filters.
\newblock {\em Advances in Applied Probability}, 40(4):979--1001, 2008.

\bibitem{Jaz1970}
{Andrew H.} Jazwinski.
\newblock {\em Stochastic processes and filtering theory}.
\newblock Number~64 in Mathematics in science and engineering. Acad. Press, New
  York, NY [u.a.], 1970.

\bibitem{Kallenberg2002}
Olav Kallenberg.
\newblock {\em Foundations of Modern Probability}.
\newblock Springer-Verlag New York, 2 edition, 2002.

\bibitem{Kal_Bucy1960}
R.~E. Kalman and R.~S. Bucy.
\newblock New results in linear filtering and prediction theory.
\newblock {\em Trans. ASME Ser. D. J. Basic Engrg.}, 83:95--108, 1961.

\bibitem{Katok1995}
Anatole Katok and Boris Hasselblatt.
\newblock {\em Introduction to the modern theory of dynamical systems},
  volume~54 of {\em Encyclopedia of Mathematics and its Applications}.
\newblock Cambridge University Press, Cambridge, 1995.

\bibitem{KUNITA1971}
Hiroshi Kunita.
\newblock Asymptotic behavior of the nonlinear filtering errors of markov
  processes.
\newblock {\em Journal of Multivariate Analysis}, 1(4):365 -- 393, 1971.

\bibitem{LeGland2004}
Fran\c{c}ois Le~Gland and Nadia Oudjane.
\newblock Stability and uniform approximation of nonlinear filters using the
  {H}ilbert metric and application to particle filters.
\newblock {\em Ann. Appl. Probab.}, 14(1):144--187, 2004.

\bibitem{Liverani1995}
Carlangelo Liverani.
\newblock Decay of correlations.
\newblock {\em Ann. of Math. (2)}, 142(2):239--301, 1995.

\bibitem{Mane1987}
Ricardo Ma\~{n}\'{e}.
\newblock {\em Ergodic theory and differentiable dynamics}, volume~8 of {\em
  Ergebnisse der Mathematik und ihrer Grenzgebiete (3)} 
\newblock Springer-Verlag, Berlin, 1987.
\newblock (Translated from the Portuguese by Silvio Levy.)

\bibitem{DelMoral98}
P.~Del Moral.
\newblock A uniform convergence theorem for the numerical solving of the
  nonlinear filtering problem.
\newblock {\em Journal of Applied Probability}, 35(4):873--884, 1998.

\bibitem{DelMoral2001}
Pierre~Del Moral and Alice Guionnet.
\newblock On the stability of interacting processes with applications to
  filtering and genetic??algorithms.
\newblock {\em Annales de l'Institut Henri Poincare (B) Probability and
  Statistics}, 37(2):155 -- 194, 2001.

\bibitem{nussbaum1988}
R.D. Nussbaum and D.~Hilbert.
\newblock {\em Hilbert's Projective Metric and Iterated Nonlinear Maps}.
\newblock Number v. 1 in American Mathematical Society: Memoirs of the American
  Mathematical Society. American Mathematical Society, 1988.

\bibitem{Ocone1996}
Daniel Ocone and Etienne Pardoux.
\newblock Asymptotic stability of the optimal filter with respect to its initial condition.
\newblock {\em SIAM Journal on Control and Optimization}, 34(1):226--243, 1996.

\bibitem{PART}
K.R. Parthasarathy.
\newblock {\em Probability Measures on Metric Spaces}.
\newblock Probability and Mathematical Statistics: A Series of Monographs and
  Textbooks. Academic Press, 1967.

\bibitem{reddy_stability_filter_2019}
Anugu~Sumith Reddy and Amit Apte.
\newblock Stability of non-linear filter for deterministic dynamics.
\newblock arxiv eprint~1910.14348, 2019.

\bibitem{Rudin76}
Walter Rudin.
\newblock {\em Principles of Mathematical analysis, 3rd Ed.}
\newblock McGraw-Hill, New York, NY, USA, 1976.

\bibitem{Rudin87}
Walter Rudin.
\newblock {\em Real and Complex Analysis, 3rd Ed.}
\newblock McGraw-Hill, Inc., New York, NY, USA, 1987.

\bibitem{Sarig2012}
O.~Sarig.
\newblock Introduction to the transfer operator method.
\newblock {\em Second Brazilian School on Dynamical Systems, Lecture Notes},
  2012.

\bibitem{Shub1987}
Michael Shub.
\newblock {\em Global Stability of Dynamical Systems}.
\newblock Springer-Verlag New York, 1987.

\bibitem{tong2014}
Xin~Thomson Tong and Ramon van Handel.
\newblock Conditional ergodicity in infinite dimension.
\newblock {\em Ann. Probab.}, 42(6):2243--2313, 11 2014.

\bibitem{Handel2009}
Ramon van Handel.
\newblock The stability of conditional {M}arkov processes and {M}arkov chains
  in random environments.
\newblock {\em Ann. Probab.}, 37(5):1876--1925, 2009.

\bibitem{Viana1997}
M~Viana.
\newblock {\em Stochastic Dynamics of Deterministic Systems}.
\newblock Instituto de Matem\'{a}tica Pura e Aplicada, 1997.

\bibitem{Viana2004}
M~Viana.
\newblock {\em Stochastic Dynamics of Deterministic Systems}.
\newblock Springer-Verlag, 2004.

\end{thebibliography}

\end{document}